\numberwithin{equation}{section}
\newcommand{\qand}{\quad\text{and}\quad}
\theoremstyle{plain}
\newtheorem{maintheorem}{Theorem}
\newtheorem{maincorollary}[maintheorem]{Corollary}
\newtheorem{theorem}{Theorem}[section]
\newtheorem{proposition}[theorem]{Proposition}
\newtheorem{corollary}[theorem]{Corollary}
\newtheorem{lemma}[theorem]{Lemma}
\theoremstyle{definition}
\newtheorem{remark}[theorem]{Remark}
\newtheorem{definition}{Definition}
\newtheorem{example}{Example}
\renewcommand{\angle}{\sphericalangle}
\newcommand{\RR}{{\mathbb R}}
\newcommand{\TT}{{\mathbb T}}
\newcommand{\vfi}{\varphi}
\newcommand{\la}{\lambda}
\newcommand{\vol}{\operatorname{vol}}
\renewcommand{\epsilon}{\varepsilon}
\newcommand{\dist}{\operatorname{dist}}
\newcommand{\diag}{\operatorname{diag}}
\newcommand{\gen}{\operatorname{span}}
\newcommand{\indi}{\operatorname{ind}}
\newcommand{\sing}{\mathrm{Sing}}
\newcommand{\V}{\EuScript{V}}
\newcommand{\U}{\EuScript{U}}
\newcommand{\G}{\EuScript{G}}
\newcommand{\cC}{\EuScript{C}}
\newcommand{\R}{\EuScript{R}}
\newcommand{\J}{\EuScript{J}}
\newcommand{\W}{\EuScript{W}}
\newcommand{\Mundo}{\mathfrak{X}^{1}(M)}
\begin{document}

\title
{Infinitesimal Lyapunov functions for singular flows}


\thanks{
  V.A. was partially supported by CAPES, CNPq,
  PRONEX-Dyn.Syst. and FAPERJ (Brazil). L.S. was supported
  by a CNPq doctoral scholarship and is presently supported
  by a INCTMat-CAPES post-doctoral scholarship.
}

\subjclass{Primary: 37D30; Secondary: 37D25.}
\renewcommand{\subjclassname}{\textup{2000} Mathematics
  Subject Classification}
\keywords{Dominated splitting,
  partial hyperbolicity, sectional hyperbolicity,
  infinitesimal Lyapunov function.}


\author{Vitor Araujo}
\address[V.A.]{Universidade Federal da Bahia,
Instituto de Matem\'atica\\
Av. Adhemar de Barros, S/N , Ondina,
40170-110 - Salvador-BA-Brazil}
\email{vitor.d.araujo@ufba.br}

\author{Luciana Salgado} \address[L.S.]{ Instituto de
  Matem\'atica Pura e Aplicada - Estrada Dona Castorina,
  110, Jardim Bot\^anico, 22460-320 Rio de Janeiro, Brazil }
\email{lsalgado@impa.br}

\begin{abstract}
  We present an extension of the notion of infinitesimal
  Lyapunov function to singular flows, and from this
  technique we deduce a characterization of partial/sectional
  hyperbolic sets. In absence of singularities, we can also
  characterize uniform hyperbolicity.

These conditions can be expressed using the space
  derivative $DX$ of the vector field $X$ together with a
  field of infinitesimal Lyapunov functions only, and are
  reduced to checking that a certain symmetric operator is
  positive definite at the tangent space of every point of
  the trapping region.
\end{abstract}


\date{\today}

\maketitle

\section{Introduction}
\label{sec:introd-statem-result}

The hyperbolic theory of dynamical systems is now almost a
classical subject in mathematics and one of the main
paradigms in dynamics. Developed in the 1960s and 1970s
after the work of Smale, Sinai, Ruelle, Bowen
\cite{Sm67,Si68,Bo75,BR75}, among many others, this theory
deals with compact invariant sets $\Lambda$ for
diffeomorphisms and flows of closed finite-dimensional
manifolds having a hyperbolic splitting of the tangent
space. That is, $T_\Lambda M= E^s\oplus E^X \oplus E^u$ is a
continuous splitting of the tangent bundle over $\Lambda$,
where $E^X$ is the flow direction, the
subbundles are invariant under the derivative $DX_t$ of the
flow $X_t$
\begin{align*}
  DX_t\cdot E^*_x=E^*_{X_t(x)},\quad  x\in\Lambda, \quad t\in\RR,\quad *=s,X,u;
\end{align*}
$E^s$ is uniformly contracted by $DX_t$ and $E^u$ is
likewise expanded: there are $K,\lambda>0$ so that
\begin{align*}
  \|DX_t\mid E^s_x\|\le K e^{-\lambda t},
  \quad
  \|(DX_t \mid E^u_x)^{-1}\|\le K e^{-\lambda t},
  \quad x\in\Lambda, \quad t\in\RR.
\end{align*}
Very strong properties can be deduced from the existence of
such hyperbolic structure; see for
instance~\cite{Bo75,BR75,Sh87,KH95,robinson2004}.

More recently, extensions of this theory based on weaker
notions of hyperbolicity, like the notions of dominated
splitting, partial hyperbolicity, volume hyperbolicity and
singular hyperbolicity (for three-dimensional flows) have
been developed to encompass larger classes of systems beyond
the uniformly hyperbolic ones; see~\cite{BDV2004} and
specifically~\cite{viana2000i,AraPac2010} for singular
hyperbolicity and Lorenz-like attractors.

One of the technical difficulties in this theory is to
actually prove the existence of a hyperbolic structure, even
in its weaker forms.  We mention that Malkus
showed that the Lorenz equations,
presented in~\cite{Lo63}, are the equations of motion of a
waterwheel, which was built at MIT in the 1970s and helped
to convince the skeptical physicists of the reality of
chaos; see \cite[Section 9.1]{strogatz94}.  Only around the
year 2000 was it established by Tucker in \cite{Tu99,Tu2}
that the Lorenz system of equations, with the parameters
indicated by Lorenz, does indeed have a chaotic strange
attractor.  This proof is a computer assisted proof which
works for a specific choice of parameters, and has not been
improved to this day. More recently, Hunt and Mackay in
\cite{HuntMack03} have shown that the behavior of a certain
physical system, for a specific choice of parameters which
can be fixed in a concrete laboratory setup, is modeled by
an Anosov flow.

The most usual and geometric way to prove hyperbolicity is
to use a field of cones. This idea goes as far back as the
beginning of the hyperbolic theory; see Alekseev
\cite{Aleks68,Aleks68a,Aleks69}. Given a continuous, 
splitting $T_\Lambda M=E\oplus F$ (not necessarily invariant
with respect to a flow $X_t$) of the tangent space over
an invariant subset $\Lambda$, a field of cones of size $a>0$
centered around $F$ is defined by
\begin{align*}
  C_a(x):=\{\vec0\}\cup\{(u,v)\in E_x\times F_x
  : \| u\| \le a\|v\|\}, \quad x\in\Lambda.
\end{align*}
Let us assume that there exists $\lambda\in(0,1)$ such that for
all $x\in\Lambda$ and every negative $t$
\begin{enumerate}
\item $\overline{DX_t\cdot C_a(x)}\subset C_a(X_t(x))$ (the
  overline denotes closure in $T_{X_t(x)}M$);
\item $\|DX_t\cdot w\| \le \lambda \|w\|$ for each $v\in C_a(x)$.
\end{enumerate}
Then there exists an invariant bundle $E^s$ contained in the
cone field $C_a$ over $\Lambda$ whose vectors are uniformly
contracted. The complementary cone field satisfies the
analogous to the first item above for positive $t$. This
ensures the existence of a partially hyperbolic splitting
over $\Lambda$.

We present a simple extension of the notion of infinitesimal
Lyapunov function, from \cite{BurnKatok94}, to singular
flows, and show how this technique provides a new
characterization of partially hyperbolic structures for
invariant sets for flows, and also of singular and sectional
hyperbolicity. In the absence of singularities, we can also
rephrase uniform hyperbolicity with the language of
infinitesimal Lyapunov functions.

This technique is not new. Lewowicz used it in his study of
expansive homeomorphisms~\cite{lewow89} and Wojtkowski
adapted it for the study of Lyapunov exponents in
\cite{Wojtk85}.  Using infinitesimal Lyapunov functions,
Wojtkowski was able to show that the second item above is
superfluous: the geometric condition expressed in the first
item is actually enough to conclude uniform contraction.

Workers using these techniques, like
Lewowicz~\cite{lewow80}, Markarian~\cite{Mrkr88},
Wojtkowski~\cite{Wojtk00a}, Burns-Katok~\cite{BurnKatok94},
have only considered either dynamical systems given by maps
or by flows without singularities. In this last case, the
authors deal with the Linear Poincar\'e Flow on the normal
bundle to the flow direction.

We adapt ideas introduced first by Lewowicz
in~\cite{lewow80}, and developed by several other authors in
different contexts, to the setting of vector bundle
automorphisms over flows with singularities; see
also~\cite{Wojtk01,Wojtk00,Mrkr88} for other known
applications of this technique to billiards and symplectic
flows, and also~\cite{Pota79,Pota60} for a general theory of
$\J$-monotonous linear transformations. Recently in
\cite{BaDrVal12} a general framework was established
relating Lyapunov exponents for invariant measures of
maps and flows with eventually strict Lyapunov functions.

We improve on these results by showing, roughly, that the
condition on item (1) above on a trapping region for a flow
implies partial hyperbolicity, even when singularities are
present. This also provides a way to define uniform
hyperbolicity on a compact invariant set for a smooth flow
generated by a vector field $X$, using only $X$ and $DX$
together with a family of Lyapunov functions.

We then provide an extra necessary and sufficient condition
ensuring that the complementary cone, containing invariant
subbundle $E^c$, which contains the flow direction, is such
that the area form along any two-dimensional subspace of
$E^c$ is uniformly expanded by the action of the tangent
cocycle $DX_t$ of the flow $X_t$ (this property is today
known as sectional-hyperbolicity; see \cite{MeMor06}).

Moreover, these conditions can be expressed using the vector
field $X$ and its space derivative $DX$ together with an
infinitesimal Lyapunov function, and are reduced to checking
that a certain symmetric operator is positive definite on
all points of the trapping region. While we usually define
hyperbolicity by using the differential $Df$ of a
diffeomorphism $f$, or the cocycle $(DX_t)_{t\in\RR}$
associated to the continuous one-parameter group
$(X_t)_{t\in\RR}$, we show how to express partial
hyperbolicity using only the interplay between the
infinitesimal generator $X$ of the group $X_t$, its
derivative $DX$ and the infinitesimal Lyapunov function.
Since in many situations dealing with mathematical models
from the physical, engineering or social sciences, it is the
vector field that is given and not the flow, we expect that
the theory here presented to be useful to develop simpler
algorithms to check hyperbolicity.

\subsection{Preliminary definitions}
\label{sec:prelim-definit}

Before the main statements we collect some definitions in
order to state the main results.

Let $M$ be a connected compact finite $n$-dimensional
manifold, $n \geq 3$, with or without boundary, together
with a flow $X_t : M \to M, t \in \mathbb{R}$ generated by a
$C^1$ vector field $X: M \to TM$, such that $X$ is inwardly
transverse to the boundary $\partial M$, if $\partial
M\neq\emptyset$.

An \emph{invariant set} $\Lambda$ for the flow of $X$ is a
subset of $M$ which satisfies $X_t(\Lambda)=\Lambda$ for all
$t\in\RR$.  The \emph{maximal invariant set of the flow} is
$M(X):= \cap_{t \geq 0} X_t(M)$, which is clearly a compact
invariant set.

A \emph{trapping region} $U$ for a flow $X_t$ is an
open subset of the manifold $M$ which satisfies: $X_t(U)$ is
contained in $U$ for all $t>0$; and there exists $T>0$ such
that $\overline{X_t(U)} $ is contained in the interior of
$U$ for all $t>T$.

A \emph{singularity} for the vector field $X$
is a point $\sigma\in M$ such that $X(\sigma)=\vec0$ or,
equivalently, $X_t(\sigma)=\sigma$ for all $t \in \RR$. The
set formed by singularities is the \emph{singular set of
  $X$} denoted $\sing(X)$.  We say that a \emph{singularity is
  hyperbolic} if the eigenvalues of the derivative
$DX(\sigma)$ of the vector field at the singularity $\sigma$
have nonzero real part.
\begin{definition}\label{def1}
  A \emph{dominated splitting} over a compact invariant set $\Lambda$ of $X$
  is a continuous $DX_t$-invariant splitting $T_{\Lambda}M =
  E \oplus F$ with $E_x \neq \{0\}$, $F_x \neq \{0\}$ for
  every $x \in \Lambda$ and such that there are positive
  constants $K, \lambda$ satisfying
  \begin{align}\label{eq:def-dom-split}
    \|DX_t|_{E_x}\|\cdot\|DX_{-t}|_{F_{X_t(x)}}\|<Ke^{-\la
      t}, \ \textrm{for all} \ x \in \Lambda, \ \textrm{and
      all} \,\,t> 0.
  \end{align}
\end{definition}

A compact invariant set $\Lambda$ is said to be
\emph{partially hyperbolic} if it exhibits a dominated
splitting $T_{\Lambda}M = E \oplus F$ such that subbundle
$E$ is uniformly contracted. In this case $F$ is the
\emph{central subbundle} of $\Lambda$.

A compact invariant set $\Lambda$ is said to be
\emph{singular hyperbolic} if it is partially hyperbolic and
the action of the tangent cocycle expands volume along the
central subbundle, i.e.,
\begin{align}\label{eq:def-vol-exp}
      \vert \det (DX_t\vert_{F_x}) \vert > C e^{\la t},
      \forall t>0, \ \forall \ x \in \Lambda.
    \end{align}

\begin{definition}\label{def:sec-exp}
  We say that a $DX_t$-invariant subbundle $F \subset
  T_{\Lambda}M$ is a \emph{sectionally expanding} subbundle
  if $\dim F_x \geq 2$ is constant for $x\in\Lambda$
  and there are positive constants $C , \lambda$ such that for every $x
    \in \Lambda$ and every two-dimensional linear subspace
    $L_x \subset F_x$ one has
    \begin{align}\label{eq:def-sec-exp}
      \vert \det (DX_t \vert_{L_x})\vert > C e^{\la t},
      \forall t>0.
    \end{align}
  \end{definition}

And, finally, the definition of sectional-hyperbolicity.

\begin{definition}\label{sechypset} \cite[Definition
  2.7]{MeMor06} A \emph{sectional hyperbolic set} is a
  partially hyperbolic set whose singularities are
  hyperbolic and the central subbundle is sectionally
  expanding.
\end{definition}

We note that a sectional hyperbolic set always is
singular hyperbolic, however the reverse is only true
in dimension three, not in higher dimensions; see for
instance Metzger-Morales \cite{MeMor06} and Zu-Gan-Wen
\cite{ZGW08}.

\begin{remark}
  \label{rmk:sec-exp-discrete}
  The properties of sectional hyperbolicity can be expressed in the following
  equivalent way; see \cite{AraPac2010}.  There exists
  $T>0$ such that
  \begin{itemize}
  \item $\|DX_T\vert_{E_x}\|<\frac12$ for all $x\in\Lambda$
    (uniform contraction); and
  \item $|\det (DX_T\vert_{ F_x})|> 2$ for all $x\in\Lambda$
    and each $2$-subspace $F_x$ of $E^c_x$ ($2$-sectional expansion).
  \end{itemize}
\end{remark}

We say that a compact invariant set $\Lambda$ is a
\emph{volume hyperbolic} set if it has a dominated splitting
$E\oplus F$ such that the volume along its subbundles is
uniformly contracted (along $E$) and expanded (along $F$) by
the action of the tangent cocyle. If the whole manifold $M$
is a volume hyperbolic set for a flow $X_t$, then we say
that $X_t$ is a volume-hyperbolic flow.

We recall that a flow $X_t$ is said to be \emph{Anosov} if
the whole manifold $M$ is a hyperbolic set for the flow.
Based on this definition, we say that $X_t$ is a
\emph{sectional Anosov flow} if the maximal invariant set
$M(X)$ is a sectional-hyperbolic set for the flow.

From now on, we consider $M$ a connected compact finite
dimensional riemannian manifold, $U \subset M$ a trapping
region, $\Lambda(U)=\Lambda_X(U):= \cap_{t>0}\overline
{X_t(U)}$ the maximal positive invariant subset in $U$ for
the vector field $X$ and $E_U$ a finite dimensional vector
bundle over $U$.  We also assume that all singularities of
$X$ in $U$ (if they exist) are hyperbolic.

\subsubsection{Fields of quadratic forms; positive and
  negative cones}
\label{sec:fields-quadrat-forms}

Let $E_U$ be a finite dimensional vector bundle with base
$U$ and $\J:E_U\to\RR$ be a continuous field of quadratic
forms $\J_x:E_x\to\RR$ which are non-degenerate and have
index $0<q<\dim(E)=n$.  The index $q$ of $\J$ means that the
maximal dimension of subspaces of non-positive vectors is
$q$.

We also assume that $(\J_x)_{x\in U}$ is continuously
differentiable along the flow.  The continuity assumption on
$\J$ means that for every continuous section $Z$ of $E_U$
the map $U\ni x\mapsto \J(Z(x))\in\RR$ is
continuous. The $C^1$ assumption on $\J$ along the flow
means that the map $\RR\ni t\mapsto \J_{X_t(x)} (Z(X_t(x)))\in \RR$
is continuously differentiable for all $x\in U$ and each
$C^1$ section $Z$ of $E_U$.

We let $\cC_\pm=\{C_\pm(x)\}_{x\in U}$ be the field of
positive and negative cones associated to $\J$
\begin{align*}
  C_\pm(x):=\{0\}\cup\{v\in E_x: \pm\J_x(v)>0\}  \quad x\in U
\end{align*}
and also let $\cC_0=\{C_0(x)\}_{x\in U}$ be the corresponding
field of zero vectors $C_0(x)=\J_x^{-1}(\{0\})$ for all
$x\in U$.

\subsubsection{Linear multiplicative cocycles over flows}
\label{sec:linear-multipl-cocyc}

Let $A:E\times\RR\to E$ be a smooth map given by a collection of linear
bijections
\begin{align*}
  A_t(x): E_x\to E_{X_t(x)}, \quad x\in M, t\in\RR,
\end{align*}
where $M$ is the base space of the finite dimensional vector
bundle $E$, satisfying the cocycle property
\begin{align*}
  A_0(x)=Id, \quad A_{t+s}(x)=A_t(X_s(x))\circ A_s(x), \quad x\in M, t,s\in\RR,
\end{align*}
with $\{X_t\}_{t\in\RR}$ a smooth flow over $M$.  We note
that for each fixed $t>0$ the map $A_t: E\to E, v_x\in E_x
\mapsto A_t(x)\cdot v_x\in E_{X_t(x)}$ is an automorphism of
the vector bundle $E$.

The natural example of a linear multiplicative cocycle over
a smooth flow $X_t$ on a manifold is the derivative cocycle
$A_t(x)=DX_t(x)$ on the tangent bundle $TM$ of a finite
dimensional compact manifold $M$.

The following definitions are fundamental to state our
results.

\begin{definition}
\label{def:J-separated}
Given a continuous field of non-degenerate quadratic forms
$\J$ with constant index on the trapping region $U$ for the
flow $X_t$, we say that the cocycle $A_t(x)$ over $X$ is
\begin{itemize}
\item $\J$-\emph{separated} if $A_t(x)(C_+(x))\subset
  C_+(X_t(x))$, for all $t>0$ and $x\in U$;
\item \emph{strictly $\J$-separated} if $A_t(x)(C_+(x)\cup
  C_0(x))\subset C_+(X_t(x))$, for all $t>0$ and $x\in U$;
\item $\J$-\emph{monotone} if $\J_{X_t(x)}(A_t(x)v)\ge \J_x(v)$, for each $v\in
  T_xM\setminus\{\vec0\}$ and $t>0$;
\item \emph{strictly $\J$-monotone} if $\partial_t\big(\J_{X_t(x)}(A_t(x)v)\big)\mid_{t=0}>0$,
  for all $v\in T_xM\setminus\{0\}$, $t>0$ and $x\in U$;
\item $\J$-\emph{isometry} if $\J_{X_t(x)}(A_t(x)v) = \J_x(v)$, for each $v\in T_xM$ and $x\in U$.
\end{itemize}
\end{definition}
Thus, $\J$-separation corresponds to simple cone invariance
and strict $\J$-separation corresponds to strict cone
invariance under the action of $A_t(x)$.

We say that the flow $X_t$ is (strictly)
$\J$-\emph{separated} on $U$ if $DX_t(x)$ is (strictly)
$\J$-\emph{separated} on $T_UM$.
Analogously, the flow of $X$ on $U$ is (strictly) $\J$-\emph{monotone} if
$DX_t(x)$ is (strictly) $\J$-\emph{monotone}.

\begin{remark}\label{rmk:J-separated-C-}
  If a flow is strictly $\J$-separated, then for $v\in T_xM$
  such that $\J_x(v)\le0$ we have
  $\J_{X_{-t}(x)}(DX_{-t}(v))<0$ for all $t>0$ and $x$ such
  that $X_{-s}(x)\in U$ for every $s\in[-t,0]$. Indeed,
  otherwise $\J_{X_{-t}(x)}(DX_{-t}(v))\ge0$ would imply
  $\J_x(v)=\J_x\big(DX_t(DX_{-t}(v))\big)>0$, contradicting
  the assumption that $v$ was a non-positive vector.

  This means that a flow $X_t$ is strictly
    $\J$-separated if, and only if, its time reversal
    $X_{-t}$ is strictly $(-\J)$-separated. 
\end{remark}

A vector field $X$ is $\J$-\emph{non-negative} on $U$ if
$\J(X(x))\ge0$ for all $x\in U$, and
$\J$-\emph{non-positive} on $U$ if $\J(X(x))\leq 0$ for all
$x\in U$. When the quadratic form used in the context is
clear, we will simply say that $X$ is non-negative or
non-positive.

\subsection{Statement of the results}
\label{sec:j-separat-flows}

We say that a compact invariant subset $\Lambda$ is
\emph{non-trivial} if
\begin{itemize}
\item either $\Lambda$ does not contain singularities;
\item or $\Lambda$ contains at most finitely
many singularities, $\Lambda$ contains some
regular orbit and is connected.
\end{itemize}

Our main result is the following.

\begin{maintheorem}
  \label{mthm:Jseparated-parthyp}
  A non-trivial attracting set $\Lambda$ of a trapping
  region $U$ is partially hyperbolic for a flow $X_t$ if,
  and only if, there is a $C^1$ field of non-degenerate
  quadratic forms $\J$ with constant index, equal to the
  dimension of the stable subspace of $\Lambda$, such that
  $X_t$ is non-negative strictly $\J$-separated on $U$.
\end{maintheorem}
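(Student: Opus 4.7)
The plan is to prove both implications by translating strict $\J$-separation into an infinitesimal/pointwise condition (positivity of a symmetric operator built from $\J$ and $DX$) and then using invariant cone techniques, carefully adapted for the singular setting via the non-negativity hypothesis $\J(X)\geq 0$.

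For the sufficiency direction, assuming $X_t$ is non-negative and strictly $\J$-separated on $U$, I would first construct the candidate splitting over $\Lambda(U)$ by intersecting iterated cones: set
\begin{align*}
E_x := \bigcap_{t\geq 0}DX_{-t}\bigl(C_-(X_t(x))\cup\{\vec 0\}\bigr),
\qquad
F_x := \bigcap_{t\geq 0}DX_{t}\bigl(C_+(X_{-t}(x))\cup\{\vec 0\}\bigr),
\end{align*}
where for $F$ I use that Remark \ref{rmk:J-separated-C-} yields strict $(-\J)$-separation for the time-reversed flow, so $C_-$-cones are strictly invariant under $DX_{-t}$. Strict $\J$-separation implies that these families of cones are nested and shrink onto genuine linear subspaces; the index assumption forces $\dim E_x=q$ and $\dim F_x=n-q$, and these subbundles are automatically $DX_t$-invariant and continuous. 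Domination follows almost tautologically from strict cone invariance: a vector in $F$ is pushed into the interior of $C_+$ in finite time, so its angle with $E$ stays uniformly bounded below along the orbit.

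The heart of the matter is then upgrading strict cone invariance to uniform exponential contraction on $E$. For this, I would use the infinitesimal characterization alluded to in the introduction: strict $\J$-separation is equivalent to the symmetric operator $\widetilde{J}_x$ (whose quadratic form is $\partial_t\J_{X_t(x)}(DX_t(x)v)\vert_{t=0}$) being positive definite, uniformly by compactness of $\overline{U}$. For $v\in E_x$ one has $\J_x(v)\leq 0$, so $\J_{X_t(x)}(DX_tv)$ grows (towards $0$) at a uniform linear rate measured against $\|DX_tv\|^2$. Comparing $|\J|$ with the Riemannian norm, this translates into an exponential decay of $\|DX_tv\|$. The non-negativity of $X$ is essential here because the flow direction has $\J(X)\geq 0$ and therefore lies outside the negative cone, so the infinitesimal estimates transfer cleanly to $E$ without interference from the flow line even at singularities; the non-triviality hypothesis on $\Lambda$ is what lets us propagate the uniform rate across the (finitely many) singularities using connectedness of $\Lambda$.

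For the necessity direction, given a partially hyperbolic splitting $T_\Lambda M=E\oplus F$ with $\dim E=q$, I would extend $E\oplus F$ to a continuous (not necessarily invariant) splitting on a neighborhood $U'\subseteq U$, choose an adapted Riemannian metric in which $\|DX_T|_E\|<1/2$ and domination holds for some $T>0$, and define
\begin{align*}
\J_x(v_E+v_F):=-\|v_E\|^2+\|v_F\|^2.
\end{align*}
This $\J$ is $C^1$ along the flow (it is smooth in the extended metric), has constant index $q$, and satisfies $\J(X(x))=\|X(x)\|^2\geq 0$ since on $\Lambda$ the flow direction lies in $F$ and continuity handles $U'$. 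Uniform contraction on $E$ and domination yield $\J_{X_T(x)}(DX_T v)>\J_x(v)$ strictly for $v\neq \vec 0$ with $\J_x(v)\geq 0$; to upgrade this eventual strict monotonicity to strict $\J$-separation at every positive time (as required by Definition \ref{def:J-separated}), I would replace $\J$ by the time-averaged form
\begin{align*}
\widehat\J_x(v):=\int_0^T \J_{X_s(x)}(DX_s(x)v)\,ds,
\end{align*}
which still has the correct index and inherits non-negativity along $X$, and for which strict $\J$-separation at time $T$ forces $\widetilde{\widehat J}$ to be positive definite on $U'$ by a standard averaging computation, giving strict $\widehat\J$-separation for all $t>0$.

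The main obstacle is the sufficiency argument at the singularities: extracting a uniform exponential contraction rate on $E$ from pointwise strict positivity of $\widetilde J$ when $DX_t$ can have arbitrarily slow orbits near $\sing(X)$. This is precisely where the combination of the non-triviality hypothesis, the non-negativity of $X$ with respect to $\J$ (which forbids the flow direction from polluting $E$), and the compactness of $\overline{U}$ must be leveraged; the corresponding step in the non-singular Wojtkowski theory is straightforward, but for singular flows it requires the geometric input that $\J(X)\geq 0$ to ensure the stable cone stays transverse to $X$ even as one approaches a singularity.
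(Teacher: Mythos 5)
There is a genuine gap in your sufficiency argument, at exactly the step you yourself call the heart of the matter. You assert that strict $\J$-separation is equivalent to positive definiteness of the symmetric operator $\widetilde J_x$, and you deduce uniform contraction on $E$ from the resulting inequality $\partial_t\J(DX_tv)\gtrsim\|DX_tv\|^2$. That equivalence is false: by Theorem~\ref{thm:J-separated-tildeJ}(2)--(3), strict $\J$-separation is equivalent to $\widetilde J_x>\delta(x)\,J$ for some real function $\delta$ which may be negative, while positive definiteness of $\widetilde J_x$ corresponds to strict $\J$-monotonicity and hence to uniform \emph{hyperbolicity} of the splitting (Theorem~\ref{thm:char-dom-split}(3)); Example~\ref{ex:Lorenz-like-J} (the index-$1$ case) is a strictly $\J$-separated flow whose $\widetilde J$ is indefinite. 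Consequently your mechanism for extracting exponential decay on $E$ has no valid starting point, and it cannot be repaired within the infinitesimal framework alone: strict separation only gives domination, and whether $F_-$ is contracted or $F_+$ expanded is governed by the asymptotics of $\int_0^t\delta(X_s(x))\,ds$ (Theorem~\ref{thm:char-dom-split}(1)--(2)). You acknowledge the singular case as the ``main obstacle'' but do not resolve it, and your informal appeal to non-negativity (``the flow direction does not pollute $E$'') is not the mechanism that closes it.

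What is missing is the paper's Lemma~\ref{le:flow-center}, which is where the hypotheses $\J(X)\ge0$ and non-triviality actually act. Non-negativity forces the invariant flow direction into the positive bundle $F_+$ (an invariant line must sit inside one of the two cone-trapped bundles), and then domination \emph{against the flow direction} yields contraction for free: since $DX_t\,X(x)=X(X_t(x))$ has norm uniformly bounded on the compact set $\Lambda$, the domination inequality gives $\|DX_t|_{F_-(x)}\|\le CKe^{-\lambda t}$ at every regular point; at the finitely many singularities, non-triviality (connectedness plus a regular orbit) provides regular points of $\Lambda$ accumulating on each singularity, and continuity of $DX_T$ transfers the uniform estimate. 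Your remaining steps are essentially the paper's: the cone-intersection construction of $F_\pm$ and domination is Theorem~\ref{thm:strict-J-separated-cocycle} (though ``angle bounded below'' is transversality, not domination; the paper uses Wojtkowski's projective contraction plus Lemma~\ref{le:kuhne}), and your necessity construction via an adapted metric and $\J(v)=\|v_F\|^2-\|v_E\|^2$ is Theorem~\ref{thm:necessity}, where the paper obtains strict separation for all $t>0$ directly from Gourmelon's adapted metric (Theorem~\ref{thm:phyp-adapted-metric}) rather than by your time-averaging, whose justification again wrongly aims at positive definiteness of $\widetilde{\widehat J}$.
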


This is a direct consequence of a corresponding result for
linear multiplicative cocycles over vector bundles which we
state in Theorem~\ref{thm:strict-J-separated-cocycle} and
prove in Section~\ref{sec:j-separat-cocycl}.

We obtain a criterion for partial and uniform hyperbolicity
which extends the one given by Lewowicz, in \cite{lewow80},
and Wojtkowski, in \cite{Wojtk01}. The condition of strict
$\J$-separation can be expressed only using the vector field
$X$ and its spatial derivative $DX$, as follows.

\begin{proposition}
  \label{pr:J-separated}
  A $\J$-non-negative vector field $X$ on $U$ is (strictly)
  $\J$-separated if, and only if, there exists a compatible
  field of forms $\J_0$ and there exists a function
  $\delta:U\to\RR$ such that the operator $\tilde J_{0,x}:=
  J_0\cdot DX(x)+DX(x)^*\cdot J_0$ satisfies
  \begin{align*}
    \tilde J_{0,x}-\delta(x)J_0 \quad\text{is positive
      (definite) semidefinite}, \quad x\in U,
  \end{align*}
  where $DX(x)^*$ is the adjoint of $DX(x)$ with respect to
  the adapted inner product.
\end{proposition}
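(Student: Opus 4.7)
My plan is to translate cone invariance into a pointwise algebraic condition on $DX(x)$ by differentiating $t\mapsto \J_{0,X_t(x)}(DX_t(x)v)$ along orbits. Let $J_{0,x}$ denote the symmetric operator representing $\J_{0,x}$ with respect to the adapted inner product. Using the variational equation $\partial_{t}|_{t=0}DX_{t}(x)=DX(x)$ and the chain rule, a direct calculation gives
\begin{align*}
\frac{d}{dt}\Big|_{t=0}\J_{0,X_{t}(x)}\bigl(DX_{t}(x)v\bigr)
=\bigl\langle\bigl(\mathcal L_X J_0+J_0\,DX(x)+DX(x)^{*}J_0\bigr)v,v\bigr\rangle
=\bigl\langle(\mathcal L_X J_0+\tilde J_{0,x})v,v\bigr\rangle,
\end{align*}
where $\mathcal L_X J_0$ is the Lie derivative of $J_0$ along the flow. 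The notion of a \emph{compatible} field of forms gives me the flexibility to modify $\J_0$ within its equivalence class (same positive and negative cones, hence the same separation properties) in order to absorb the Lie-derivative term into the scalar function $\delta$: concretely, I rescale $\J_0$ along each orbit so that $\mathcal L_X J_0 = c(x)\,J_0$, and then the scalar $c$ is absorbed into $\delta$.

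For the sufficiency direction, assume $\tilde J_{0,x}-\delta(x)J_0\ge 0$ on $U$. Applying the computation above at an arbitrary $t$ rather than at $t=0$ yields, for $g(t):=\J_{0,X_t(x)}(DX_t(x)v)$, the differential inequality $g'(t)\ge\delta(X_t(x))\,g(t)$ along the orbit of $x$. A Gronwall argument then gives $g(t)\ge e^{\int_{0}^{t}\delta(X_s(x))\,ds}g(0)$, so $g(0)\ge 0$ forces $g(t)\ge 0$ for every $t\ge 0$, which is exactly $\J$-separation. If the inequality is strict (positive definite), a standard refinement of the Gronwall estimate upgrades this to $g(t)>0$ for every $v\ne 0$ with $g(0)\ge 0$, delivering strict $\J$-separation.

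For the necessity direction, $\J$-separation means that whenever $v$ belongs to the null cone $C_0(x)$ of $J_0$ one has $g(0)=0$ and $g(t)\ge 0$ for $t\ge 0$, hence $g'(0)\ge 0$; equivalently, the quadratic form $\langle(\mathcal L_X J_0+\tilde J_{0,x})v,v\rangle$ is non-negative on $C_0(x)$. A Finsler-type lemma on pairs of real quadratic forms (one of them non-degenerate with prescribed index) produces a scalar $\delta(x)\in\RR$ for which $\mathcal L_X J_0+\tilde J_{0,x}-\delta(x)J_0\ge 0$ globally; after choosing a compatible $\J_0$ making $\mathcal L_X J_0$ proportional to $J_0$, this reduces to the required $\tilde J_{0,x}-\delta(x)J_0\ge 0$, and the strict case is analogous. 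The main obstacle I expect is this Finsler-type step together with verifying that $\delta(x)$ may be taken sufficiently regular across $U$ so that the Gronwall argument in the sufficiency direction applies uniformly; the hypothesis that $X$ is $\J$-non-negative enters precisely to control the intrinsically $DX_t$-invariant flow direction, which would otherwise conflict with the strict-separation conclusion.
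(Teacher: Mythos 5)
Your overall mechanism (differentiate $t\mapsto\J(DX_t(x)v)$ along the orbit, use a two-quadratic-forms lemma on the zero cone for necessity, and a Gronwall estimate for sufficiency) is the paper's mechanism, but two steps you lean on do not hold as stated. First, the absorption of the Lie-derivative term: rescaling $\J_0$ by a scalar along each orbit replaces $\mathcal L_X J_0$ by $\mathcal L_X J_0+(X\phi)J_0$, so it can only shift $\mathcal L_X J_0$ by a multiple of $J_0$; it cannot make a general symmetric-operator-valued Lie derivative proportional to $J_0$. The paper avoids this term altogether by passing to the adapted inner product and adapted coordinates in which the matrix $J$ is constant, at the price of rewriting the vector field/cocycle in those coordinates (this is exactly the point of Example~\ref{ex:suspension} and Example~\ref{ex:J-separated}: without the coordinate change the criterion with $\tilde J_x=J\,DX(x)+DX(x)^*J$ is simply false for a constant vector field). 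Your formula is the honest computation in arbitrary coordinates, but the reduction to the stated operator $\tilde J_{0,x}$ is not achieved by the rescaling you propose.

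Second, and more seriously, the strict case of the necessity direction is not ``analogous''. Strict $\J$-separation gives, for $v\in C_0(x)\setminus\{\vec0\}$, that $g(t)=\J(DX_t(x)v)>0$ for $t>0$ with $g(0)=0$, hence only $g'(0)\ge0$; you cannot conclude $g'(0)>0$ for the original $\J$, so the strict Finsler-type step does not apply directly. This is precisely why the proposition asserts the existence of a \emph{compatible} field $\J_0$: in the paper this direction is Proposition~\ref{pr:strict-J-sep}, whose proof first shows that strict $\J$-separation yields a dominated splitting $F_-\oplus F_+$ (Theorem~\ref{thm:strict-J-separated-cocycle}, via the cone-contraction Theorem~\ref{thm:proj-contr}), then uses the adapted metric of Theorem~\ref{thm:phyp-adapted-metric} to build $\J_0(v)=|v^+|^2-|v^-|^2$ and verifies $\tilde\J_{0,x}(v_0)\ge 2\lambda>0$ on the zero cone before invoking Lemma~\ref{le:kuhne}. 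Your proposal is missing this entire construction, which is the actual content of the strict converse; the non-strict converse and the sufficiency direction (including the strict refinement of the Gronwall step) do match the paper, and the regularity of $\delta$ that you flag is handled there by the two-sided bound in item (6) of Theorem~\ref{thm:J-separated-tildeJ}.
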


In the statement above, we say that a field of
quadratic forms $\J_0$ on $U$ is \emph{compatible} to $\J$, and we
write $\J\sim\J_0$, if there exists $C>1$ satisfying for
$x\in\Lambda$
\begin{align*}
  \frac1C\cdot\J_0(v)\le \J(v) \le C\cdot\J_0(v),
\quad v\in E_x\cup F_x,
\end{align*}
where $E\oplus F$ is a $DX_t$-invariant splitting of
$T_\Lambda M$.

Again this is a consequence of a corresponding result for
linear multiplicative cocycles where $DX$ is replaced by the
infinitesimal generator
\begin{align*}
  D(x):=\lim_{t\to0}\frac{A_t(x)-Id}t
\end{align*}
of the cocycle $A_t(x)$.

As a consequence of Theorem \ref{mthm:Jseparated-parthyp}, we
characterize hyperbolic maximal invariant subsets in
trapping regions without singularities as follows.
We recall that the index of a (partially) hyperbolic set is
the dimension of the uniformly contracted subbundle of its
tangent bundle.

\begin{maincorollary}\label{mcor:J-separation-hyperbolicity}
  The maximal invariant subset $\Lambda$ of $U$ is a
  hyperbolic set for $X$ of index $s$ if, and only if, there
  exist $\J,\G$ smooth fields of non-degenerate quadratic
  forms on $U$ with constant index $s$ and $n-s-1$,
  respectively, where $s<n-2$ and $n = dim(M)$, such that
  $X_t$ is strictly $\J$-separated non-negative on $U$ with
  respect to $\J$, $X_t$ is strictly $\G$-separated
  non-positive with respect to $\G$, and there are no
  singularities of $X$ in $U$.
\end{maincorollary}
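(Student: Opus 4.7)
The plan is to deduce the corollary from Theorem~\ref{mthm:Jseparated-parthyp} and from the companion characterization of sectional hyperbolicity by a pair of fields of quadratic forms (advertised in the introduction and proved earlier in the paper), combined with the elementary fact that in the absence of singularities a sectional-hyperbolic set is automatically uniformly hyperbolic.

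For the direct implication I will start from the hyperbolic splitting $T_\Lambda M=E^s\oplus E^X\oplus E^u$ of dimensions $s$, $1$, $n-s-1$. Hyperbolicity is in particular partial hyperbolicity with stable bundle of dimension $s$, so Theorem~\ref{mthm:Jseparated-parthyp} supplies the field $\J$ of index $s$ on a trapping region $U$, with $X_t$ non-negative and strictly $\J$-separated on $U$. Uniform expansion on $E^u$ (of dimension $n-s-1\ge 2$ thanks to $s<n-2$) forces $2$-sectional expansion on the central bundle $E^X\oplus E^u$; feeding this into the sectional-hyperbolicity criterion then provides the second field $\G$ of index $n-s-1$ with $X_t$ non-positive and strictly $\G$-separated on $U$. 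Absence of singularities in $U$ is part of the hyperbolicity hypothesis.

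For the converse I assume the $\J,\G$ data on $U$ together with absence of singularities. Theorem~\ref{mthm:Jseparated-parthyp} applied to $\J$ gives the partially hyperbolic splitting $T_\Lambda M=E^s\oplus F$, with $\dim E^s=s$, $\dim F=n-s$ and $E^X\subset F$. The sectional-hyperbolicity criterion applied to the pair $(\J,\G)$ yields that $F$ is $2$-sectionally expanding. It remains to upgrade sectional to uniform hyperbolicity. Since $X$ never vanishes on $\Lambda$, the function $\|X(\cdot)\|$ is bounded between two positive constants on the compact set $\Lambda$, so the flow subbundle $E^X\subset F$ has uniformly bounded distortion under $DX_t$. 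For any $v\in F_x$ transverse to $E^X_x$, the $2$-plane $L=\gen\{X(x),v\}\subset F_x$ satisfies $\vert\det(DX_t\vert_L)\vert\ge Ce^{\la t}$ by $2$-sectional expansion; expanding this determinant in a basis adapted to $X$ and using the uniform bounds on $\|X\|$, the component of $DX_t(v)$ transverse to $E^X$ must grow like $e^{\la t}$. This is uniform expansion of the induced linear Poincar\'e flow on the normal bundle $F/E^X$, and via a standard cone-field argument for uniformly expanding cocycles on a compact invariant set it produces a $DX_t$-invariant subbundle $E^u\subset F$ of dimension $n-s-1$ uniformly expanded by $DX_t$. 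Together with $E^s$ and $E^X$ this is the hyperbolic splitting.

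The step I expect to be the main obstacle is the extraction of the $DX_t$-invariant uniformly expanded subbundle $E^u$ of $F$ of codimension one (inside $F$) from the purely sectional statement. The no-singularity hypothesis is essential here, since it supplies the uniform lower bound on $\|X\|$ that lets one factor out the flow direction $E^X$ and translate $2$-sectional expansion on $F$ into uniform $1$-dimensional expansion of the linear Poincar\'e flow on the quotient $F/E^X$.
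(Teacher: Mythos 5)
There is a genuine gap: both halves of your argument are routed through the sectional-hyperbolicity criterion of Theorem~\ref{mthm:2-sec-exp-J-monot}, but that theorem neither accepts nor produces the data appearing in Corollary~\ref{mcor:J-separation-hyperbolicity}. Its hypotheses and conclusion involve one field $\J$ together with strict $\J_0$-monotonicity of the linear Poincar\'e flow on nonsingular compact invariant subsets (and the extra assumption $\Lambda\subset\Omega(X)$); it does not output a second field $\G$ with $X_t$ non-positive and strictly $\G$-separated, nor can you feed the pair $(\J,\G)$ into it, since strict $\G$-separation is not among its hypotheses. So in your converse direction the sentence ``the sectional-hyperbolicity criterion applied to the pair $(\J,\G)$ yields that $F$ is $2$-sectionally expanding'' is unsupported, and the subsequent upgrade from sectional expansion to a $DX_t$-invariant uniformly expanded subbundle $E^u\subset F$ --- which you yourself flag as the main obstacle --- is only sketched; it is essentially the nontrivial content of Theorem~\ref{thm:2secexp-LPFhyp} and is not needed for this corollary at all. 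The same problem affects your direct implication: Theorem~\ref{mthm:2-sec-exp-J-monot} cannot ``provide the second field $\G$'' of the required type.

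What the hypothesis on $\G$ is actually for is time reversal. By Remark~\ref{rmk:strictly-J-separated}, $X_t$ strictly $\G$-separated and $\G$-non-positive means that $X_{-t}$ is non-negative and strictly $(-\G)$-separated; applying Theorem~\ref{mthm:Jseparated-parthyp} (equivalently Theorem~\ref{thm:strict-J-separated-cocycle} together with Lemma~\ref{le:flow-center}) to the reversed flow produces a second partially hyperbolic splitting $T_\Lambda M=E^{cs}\oplus E^{u}$ in which $E^{u}$ is uniformly expanded by $DX_t$ and $E^X\subset E^{cs}$. Combining this with the splitting $E^s\oplus E^{cu}$ obtained from $\J$, and using uniqueness of dominated splittings with matching dimensions, gives $E^{cs}=E^s\oplus E^X$ and $E^{cu}=E^X\oplus E^u$, hence a hyperbolic splitting --- no sectional expansion, linear Poincar\'e flow, or cone-field extraction is required, and the absence of singularities enters only through Lemma~\ref{le:flow-center} and the fact that $E^X$ is a genuine line bundle. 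Likewise, in the direct implication the field $\G$ should come from the necessity statement (Theorem~\ref{thm:necessity}) applied to the reversed flow, whose stable bundle is $E^u$, followed by negation of the resulting form (with the index bookkeeping of the negated form handled carefully), not from the sectional-hyperbolicity theorem.
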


\subsubsection{Incompressible vector fields}
To state the next result, we recall that a vector field is
said to be \emph{incompressible} if its flow has null
divergence, i.e., it is volume-preserving on $M$.

In this particular case, we have the following easy
corollary of Theorem~\ref{mthm:Jseparated-parthyp}, since a
partially hyperbolic flow in a compact
manifold 
must expand volume along the central direction. Moreover, if
the stable direction has codimension $2$, the central
direction expands area.
\begin{maincorollary}
  \label{mcor:incompress-anosov}
  Let $X$ be a $C^1$ incompressible vector field on a
  compact finite dimensional manifold $M$ which is
  non-negative and strictly $\J$-separated for a field 
  of non-degenerate and indefinite quadratic forms $\J$ with
  index $\indi(\J)= \dim(M)-2$.  Then $X_t$ is an Anosov
  flow.
\end{maincorollary}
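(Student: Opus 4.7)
My plan proceeds in four stages: extract partial hyperbolicity from Theorem~\ref{mthm:Jseparated-parthyp}, promote it to uniform area expansion on the $2$-dimensional center via incompressibility, rule out singularities using the non-negativity of $X$, and finally split the center into the flow line plus an unstable line.

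Taking $U=M$ as the trapping region, the maximal invariant set equals $M$ itself, which is non-trivial (any singularities are hyperbolic hence isolated, and $M$ is connected). Theorem~\ref{mthm:Jseparated-parthyp} then produces a continuous $DX_t$-invariant splitting $TM = E\oplus F$ with $\dim E = \indi(\J) = n-2$, $\dim F = 2$, and $E$ uniformly contracted. By compactness and continuity of the splitting, the angles between $E$ and $F$ are bounded below, so $|\det DX_t(x)| \asymp |\det DX_t|_{E_x}|\cdot|\det DX_t|_{F_x}|$ uniformly. Incompressibility gives $|\det DX_t|\equiv 1$ while uniform contraction forces $|\det DX_t|_{E_x}| \le K^{n-2} e^{-(n-2)\lambda t}$; hence $|\det DX_t|_{F_x}| \ge C e^{(n-2)\lambda t}$, i.e., $F$ uniformly area-expands.

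Next I rule out singularities by an infinitesimal argument. Suppose $X(\sigma) = 0$. By the blanket assumption $\sigma$ is hyperbolic, so $DX(\sigma)$ is invertible. Using $X(\sigma+v) = DX(\sigma)v + O(|v|^2)$ together with the quadratic nature of $\J$, the non-negativity $\J(X)\ge 0$ near $\sigma$ yields, after dividing by $|v|^2$ and letting $v\to 0$ along rays, $\J_\sigma(DX(\sigma)v)\ge 0$ for every $v\in T_\sigma M$. Surjectivity of $DX(\sigma)$ then forces $T_\sigma M\subset \overline{C_+(\sigma)}$, contradicting the indefiniteness of $\J$ (since $\indi(\J)=n-2\ge 1$ furnishes vectors with $\J<0$). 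Hence $X$ is nowhere zero.

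With $X$ non-vanishing, $\langle X\rangle$ is a $DX_t$-invariant 1D subbundle of $TM$. A backward-time argument combining the exponential expansion of $E$ under $DX_{-t}$ with the uniform bound $\|DX_{-t}X(x)\|=\|X(X_{-t}(x))\|\le\|X\|_\infty$ forces the $E$-component of $X$ to vanish, giving $\langle X\rangle\subset F$. Since the cocycle on $\langle X\rangle$ has bounded norm while $F$ area-expands, the induced action on the one-dimensional quotient bundle $F/\langle X\rangle$ is uniformly expanding; a standard cone/graph-transform argument inside $F$ then produces a $DX_t$-invariant line $E^u\subset F$ transverse to $\langle X\rangle$ on which $DX_t$ is uniformly expanding, yielding the Anosov splitting $TM = E\oplus\langle X\rangle\oplus E^u$. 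The most delicate step is the singularity-elimination, which crucially uses $\J$-non-negativity; the remaining steps are determinant bookkeeping plus a routine dominated-splitting construction inside the $2$-dimensional bundle $F$.
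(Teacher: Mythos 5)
Your plan is correct, and its first two stages are exactly the paper's proof: apply Theorem~\ref{mthm:Jseparated-parthyp} with $U=M$ (which is a non-trivial trapping region since $M$ is closed, connected, and the finitely many singularities are hyperbolic) to get a partially hyperbolic splitting $TM=E\oplus F$ with $\dim F=2$ and $E$ uniformly contracted, and then combine $|\det DX_t|\equiv 1$ with the uniform angle bound coming from domination to conclude area expansion along $F$, so that $M$ is a singular-hyperbolic set. Where you genuinely diverge is in eliminating singularities: the paper invokes the known results that singularities cannot lie in the interior of a singular-hyperbolic set (Doering \cite{Do87}, Morales--Pacifico--Pujals \cite{MPP99}, Vivier \cite{Viv03}, Li--Gan--Wen \cite{LGW05}), whereas you give a short self-contained argument using only the hypothesis $\J(X)\ge 0$: continuity of the field $\J$ and the expansion $X(\sigma+v)=DX(\sigma)v+o(|v|)$ give $\J_\sigma(DX(\sigma)v)\ge 0$ for all $v$, and invertibility of $DX(\sigma)$ (the paper's standing assumption that singularities in $U$ are hyperbolic, which your argument does need) forces $\J_\sigma$ to be positive semidefinite, contradicting $\indi(\J)=\dim(M)-2\ge 1$; note that with $X$ only $C^1$ you should write $o(|v|)$ rather than $O(|v|^2)$, which is all the limit along rays requires. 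This buys a more elementary, citation-free proof of the absence of zeros, exploiting the $\J$-non-negativity hypothesis directly and independently of the hyperbolic structure. Your final stage is likewise a correct sketch of the classical fact, used but not reproved in the paper, that a singular-hyperbolic set without singularities is hyperbolic: the flow direction lies in $F$ (this is precisely Lemma~\ref{le:flow-center}(1)), the quotient cocycle on $F/\langle X\rangle$ expands uniformly because $\|X\|$ is bounded above and below on the compact singularity-free set while $\det(DX_t|_F)$ grows exponentially, and a graph-transform/cone argument inside $F$ then yields the invariant expanded line $E^u$, giving the Anosov splitting $E\oplus\langle X\rangle\oplus E^u$.
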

Indeed, the results of Doering~\cite{Do87} and
Morales-Pacifico-Pujals \cite{MPP99}, in dimension three,
Vivier~\cite{Viv03} and Li-Gan-Wen~\cite{LGW05}, in
higher dimensions, ensure that there are no singularities in
the interior of a sectional-hyperbolic set, and so this set
is hyperbolic; see Section~\ref{sec:uniform-hyperb}.

To present the results about sectional-hyperbolicity, we
need some more definitions.

\subsubsection{$\J$-monotonous Linear Poincar\'e Flow}
\label{sec:j-monotonous-linear}

We apply these notions to the linear Poincar\'e flow defined
on regular orbits of $X_t$ as follows.

We assume that the vector field $X$ is non-negative on $U$.
Then, the span $E^X_x$ of $X(x)\neq\vec0$ is a
$\J$-non-degenerate subspace.  According to item (1) of
Proposition~\ref{pr:propbilinear}, this means that
$T_xM=E_x^X\oplus N_x$, where $N_x$ is the pseudo-orthogonal
complement of $E^X_x$ with respect to the bilinear form
$\J$, and $N_x$ is also non-degenerate. Moreover, by the
definition, the index of $\J$ restricted to $N_x$ is the
same as the index of $\J$. Thus, we can define on $N_x$ the
cones of positive and negative vectors, respectively,
$N_x^+$ and $N_x^-$, just like before.

Now we define the Linear Poincar\'e Flow $P^{\, t}$ of $X_t$
along the orbit of $x$, by projecting $DX_t$ orthogonally
(with respect to $\J$) over $N_{X_t(x)}$ for each $t\in\RR$:
\begin{align*}
  P^{\, t} v := \Pi_{X_t(x)}DX_t v ,
  \quad
  v\in T_x M, t\in\RR, X(x)\neq\vec0,
\end{align*}
where $\Pi_{X_t(x)}:T_{X_t(x)}M\to N_{X_t(x)}$ is the
projection on $N_{X_t(x)}$ parallel to $X(X_t(x))$.  We
remark that the definition of $\Pi_x$ depends on $X(x)$ and
$\J_X$ only. The linear Poincar\'e flow $P^{\,t}$ is a linear
multiplicative cocycle over $X_t$ on the set $U$ with the
exclusion of the singularities of $X$.

In this setting we can say that the linear Poincar\'e flow is
(strictly) $\J$-separated and (strictly) $\J$-monotonous
using the non-degenerate bilinear form $\J$ restricted to
$N_x$ for a regular $x\in U$. More precisely: $P^t$ is
$\J$-monotonous if $\partial_t\J(P^tv)\mid_{t=0}\ge0$, for
each $x\in U, v\in T_xM\setminus\{\vec0\}$ and $t>0$, and
strictly $\J$-monotonous if $\partial_t\J(P^tv)\mid_{t=0}>0$,
for all $v\in T_xM\setminus\{\vec0\}$, $t>0$ and $x\in
U$.

\begin{maintheorem}\label{mthm:2-sec-exp-J-monot}
  Let $\Lambda$ be a non-trivial attracting set
  $\Lambda$ of $U$ which is contained in the
  non-wandering set $\Omega(X)$.  Then $\Lambda$ is
  sectional hyperbolic for $X_t$ if, and only if, there
  is a $C^1$ field $\J$ of non-degenerate quadratic
  forms with constant index, equal to the dimension of
  the stable subspace of $\Lambda$, such that $X_t$ is
  a non-negative strictly $\J$-separated flow on $U$,
  whose singularities are sectionally hyperbolic with
  index $\indi(\sigma)\ge\indi(\Lambda)$, and for each
  compact invariant subset $\Gamma$ of $\Lambda$
  without singularities there exists a field of
  quadratic forms $\J_0$ equivalent to $\J$ so that the linear
  Poincar\'e flow is strictly $\J_0$-monotonous on
  $\Gamma$.
\end{maintheorem}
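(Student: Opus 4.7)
\medskip

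\noindent\textbf{Proof plan for Theorem \ref{mthm:2-sec-exp-J-monot}.}

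The plan is to deduce the two implications separately, treating the regular part of $\Lambda$ via the linear Poincar\'e flow and the singularities via the standing hypothesis on their indices. Throughout, I would use Theorem~\ref{mthm:Jseparated-parthyp} as a black box to convert strict $\J$-separation into a partially hyperbolic splitting $T_\Lambda M=E^s\oplus E^c$, with $\dim E^s$ equal to the index of $\J$. The core task is then to relate sectional expansion of $DX_t$ along $E^c$ to monotonicity of the linear Poincar\'e flow $P^t$ on the $\J$-orthogonal complement $N$ of the flow direction.

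For the necessity direction, assume $\Lambda$ is sectional hyperbolic. Partial hyperbolicity together with Theorem~\ref{mthm:Jseparated-parthyp} furnishes a $C^1$ field $\J$ of non-degenerate quadratic forms of the correct index, and non-negative strict $\J$-separation on a trapping region $U$. The hypothesis on singularities is automatic: any singularity $\sigma\in\Lambda$ inherits the partially hyperbolic splitting, and sectional expansion of $DX_t\vert_{E^c_\sigma}=e^{tDX(\sigma)\vert_{E^c_\sigma}}$ forces $\sigma$ to be sectionally hyperbolic with $\indi(\sigma)\ge\indi(\Lambda)$. For a compact invariant $\Gamma\subset\Lambda$ with no singularities, the flow direction spans a uniformly non-degenerate subbundle of $E^c$; projecting $E^c$ orthogonally onto $N$ along $X$ gives a $P^t$-invariant subbundle on which area is still uniformly expanded (up to a bounded distortion coming from the angle between $X$ and $E^s$, which is bounded away from zero on $\Gamma$). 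Then I would invoke the infinitesimal characterization from Proposition~\ref{pr:J-separated} (applied to the cocycle $P^t$ with infinitesimal generator the projected $DX$) to upgrade separation to strict monotonicity, choosing $\J_0\sim\J$ whose associated symmetric operator $J_0\cdot D+D^*\cdot J_0$ is strictly positive definite on $N\vert_\Gamma$. This is the classical ``quadratic form with strict derivative'' construction and is where compatibility $\J_0\sim\J$ enters in an essential way.

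For the sufficiency direction, strict $\J$-separation and Theorem~\ref{mthm:Jseparated-parthyp} produce the partially hyperbolic splitting $E^s\oplus E^c$ with $\dim E^s=\indi(\J)$. It then remains to check the sectional expansion \eqref{eq:def-sec-exp} along $E^c$. I would argue by contradiction using Remark~\ref{rmk:sec-exp-discrete}: if sectional expansion failed, one would find a sequence of points $x_n\in\Lambda$, times $t_n\to\infty$, and $2$-planes $L_n\subset E^c_{x_n}$ on which $|\det(DX_{t_n}\vert_{L_n})|$ is bounded. Taking an invariant probability measure supported on a limit of the orbit segments (available because $\Lambda\subset\Omega(X)$) and using Poincar\'e recurrence, the support $\Gamma$ of any ergodic component is a compact invariant subset of $\Lambda$. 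Two cases arise: either $\Gamma$ contains no singularity, in which case the hypothesis supplies a $\J_0\sim\J$ with $P^t$ strictly $\J_0$-monotonous on $\Gamma$, and the infinitesimal positivity of $\partial_t\J_0(P^tv)\mid_{t=0}$ on the compact set $\Gamma$ forces a uniform exponential lower bound on the area cocycle $|\det P^t\vert_{L}|$ for $2$-planes $L\subset N\vert_\Gamma$; pulling this back through the flow direction recovers sectional expansion on $\Gamma$, contradicting the choice of $(x_n,t_n,L_n)$. Or $\Gamma$ contains a singularity, in which case $\Gamma=\{\sigma\}$ by non-triviality (or by reducing to a neighborhood of $\sigma$) and the sectionally hyperbolic hypothesis on $\sigma$ with $\indi(\sigma)\ge\indi(\Lambda)$ directly yields the required exponential growth of $|\det(DX_t\vert_{L})|$ for $L\subset E^c_\sigma$.

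The main obstacle, and the step I expect to be the most delicate, is the passage from monotonicity on every compact invariant \emph{subset without singularities} to the global sectional expansion on $\Lambda$ itself, because one cannot simply cover $\Lambda\setminus\sing(X)$ by a single such $\Gamma$ with uniform constants: the constant $C$ and rate $\lambda$ in $\J_0$-monotonicity may degenerate as orbits approach $\sing(X)\cap\Lambda$. The remedy is to combine the ergodic/recurrence argument above with the sectional hyperbolicity of each singularity in $\Lambda$, so that near any $\sigma$ the linearization $DX(\sigma)$ already supplies the needed area expansion on $E^c_\sigma$ and by continuity of $\J$ on a neighborhood of $\sigma$ this persists for nearby regular orbits, patching with the uniform estimates obtained away from $\sing(X)\cap\Lambda$. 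Connectedness of $\Lambda$ in the non-trivial case, together with the compatibility relation $\J_0\sim\J$, is what makes this patching consistent.
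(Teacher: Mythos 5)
Your necessity direction is essentially the paper's argument (project the sectional-hyperbolic splitting onto the $\J$-orthogonal complement of the flow, pass to an adapted metric, and verify strict monotonicity infinitesimally), so the real issue is the sufficiency direction, and there your argument has a genuine gap. The dichotomy you propose for the support $\Gamma$ of a limiting ergodic measure is false: an ergodic invariant measure can have a singularity in its support without being the Dirac mass at it (for the geometric Lorenz attractor the physical measure has the whole attractor, singularity included, as its support). In that case neither branch of your case analysis applies: you cannot invoke the strict $\J_0$-monotonicity hypothesis, because it is only assumed on compact invariant subsets \emph{disjoint} from $\sing(X)$, and the sectional hyperbolicity of $\sigma$ only controls $DX_t(\sigma)$ itself, not the area growth along regular orbit segments that spend long times near $\sigma$. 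More generally, the hypothesis gives no information --- not even infinitesimally --- at points whose orbit closure meets $\sing(X)$ (which may be ``most'' of $\Lambda$), the forms $\J_0$ and the rates may vary with $\Gamma$ with no uniformity, and $\Lambda\setminus B(\sing(X),\epsilon)$ is not invariant, so it cannot be fed into the hypothesis either. The ``patching by continuity of $\J$ near $\sigma$ plus connectedness'' that you flag as delicate is precisely the crux of singular/sectional hyperbolicity and is not supplied by your sketch.

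The paper closes exactly this gap by a different mechanism. First, Proposition~\ref{pr:J-monot-hyperb} converts strict $\J_0$-monotonicity into hyperbolicity of the linear Poincar\'e flow on each compact invariant $\Gamma\subset\Lambda^*_X(U)$; second, the hypotheses (positive definiteness of symmetric operators over compact sets) are robust under $C^1$ perturbation of the vector field, so the same holds for all $Y$ close to $X$; third, Theorem~\ref{thm:2secexp-LPFhyp} uses the standing assumption $\Lambda\subset\Omega(X)$ and the closing lemma to approximate regular points by periodic orbits of nearby vector fields --- these orbits \emph{are} compact invariant sets without singularities, so the robust hypothesis applies to them with uniform bounds --- then defines the splitting over $\Lambda$ as limits of the splittings along periodic orbits, extends it over the singularities using the index condition $\indi(\sigma)\ge\indi(\Lambda)$, and deduces sectional expansion of $E^c$ from the robust uniform estimates on periodic orbits, quoting \cite[Sections 5.4.1--5.4.3]{AraPac2010}. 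Your proposal would need either this closing-lemma/robustness machinery or a genuine substitute for it (e.g.\ a quantitative analysis of orbit segments passing near sectionally hyperbolic singularities); as written, the step from monotonicity on singularity-free invariant subsets to sectional expansion on all of $\Lambda$ does not go through.
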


As usual, we say that $q\in M$ is {\em non-wandering}
for $X$ if for every $T>0$ and every neighborhood $W$
of $q$ there is $t>T$ such that $X_t(W)\cap W\neq
\emptyset$.  The set of non-wandering points of $X$ is
denoted by $\Omega(X)$.  A singularity $\sigma$ is
\emph{sectionally hyperbolic with index
  $\indi(\sigma)$} if $\sigma$ is a hyperbolic singularity
with stable direction $E^s_\sigma$ having dimension $\indi(\sigma)$
and a central direction $E^c_\sigma$ such that
$T_\sigma M=E^s_\sigma\oplus E^c_\sigma$ is a
$DX_t(\sigma)$-invariant splitting, $E^s_\sigma$ is
uniformly contracted and $E^c_\sigma$ is sectionally
expanded by the action of $DX_t(\sigma)$.

As before, the condition of $\J$-monotonicity for the Linear
Poincar\'e Flow can be expressed using only the vector field
$X$ and its space derivative $DX$ as follows.

\begin{proposition}
  \label{pr:P-J-monotonous}
  A $\J$-non-negative vector field $X$ on a forward
  invariant region $U$ has a Linear Poincar\'e Flow which is (strictly)
  $\J$-monotone if, and only if, the operator $\hat J_x:=
  DX(x)^*\cdot \Pi_x^* J \Pi_x + \Pi_x^* J \Pi_x\cdot DX(x)$
  is a (positive) non-negative self-adjoint operator, that
  is, all eigenvalues are (positive) non-negative, for each
  $x\in U$ such that $X(x)\neq\vec0$.
\end{proposition}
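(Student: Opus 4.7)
The plan is to compute the infinitesimal variation of $\J$ along the Linear Poincar\'e Flow at an arbitrary regular point $x\in U$ and recognize the resulting quadratic form as the one induced by $\hat J_x$, in the spirit of Proposition~\ref{pr:J-separated}. Representing $\J_y(w)=\langle J_y w,w\rangle$ via the symmetric operator associated to the adapted inner product, for each $v\in N_x$ I rewrite
\[
\J(P^t v)=\big\langle J_{X_t(x)}\Pi_{X_t(x)}DX_t(x)v,\,\Pi_{X_t(x)}DX_t(x)v\big\rangle=\big\langle \Pi_{X_t(x)}^{*}J_{X_t(x)}\Pi_{X_t(x)}DX_t(x)v,\,DX_t(x)v\big\rangle,
\]
and differentiate at $t=0$ using $DX_0(x)=Id$, $\partial_t DX_t(x)\vert_{t=0}=DX(x)$ and $\Pi_{X_0(x)}=\Pi_x$. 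The two Leibniz contributions coming from the outer $DX_t v$ factors combine, using self-adjointness of $J$, into exactly $\langle \hat J_x v,v\rangle$; what is left over is a term $\langle \partial_t(\Pi_{X_t(x)}^{*}J_{X_t(x)}\Pi_{X_t(x)})\vert_{t=0}v,v\rangle$ stemming from the variation of the form itself along the orbit.

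To dispose of this leftover term I will argue exactly as in the proof of Proposition~\ref{pr:J-separated}: the adapted inner product allows me to pass from $\J$ to a compatible field $\J_0\sim\J$ in which $\Pi^{*}J\Pi$ is stationary at $x$ along the flow, so that the extra term vanishes on vectors of $N_x$. Since passing to a compatible form does not affect either (strict) $\J$-monotonicity or the sign of $\hat J_x$, this yields the pointwise identity $\partial_t\J(P^t v)\vert_{t=0}=\langle \hat J_x v,v\rangle$ for every $v\in N_x$ and every regular $x\in U$. From here the equivalence is immediate: by definition $P^t$ is $\J$-monotone (respectively strictly $\J$-monotone) if and only if the left-hand side is $\ge 0$ (respectively $>0$) for every nonzero $v\in N_x$, which is the same as saying that the symmetric quadratic form $\langle\hat J_x\cdot,\cdot\rangle$ is non-negative (respectively positive) on $N_x$, equivalently that all eigenvalues of the restriction of $\hat J_x$ to $N_x$ are non-negative (respectively positive).

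The main obstacle, and the reason the computation above does not by itself finish the proof, will be handling the derivative of $\Pi^{*}J\Pi$ along the flow: one must verify carefully that the adapted inner product and the compatible form machinery already used in Proposition~\ref{pr:J-separated} continue to apply in the presence of the projection $\Pi_x$, and that choosing such a compatible form at each basepoint preserves both the sign of the symmetric operator $\hat J_x$ and the monotonicity condition on the Poincar\'e flow. Once this technical point is settled, the proposition reduces to the standard identification of monotonicity of a quadratic form with positivity of its associated symmetric operator.
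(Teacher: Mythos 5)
Your overall strategy --- differentiate $\J(P^t v)$ at $t=0$ and identify the resulting quadratic form with $\langle \hat J_x v,v\rangle$ --- is the same as the paper's, but the way you dispose of the leftover term $\langle \partial_t\big(\Pi_{X_t(x)}^*J\Pi_{X_t(x)}\big)\vert_{t=0}v,v\rangle$ is a genuine gap. You propose to pass to a compatible field $\J_0\sim\J$ for which $\Pi^*J\Pi$ is stationary along the flow, and you assert that this changes neither (strict) $\J$-monotonicity nor the sign of $\hat J_x$. Neither claim is justified: no construction of such a $\J_0$ is given, and, more seriously, (strict) $\J$-monotonicity is a condition on the derivative $\partial_t\J(P^tv)\vert_{t=0}$, which is not invariant under replacing $\J$ by an equivalent form --- this is precisely why Theorem~\ref{mthm:2-sec-exp-J-monot} is stated with an existential quantifier over compatible fields $\J_0$. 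Moreover, the operator $\hat J_x$ in the statement is built from the given $J$ and $\Pi_x$; replacing $J$ by $J_0$ changes the operator whose positivity you are supposed to characterize, so even if your $\J_0$ existed you would have proved a different proposition.

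The fix is that no detour is needed: the leftover term vanishes identically on $N_x$ for the original $\J$. Work in the adapted coordinates of Section~\ref{sec:propert-quadrat-forms}, so that $J$ is a fixed symmetric operator along the orbit (the same convention already used in Proposition~\ref{pr:J-separated}), and write $\Pi_z w=w-\langle Jw,\hat X(z)\rangle\hat X(z)$ with $\hat X=X/|X|$. The $t$-dependence of $\Pi_{X_t(x)}$ enters only through $\partial_t\hat X(X_t(x))\vert_{t=0}=\Pi_xDX(x)\hat X(x)$, and when the resulting terms are paired with $v\in N_x$ each of them carries a factor $\langle Jv,\hat X(x)\rangle=0$ or $\langle J\hat X(x),v\rangle=0$; hence $\partial_t\langle J\Pi_{X_t(x)}v,\Pi_{X_t(x)}v\rangle\vert_{t=0}=0$ and the identity $\partial_t\J(P^tv)\vert_{t=0}=\langle\hat J_x v,v\rangle$ holds exactly, as in~\eqref{eq:LPF-J-derivative}; this explicit computation of $\partial_t(\Pi_{X_t(x)}DX_tv)$ is what the paper's proof (Lemma~\ref{le:P-J-monotonous}) does. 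Two further remarks: you are right that positivity of $\langle\hat J_x\cdot,\cdot\rangle$ must be read on $N_x$ (the form vanishes on the flow direction, so the statement is to be interpreted as in the paper's proof); and to pass from the condition at $t=0$ to monotonicity for all $t\ge 0$ you should invoke, as the paper does, the flow property $\partial_t\J(P^{t+s}v)\vert_{t=0}=\partial_t\J\big(P^t(P^sv)\big)\vert_{t=0}$ together with the fact that $P^s:N_x\to N_{X_s(x)}$ is an isomorphism and $U$ is forward invariant.
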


The conditions above are again consequence of the corresponding
results for linear multiplicative cocycles over flows, as
explained in Section~\ref{sec:approach-via-linear}.


\subsection{Examples}
\label{sec:exampl}

With the equivalence provided by
Theorems~\ref{mthm:Jseparated-parthyp}
and~\ref{mthm:2-sec-exp-J-monot} and
Corollaries~\ref{mcor:J-separation-hyperbolicity} and
\ref{mcor:incompress-anosov}, we have plenty of examples
illustrating our results.

\begin{example}\label{ex:classical}
We can consider
\begin{itemize}
\item the classical examples of uniformly hyperbolic
  attractors for $C^1$ flows, in any dimension greater or
  equal to $3$; see e.g. \cite{BR75}.
\item the classical Lorenz attractor from the Lorenz ODE
  system and the geometrical Lorenz attractors; see
  e.g.~\cite{Lo63,Tu99,viana2000i}.
\item singular-hyperbolic (or Lorenz-like) attractors and
  attracting sets in three dimensions; see
  e.g.~\cite{MPP04,Morales07,AraPac2010}.
\item contracting Lorenz (Rovella) attractors; see
  e.g. \cite{Ro93,MetzMor06}.
\item sectional-hyperbolic attractors for dimensions higher
  than three; see e.g~\cite{BPV97,BDV2004,MeMor06}.
\item multidimensional Rovella-like attractors; see
  \cite{ACPP11}.
\end{itemize}
\end{example}

The following examples illustrate the fact that the change
of coordinates to adapt the quadratic forms as explained in
Section~\ref{sec:propert-quadrat-forms} is important in
applications.

\begin{example}
  \label{ex:suspension}
  Given a diffeomorphism $f:\TT^2\to \TT^2$ of the
  $2$-torus, let $X_t:M\to M$ be a suspension flow with roof
  function $r:M\to[r_0,r_1]$ over the base transformation
  $f$, where $0<r_0<r_1$ are fixed, as follows.

  We define $M:=\{ (x,y)\in \TT^2\times[0,+\infty): 0\le y <
  r(x) \}$.  For $x=x_0\in \TT^2$ we denote by $x_n$ the
  $n$th iterate $f^n(x_0)$ for $n\ge0$ and by $S_n \vfi(x_0)
  = S_n^f \vfi(x_0) = \sum_{j=0}^{n-1} \vfi( x_j )$ the
  $n$th-ergodic sum, for $n\ge1$ and for any given real
  function $\vfi:\TT^2\to\RR$ in what follows. Then for each
  pair $(x_0,s_0)\in M$ and $t>0$ there exists a unique
  $n\ge1$ such that $S_n r(x_0) \le s_0+ t < S_{n+1} r(x_0)$
  and we define
\begin{align*}
   X_t(x_0,s_0) = \big(x_n,s_0+t-S_n r(x_0)\big).
\end{align*}
We note that the vector field corresponding to this
suspension flow is the constant vector field $X=(0,1)$. We
observe that the space $M$ becomes a compact manifold if we
identify $(x,r(x))$ with $(f(x),0)$; see e.g.  \cite{PM82}.

Hence, if we are given a field of quadratic forms $\J$ on
$M$ and do not change coordinates accordingly, we obtain
$DX\equiv0$ and so the relation provided by
Proposition~\ref{pr:J-separated} will not be fulfilled,
because $\tilde J_x -\delta(x) J = -\delta(x) J$ is not
positive definite for any choice of $\delta$.
\end{example}

\begin{remark}\label{rmk:delta-null}
  Moreover, if a flow $X_t$ is such that the infinitesimal
  generator $X$ is constant in the ambient space is
  $\J$-separated, then strict $\J$-separation implies that
  $-\delta(x) J$ is positive definite for all $x\in U$, and
  so $\delta$ is the null function on the trapping region.
\end{remark}

\begin{example}
  \label{ex:J-separated}
  Now consider the same example as above but now $f$ is an
  Anosov diffeomorphism of $\TT^2$ with the hyperbolic
  splitting $E^s\oplus E^u$ defined at every point. Then the
  semiflow will be partially hyperbolic with splitting
  $E^s\oplus (E^X\oplus E^u)$ where $E^X$ is the
  one-dimensional bundle spanned by the flow direction:
  $E^X_{(x,s)}=\RR\cdot X(x,s), (x,s)\in M$.

  Hence, Theorem~\ref{mthm:Jseparated-parthyp} ensures the
  existence of a field $\J$ of quadratic forms such that
  $X^t$ is strictly $\J$-separated.

  Comparing with the observation at the end of Example
  \ref{ex:suspension}, this demands a change of
  coordinates and, in those coordinates, the vector field
  $X$ will no longer be a constant vector field.
\end{example}

\begin{example}
  \label{ex:no-domination}
  Now we present a suspension flow whose base map has a
  dominated splitting but the flow does not admit any
  dominated splitting.

  Let $f:\TT^4\times\TT^4$ be the diffeomorphism described
  in \cite{BoV00} which admits a continuous dominated
  splitting $E^{cs}\oplus E^{cu}$ on $\TT^4$, but does not
  admit any hyperbolic (uniformly contracting or expanding)
  sub-bundle.
  \begin{figure}[htpb]
    \includegraphics[height=3cm]{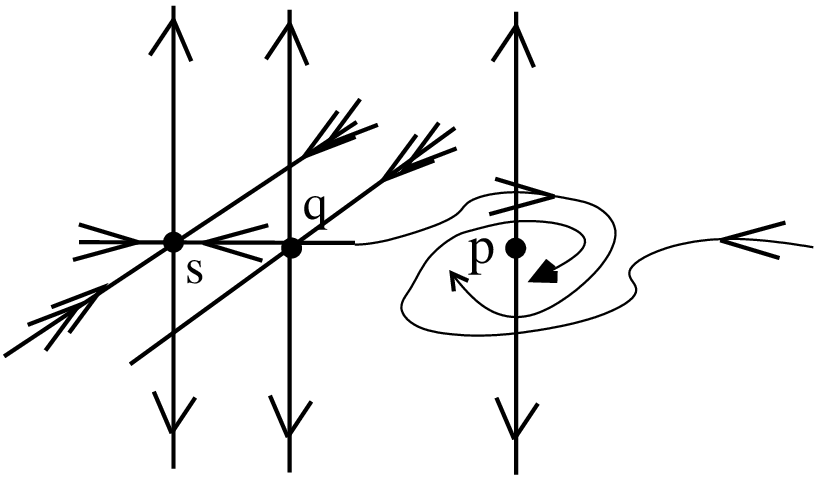}
    \caption{Saddles with real and complex eigenvalues.}
    \label{fig:nonhypcomplex}
  \end{figure}
  There are hyperbolic fixed points of $f$ satisfying, see
  Figure~\ref{fig:nonhypcomplex}:
  \begin{itemize}
  \item $\dim E^u(p)=2=\dim E^s(p)$ and there exists no
    invariant one-dimensional sub-bundle of $E^u(p)$;
  \item $\dim E^u(\widetilde p)=2=\dim E^s(\widetilde p)$ and there exists no
    invariant one-dimensional sub-bundle of $E^s(\widetilde
    p)$;
  \item $\dim E^s(\widetilde q)=3$ and $\dim E^u(q)=3$.
  \end{itemize}
  Hence, the suspension semiflow of $f$ with constant roof
  function $1$ does not admit any dominated splitting. In
  fact, the natural invariant splitting $E^{cs}\oplus
  E^X\oplus E^{cu}$ is the continuous invariant splitting
  over $\TT^4\times [0,1]$ with bundles of least dimension,
  and is not dominated since at the point $p$ the flow
  direction $E^X(p)$ dominates the $E^{cs}(p)=E^s(p)$
  direction, but at the point $q$ this domination is
  impossible.
\end{example}

We now present simple cases of partial hyperbolicity/strict
separation and hyperbolicity/strict monotonicity,
obtaining explicitly the function $\delta$.

\begin{example}\label{ex:Lorenz-like-J}
  Let us consider a hyperbolic saddle singularity
  $\sigma$ at the origin for a smooth vector field $X$
  on $\RR^3$ such that the eigenvalues of $DX(\sigma)$
  are real and satisfy
  $\lambda_1<\lambda_2<0<\lambda_3$. Through a
  coordinate change, we may assume that
  $DX(\sigma)=\diag\{\lambda_1,\lambda_2,\lambda_3\}$. We
  consider the following quadratic forms in $\RR^3$.
  \begin{description}
  \item[Index 1] $\J_1(x,y,z)=-x^2+y^2+z^2$. Then $\J_1$ is
    represented by the matrix $J_1=\diag\{-1,1,1\}$,
    that is, $\J_1(\vec w)=\langle J_1(\vec w),\vec
    w\rangle$ with the canonical inner product.

  Then $\tilde J_1=J_1\cdot
  DX(\sigma)+DX(\sigma)^*\cdot
  J_1=\diag\{-2\lambda_1,2\lambda_2,2\lambda_3\}$ and
  $\tilde J_1'-\delta J_1>0\iff
  2\lambda_1<\delta<2\lambda_2<0$. So $\delta$ must be
  negative and $\tilde J_1$ is not positive
  definite. From Proposition~\ref{pr:J-separated} and
  Theorem~\ref{thm:J-separated-tildeJ} we have
  strict $\J_1$-separation, thus partial hyperbolicity
  with the negative $x$-axis a uniformly contracted
  direction dominated by the $yz$-direction; but this
  is not an hyperbolic splitting.

  Moreover the conclusion would be the same if
  $\lambda_3$ where negative: we get a sink with a
  partially hyperbolic splitting.

  \item[Index 2] $\J_2(x,y,z)=-x^2-y^2+z^2$ represented
    by $J_2=\diag\{-1,-1,1\}$.

  Then $\tilde
  J_2=\diag\{-2\lambda_1,-2\lambda_2,2\lambda_3\}$ and
  $\tilde J_2-\delta J_1>0 \iff
  2\lambda_2<\delta<2\lambda_3$. So $\delta$ might be
  either positive or negative, but we still have strict
  $\J_2$-separation, and $\tilde J_2$ is positive
  definite. Hence by
  Theorem~\ref{thm:J-separated-tildeJ} $X_t$ is
  strictly $\J_2$-monotone at $\sigma$ and the
  splitting
  $\RR^3=(\RR^2\times\{0\})\oplus(\{(0,0)\}\times\RR)$
  is hyperbolic.

  \item[Index 1, not separated]
    $\J_3(x,y,z)=x^2-y^2+z^2$ represented by
    $J_2=\diag\{1,-1,1\}$.

  Now $\tilde
  J_3=\diag\{2\lambda_1,-2\lambda_2,2\lambda_3\}$ is
  not positive definite and $\tilde J_3-\delta J_3$ is
  the diagonal matrix
  $\diag\{2\lambda_1-\delta,-2\lambda_2+\delta,2\lambda_3-\delta\}$
  which represents a positive semidefinite quadratic
  form if, and only if, $\delta\le2\lambda_1,
  \delta\ge2\lambda_2$ and $\delta\le2\lambda_3$, which
  is impossible. Hence we do not have domination of the
  $y$-axis by the $xz$-direction.
\end{description}
\end{example}

\subsection{Organization of the text}
\label{sec:organiz-text}

We study $\J$-separated linear multiplicative cocycles over
flows in Section~\ref{sec:propert-j-separat}, where we prove
the main results whose specialization for the derivative
cocycle of a smooth flow provide the main theorems,
including Proposition~\ref{pr:J-separated}. We then consider
the case of the derivative cocycle and prove
Theorem~\ref{mthm:Jseparated-parthyp} and
Corollaries~\ref{mcor:J-separation-hyperbolicity}
and~\ref{mcor:incompress-anosov} in
Section~\ref{sec:strict-j-separat}. We turn to study
sectional hyperbolic attracting sets in
Section~\ref{sec:approach-via-linear}, where we prove
Theorem~\ref{mthm:2-sec-exp-J-monot} and
Proposition~\ref{pr:P-J-monotonous}.

\subsection*{Acknowledgments}

This is part of the second author PhD Thesis
\cite{luciana-tese} at Instituto de Mate\-m\'atica da
Universidade Federal do Rio de Janeiro (UFRJ), whose
research facilities were most useful. This work have been 
improved during her postdoctoral research at IMPA with financial 
support of INCTMat-CAPES.


\section{Some properties of quadratic forms and
  $\J$-separated cocycles}
\label{sec:propert-quadrat-forms}

The assumption that $M$ is a compact manifold enables us to
globally define an inner product in $E$ with respect to
which we can find the an orthonormal basis associated to
$\J_x$ for each $x$, as follows. Fixing an orthonormal basis
on $E_x$ we can define the linear operator
\begin{align*}
  J_x:E_x\to E_x \quad\text{such that}\quad \J_x(v)=<J_x
  v,v> \quad \text{for all}\quad v\in T_xM,
\end{align*}
where $<,>=<,>_x$ is the inner product at $E_x$. Since we
can always replace $J_x$ by $(J_x+J_x^*)/2$ without changing
the last identity, where $J_x^*$ is the adjoint of $J_x$
with respect to $<,>$, we can assume that $J_x$ is
self-adjoint without loss of generality.  Hence, we
represent $\J(v)$ by a non-degenerate symmetric bilinear
form $<J_x v,v>_x$.

\subsection{Adapted coordinates for the quadratic form}
\label{sec:adapted-coordin-quad}

Now we use Lagrange's method to
diagonalize this bilinear form, obtaining a base
$\{u_1,\dots,u_n\}$ of $E_x$ such that
\begin{align*}
  \J_x(\sum_{i}\alpha_iu_i)=\sum_{i=1}^q -\lambda_i\alpha_i^2 +
  \sum_{j=q+1}^n \lambda_j\alpha_j^2, \quad
  (\alpha_1,\dots,\alpha_n)\in\RR^n.
\end{align*}
Replacing each element of
this base according to $v_i=|\lambda_i|^{1/2}u_i$ we deduce that
\begin{align*}
\J_x(\sum_{i}\alpha_iv_i)=\sum_{i=1}^q -\alpha_i^2 +
  \sum_{j=q+1}^n \alpha_j^2, \quad
  (\alpha_1,\dots,\alpha_n)\in\RR^n.
\end{align*}
Finally, we can redefine $<,>$ so that the base
$\{v_1,\dots, v_n\}$ is orthonormal. This can be done
smoothly in a neighborhood of $x$ in $M$ since we are
assuming that the quadratic forms are non-degenerate; the
reader can check the method of Lagrange in a standard Linear
Algebra textbook and observe that the steps can be performed
robustly and smoothly for all nearby tangent spaces; see for
instance \cite{ShafRemiz,Postnikov2} and
Example~\ref{ex:rellich} below.


In this adapted inner product we have that $J_x$ has entries
from $\{-1,0,1\}$ only, $J_x^*=J_x$ and also that
$J_x^2=J_x$.
Having fixed the orthonormal frame as above, the
\emph{standard negative subspace} at $x$ is the one spanned
by $v_{1},\dots, v_{q}$ and the \emph{standard positive
  subspace} at $x$ is the one spanned $v_{q+1},\dots,v_n$.

\begin{example}\label{ex:rellich}
  The example given by Rellich in
  \cite[pp. 52-53]{Rellich69} and adapted by Kato in
  \cite[Example 5.3, Section II.5.4]{Kato95} show that
  diagonalization of symmetric matrices cannot in general be
  done smoothly: consider the following family in
  $\RR^{2\times 2}$
  \begin{align*}
    A_t=
    \begin{pmatrix}
      1-e^{-t^{-2}}\cos(2/t) & -e^{-t^{-2}}\sin(2/t)
      \\
       -e^{-t^{-2}}\sin(2/t) & 1-e^{-t^{-2}}\cos(2/t)
    \end{pmatrix}
    \text{ for $t\neq0$ and  }
    A_0=Id.
  \end{align*}
  The eigenvectors and respective eigenvalues can explicitly
  be calculated for $t\neq0$
  \begin{align*}
    u_t^1&=(\cos(1/t),\sin(1/t))
    \quad\text{belongs to}\quad
    &\lambda_t^1=1-e^{-t^{-2}};
    \\
    u_t^2&=(\sin(1/t),-\cos(1/t))
    \quad\text{belongs to}\quad
    &\lambda_t^2=1+e^{-t^{-2}}.
  \end{align*}
Clearly the orthonormal basis $(u_t^1,u_t^2)$ has no limit
when $t\to0$. 

However, if we consider the quadratic form $Q(X)=X^T A_t X$
with $X=\binom{x}{y}$ we obtain
\begin{align*}
  Q\binom{x}{y}=b_t[x^2+y^2-2a_tyx]=b_t[x^2-2(a_t y)x+(a_ty)^2+y^2-(a_ty)^2]
\end{align*}
where $b_t=1-e^{-t^{-2}}\cos(2/t)$ and
$a_t=\frac{e^{-t^{-2}}\sin(2/t)}{b_t}$ are $C^\infty$ real
maps with a $C^\infty$ extension to $t=0$. Completing the
square in $Q$ we arrive at
  \begin{align}\label{eq:Qsum}
    Q\binom{x}{y}=b_t[(x-a_yy)^2+(1-a_t^2)y^2]=b_tu^2+b_t(1-a_t^2)v^2
  \end{align}
after setting
\begin{align*}
  \binom{u}{v}=
  \begin{pmatrix}
    1 & -a_t \\ 0 & 1
  \end{pmatrix}
  \binom{x}{y}
  \quad\text{or}\quad
  \binom{x}{y}=
  \begin{pmatrix}
    1 & a_t \\ 0 & 1
  \end{pmatrix}
  \binom{u}{v}.
\end{align*}
That is, in the basis $\{w^1_t=(1,0), w^2_t=(a_t,1)\}$ the
quadratic form $Q$ is reduced to a linear combination of
squares (\ref{eq:Qsum}). The choice of basis and the
coefficients of the reduction are not unique (above we might
have completed the square with $y^2$ instead of $x^2$) but
can be chosen smoothly.

Hence the problem of smooth reduction to a sum of squares of
a family of quadratic forms is of a different nature from
the problem of smooth diagonalization of a family of
symmetric linear operators.
\end{example}

\subsubsection{$\J$-symmetrical matrixes and $\J$-selfadjoint operators}
\label{sec:j-symmetr-matrix}

The symmetrical bilinear form defined by $(v,w)=\langle J_x
v,w\rangle$, $v,w\in E_x$ for $x\in M$ endows
$E_x$ with a pseudo-Euclidean structure. Since $\J_x$ is
non-degenerate, then the form $(\cdot,\cdot)$ is likewise
non-degenerate and many properties of inner products are
shared with symmetrical non-degenerate bilinear forms. We
state some of them below.

\begin{proposition}
  \label{pr:propbilinear}
  Let $(\cdot,\cdot):V\times V \to\RR$ be a real symmetric
  non-degenerate bilinear form on the real finite
  dimensional vector space $V$.
  \begin{enumerate}
  \item $E$ is a subspace of $V$ for which $(\cdot,\cdot)$ is
    non-degenerate if, and only if, $V=E\oplus E^\perp$.

    We recall that $E^\perp:=\{v\in V: (v,w)=0
    \quad\text{for all}\quad w\in E\}$, the
    pseudo-orthogonal space of $E$, is defined using the
    bilinear form.
  \item Every base $\{v_1,\dots,v_n\}$ of $V$ can be
    orthogonalized by the usual Gram-Schmidt process of
    Euclidean spaces, that is, there are linear combinations
    of the basis vectors $\{w_1,\dots, w_n\}$ such that they
    form a basis of $V$ and
    $(w_i,w_j)=0$ for $i\neq j$.  Then this last base can be
    pseudo-normalized: letting $u_i=|(w_i,w_i)|^{-1/2}w_i$ we
    get $(u_i,u_j)=\pm\delta_{ij}, i,j=1,\dots,n$.
  \item There exists a maximal dimension $p$ for a subspace
    $P_+$ of $\J$-positive vectors and a maximal dimension
    $q$ for a subspace $P_-$ of $\J$-negative vectors;
    we have $p+q=\dim V$ and $q$ is known
    as the \emph{index} of $\J$.
  \item For every linear map $L:V\to\RR$ there exists a
    unique $v\in V$ such that $L(w)=(v,w)$ for each $w\in V$.
  \item For each $L:V\to V$ linear there exists a unique
    linear operator $L^+:V\to V$ (the pseudo-adjoint) such that
    $(L(v),w)=(v,L^+(w))$ for every $v,w\in V$.
  \item Every pseudo-self-adjoint $L:V\to V$, that is,
    such that $L=L^+$, satisfies
    \begin{enumerate}
    \item eigenspaces corresponding to distinct eigenvalues
      are pseudo-orthogonal;
    \item if a subspace $E$ is $L$-invariant, then $E^\perp$
      is also $L$-invariant.
    \end{enumerate}
  \end{enumerate}
\end{proposition}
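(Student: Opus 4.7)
The plan is to treat each item in order, since they are the standard structural facts of a non-degenerate symmetric bilinear form on a finite-dimensional real vector space, and most of them either follow from the dual pairing induced by $(\cdot,\cdot)$ or are mild adaptations of Euclidean arguments.

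First I would prove item (4), because items (1) and (5) rest on it. The map $\Phi:V\to V^*$ sending $v$ to the linear functional $(v,\cdot)$ is linear, and its kernel is exactly the radical of $(\cdot,\cdot)$, which is trivial by non-degeneracy; since $\dim V=\dim V^*$, $\Phi$ is a linear isomorphism. This gives a unique representative $v$ for any $L:V\to\RR$. Item (5) is then immediate: for each fixed $w\in V$, the assignment $v\mapsto (L(v),w)$ is linear on $V$, so by (4) there is a unique $L^+(w)\in V$ with $(L(v),w)=(v,L^+(w))$; checking linearity of $w\mapsto L^+(w)$ uses uniqueness in (4).

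For item (1), if $(\cdot,\cdot)$ is non-degenerate on $E$, the same isomorphism argument applied inside $E$ shows $E\cap E^\perp=\{0\}$; combined with the dimension identity $\dim E+\dim E^\perp=\dim V$ (obtained from restricting $\Phi$ and looking at annihilators), this yields $V=E\oplus E^\perp$. Conversely, if $V=E\oplus E^\perp$ then a vector in $E$ that is $(\cdot,\cdot)$-orthogonal to all of $E$ lies in $E\cap E^\perp$, hence is zero. Item (2) is the Gram--Schmidt recursion, with the single caveat that at each step one must be able to choose a vector $w_k$ with $(w_k,w_k)\neq 0$; this is arranged using non-degeneracy (if no such vector existed among the remaining combinations, the form would restrict trivially to a complement, contradicting non-degeneracy), and then the rescaling $u_k=|(w_k,w_k)|^{-1/2}w_k$ produces the pseudo-orthonormal basis. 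Item (3) follows from (2): reading off the signs of $(u_i,u_i)$ gives a decomposition into positive and negative parts, and Sylvester's law of inertia (proved by a standard dimension/subspace intersection argument showing that any positive-definite subspace must inject into the positive part of a fixed decomposition) yields the invariance of the numbers $p$ and $q$.

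Finally, item (6) is the pseudo-Euclidean version of the standard spectral facts. For (6a), if $Lv=\lambda v$, $Lw=\mu w$ with $\lambda\neq\mu$, then from $L=L^+$ we get $\lambda(v,w)=(Lv,w)=(v,Lw)=\mu(v,w)$, forcing $(v,w)=0$. For (6b), fix $v\in E^\perp$ and any $w\in E$; since $E$ is $L$-invariant, $Lw\in E$, so $(Lv,w)=(v,Lw)=0$, showing $Lv\in E^\perp$. The only mildly delicate point I anticipate is item (2), where one has to argue that non-degeneracy always allows the selection of a non-null vector at each Gram--Schmidt step; this is the only place where the pseudo-Euclidean setting genuinely departs from the Euclidean one, and it is the step I would write out most carefully.
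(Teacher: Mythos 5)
Your proposal is correct: the paper gives no argument of its own for this proposition, deferring to the standard literature (\cite{Maltsev63}), and your sketch is exactly the standard route found there --- the duality isomorphism $v\mapsto (v,\cdot)$ for items (4)--(5), the dimension count via annihilators for item (1), inductive selection of non-isotropic vectors for item (2), Sylvester's law of inertia for item (3), and the usual adjointness computations for item (6). The one point you rightly flag as needing care --- that at each Gram--Schmidt stage the restriction of the form to the orthogonal complement of the previously chosen non-isotropic vectors is again non-degenerate (and hence, being nonzero by polarization, contains a non-isotropic vector) --- is indeed the only place where the pseudo-Euclidean setting requires an extra line beyond the Euclidean argument.
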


The proofs are rather standard and can be found in
\cite{Maltsev63}.

\subsection{Properties of $\J$-separated cocycles}
\label{sec:propert-j-separat}

In what follows we usually drop the subscript indicating the
point where $\J$ is calculated to avoid heavy notation,
since the base point is clear from the context.

\subsubsection{$\J$-separated linear maps}
\label{sec:j-separat-linear}

The following simple result will be very useful in what follows.

\begin{lemma}
  \label{le:kuhne}
  Let $V$ be a real finite dimensional vector space endowed
  with a indefinite non-degenerate quadratic form
  $\J:V\to\RR$.

If a symmetric bilinear form $F:V\times V\to\RR$ is
non-negative on $C_0$ then
\begin{align*}
  r_+=\inf_{v\in C_+} \frac{F(v,v)}{\langle Jv,v\rangle}
  \ge \sup_{u\in C_-}\frac{F(u,u)}{\langle Ju,u\rangle}=r_-
\end{align*}
and for every $r$ in $[r_-,r_+]$ we have
$F(v,v)\ge r\langle Jv,v\rangle$ for each vector $v$.

In addition, if $F(\cdot,\cdot)$ is positive on
$C_0\setminus\{\vec0\}$, then $r_-<r_+$ and $F(v,v)>
r\langle Jv,v\rangle$ for all vectors $v$ and $r\in(r_-,r_+)$.
\end{lemma}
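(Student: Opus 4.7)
My plan is to reduce the inequality to a one-parameter analysis on the $2$-plane $W:=\operatorname{span}(v,u)$ spanned by arbitrary $v\in C_+$, $u\in C_-$. Writing $B$ for the symmetric polarization of $\J$, this pair is automatically linearly independent, and the matrix of $\J|_W$ has determinant $\J(v)\J(u)-B(v,u)^2<0$, so $\J|_W$ has signature $(1,1)$. Consequently the line $\{v+tu:t\in\RR\}$ meets $C_0$ at exactly two parameter values $t_1<0<t_2$ (the ordering follows because $\J(v)>0$ is opposite in sign to the leading coefficient $\J(u)<0$ of the quadratic $t\mapsto\J(v+tu)$), and the hypothesis $F\ge 0$ on $C_0$ supplies two boundary values for a second quadratic in $t$.

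To prove $r_+\ge r_-$ I would argue by contradiction. Assuming $F(v,v)/\J(v)<F(u,u)/\J(u)$, pick $r$ strictly between these ratios and set
\[
   \psi_r(t):=F(v+tu,v+tu)-r\,\J(v+tu).
\]
Since $\J(v)>0$ and $\J(u)<0$, both the constant term $\psi_r(0)=F(v,v)-r\J(v)$ and the leading coefficient $F(u,u)-r\J(u)$ are strictly negative, whereas $\psi_r(t_i)\ge 0$ for $i=1,2$ because $\J(v+t_iu)=0$ and we invoke the $C_0$-hypothesis on $v+t_iu$. A concave-down parabola non-negative at two points $t_1<0<t_2$ is non-negative on the entire interval $[t_1,t_2]$, contradicting $\psi_r(0)<0$. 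Taking the infimum over $v\in C_+$ and supremum over $u\in C_-$ yields $r_+\ge r_-$; the pointwise inequality $F(v,v)\ge r\J(v)$ for $r\in[r_-,r_+]$ is then immediate by splitting into the three cases $v\in C_+$, $v\in C_-$ (where the sign of $\J$ reverses the required inequality), and $v\in C_0$ (where the hypothesis applies directly).

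For the strict statement under $F>0$ on $C_0\setminus\{\vec0\}$, I would first show that $r_+$ and $r_-$ are attained. The key ingredient is coercivity: by compactness of the intersection of $C_0$ with the unit sphere together with the strict positivity hypothesis, there exists $c>0$ and a neighborhood of $C_0\cap\{\|w\|=1\}$ on which $F(w,w)\ge c$. Since $\J(w)\to 0$ as $w$ approaches $C_0$, the ratio $F/\J$ blows up to $+\infty$ inside $C_+$ near $C_0$, forcing the infimum of $F/\J$ on $C_+\cap\{\|w\|=1\}$ to be attained on a compact subset at some $v^*$; analogously $r_-$ is attained at some $u^*\in C_-$. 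Now suppose for contradiction that $r^*:=r_+=r_-$. Forming $\psi_{r^*}$ from $v^*$ and $u^*$, both $\psi_{r^*}(0)$ and the leading coefficient vanish by the attainment, so $\psi_{r^*}$ is at most linear in $t$; yet the strict $C_0$-hypothesis gives $\psi_{r^*}(t_i)>0$ for $i=1,2$, and no linear function vanishing at $0$ can be strictly positive at both $t_1<0$ and $t_2>0$. The contradiction forces $r_+>r_-$, after which the strict pointwise inequality $F(v,v)>r\J(v)$ for $r\in(r_-,r_+)$ and $v\neq\vec0$ follows as in the non-strict case with strict signs throughout.

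The main obstacle will be the strict case: the parabolic argument collapses exactly when $r_+=r_-$, because both the constant and the leading terms of $\psi_{r^*}$ vanish simultaneously at the attaining vectors. Overcoming this requires first a separate compactness and coercivity argument to guarantee that the extrema are attained, and then exploiting the rigidity of a nonzero linear function that must vanish at the origin yet be strictly positive on both sides of it.
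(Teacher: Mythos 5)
Your argument is correct, and it takes a genuinely different route from the paper's. Both proofs reduce to the two-plane spanned by a positive vector and a negative vector, but the paper normalizes $\J(v_0)=1$, $\J(u_0)=-1$ and rotates by an angle $\alpha$ chosen so that the two rotated vectors are both $\J$-null; since the rotation preserves the sum of the $F$-values, the hypothesis on $C_0$ contradicts $F(v_0,v_0)+F(u_0,u_0)<0$. You instead run along the line $t\mapsto v+tu$, note that $\J(v+tu)$ has one root on each side of $t=0$, and use concavity of the parabola $\psi_r$ together with its non-negativity at the two null parameters to contradict $\psi_r(0)<0$: the same two-dimensional reduction, with convexity replacing the trigonometric identity and no normalization needed. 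The more substantive difference is the strict case. The paper's ``argue in the same way'', which passes from $r_+\le r_-$ to witnesses $v_0,u_0$ with $F(v_0,v_0)+F(u_0,u_0)\le 0$, tacitly assumes the infimum and supremum are attained (or else needs an $\epsilon$-limiting argument with a uniform positive lower bound for $F$ at the produced null vectors); your compactness/coercivity step --- $F\ge c>0$ near $C_0\cap\{\|w\|=1\}$ forces $F/\J\to+\infty$ in $C_+$ near the null cone, so minimizing sequences stay in a compact part of $C_+$ --- supplies exactly this attainment, after which the degenerate (linear) $\psi_{r^*}$ vanishing at $t=0$ cannot be positive at both $t_1<0$ and $t_2>0$. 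So your proof buys a rigorous treatment of a point the paper leaves implicit, at the cost of a longer argument. You also spell out the three-case deduction of the pointwise bounds $F(v,v)\ge r\langle Jv,v\rangle$ (and their strict versions) from $r\in[r_-,r_+]$, which the paper states without proof; just note that the strict inequality must exclude $v=\vec0$, as you do.
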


\begin{proof}
This can be found in ~\cite{Wojtk01} and also
in~\cite{Pota79}. We present the simple proof here for
completeness.

Let us assume that the $F$ is non-negative on $C_0$ and
argue by contradiction: we also assume that
\begin{align}\label{eq:opposite}
  \inf_{v\in C_+} \frac{F(v,v)}{\langle Jv,v\rangle}
  < \sup_{u\in C_-}\frac{F(u,u)}{\langle Ju,u\rangle}.
\end{align}
Hence we can find $v_0\in C+$ and $u_0\in C_-$ with $\J(v_0)=1$
and $\J(u_0)=-1$ such that $F(v_0,v_0)+F(u_0,u_0)<0$. We can also
find an angle $\alpha$ such that both linear combinations
\begin{align*}
  v=v_0\cos\alpha+u_0\sin\alpha \qand
  w=-v_0\sin\alpha+u_0\cos\alpha
\end{align*}
belong to $C_0$. Then we must have $F(v,v)\ge0$ and
$F(w,w)\ge0$, but we also have
\begin{align*}
  F(v,v)+F(w,w)
  &=
  \cos^2\alpha\cdot F(v_0,v_0)+
  +\sin2\alpha\cdot F(u_0,v_0) + \sin^2\alpha \cdot
  F(u_0,u_0)
  \\
  &\quad+
  \sin^2\alpha\cdot F(v_0,v_0)-\sin2\alpha\cdot F(u_0,v_0)+
  \cos^2\alpha\cdot F(u_0,u_0)
  \\
  &=
  F(v_0,v_0)+F(u_0,u_0)<0
\end{align*}
and this contradiction shows that the opposite of
(\ref{eq:opposite}) must be true.

Analogously, if $F$ is positive on $C_0\setminus\{\vec0\}$,
then we can argue in the same way: we assume that
(\ref{eq:opposite}) is true with $\le$ in the place of $<$;
we obtain $F(v_0,v_0)+F(u_0,u_0)\le0$ and then construct
$v,w$ such that $F(v,v)+F(w,w)>0$; and, finally, we show
that $F(v,v)+F(w,w)= F(v_0,v_0)+F(u_0,u_0)\le0$ to arrive
again at a contradiction.
\end{proof}

\begin{remark}
  \label{rmk:Jseparated}
  Lemma~\ref{le:kuhne} shows that if $F(v,w)=\langle \tilde J
  v,w\rangle$ for some self-adjoint operator $\tilde J$ and
  $F(v,v)\ge0$ for all $v$ such that $\langle J v,
  v\rangle=0$, then we can find $a\in\RR$ such that
  $\tilde J \ge a J$. This means precisely that $\langle
  \tilde J v,v\rangle\ge a\langle Jv, v\rangle$ for all
  $v$.

  If, in addition, we have $F(v,v)>0$ for all $v$ such that
  $\langle J v, v\rangle=0$, then we obtain a strict
  inequality $\tilde J > a J$ for some $a\in\RR$ since the
  infimum in the statement of Lemma~\ref{le:kuhne} is
  strictly bigger than the supremum.
\end{remark}

The (longer) proofs of the following results can be found
in~\cite{Wojtk01} or in~\cite{Pota79}; see also~\cite{Wojtk09}.

\begin{proposition}
  \label{pr:J-separated-spectrum}
  Let $L:V\to V$ be a $\J$-separated linear operator. Then
  \begin{enumerate}
  \item $L$ can be uniquely represented by $L=RU$, where $U$
    is a $\J$-isometry (i.e. $\J(U(v))=\J(v), v\in V$) and
    $R$ is $\J$-symmetric (or $\J$-pseudo-adjoint; see
    Proposition~\ref{pr:propbilinear}) with positive
    spectrum.
  \item the operator $R$ can be diagonalized by a
    $\J$-isometry. Moreover the eigenvalues of $R$ satisfy
    \begin{align*}
      0<r_-^q\le\dots\le r_-^1=r_-\le r_+=r_1^+\le\dots\le r_+^p.
    \end{align*}
  \item the operator $L$ is (strictly) $\J$-monotonous if,
    and only if, $r_-\le (<) 1$ and $r_+\ge (>) 1$.
  \end{enumerate}
\end{proposition}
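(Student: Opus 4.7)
The plan is to establish a polar-type decomposition adapted to the pseudo-Euclidean structure defined by $\J$, and then read the monotonicity criterion off the resulting spectral picture.

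For part~(1), I would consider $R^2 := L^+ L$, where $L^+$ is the $\J$-pseudo-adjoint furnished by Proposition~\ref{pr:propbilinear}(5). Since $(L^+ L)^+ = L^+(L^+)^+ = L^+ L$, the operator $R^2$ is $\J$-pseudo-self-adjoint, and since $L$ is invertible $R^2$ has no zero eigenvalue. The crucial step is to show that $R^2$ has only positive real eigenvalues and is diagonalizable in a $\J$-orthogonal basis; this is precisely where the $\J$-separation of $L$ enters. For any eigenvector $v$ with $R^2 v = \mu v$, one computes
\begin{align*}
\J(Lv) = \langle J L^+ L v, v\rangle = \mu\,\J(v),
\end{align*}
and combining the invariance $L(C_+) \subset C_+$ with the symmetric property of $L^{-1}$ relative to $-\J$ (Remark~\ref{rmk:J-separated-C-}) forces $\mu > 0$ on every $\J$-definite eigenvector and excludes complex eigenvalues and non-trivial Jordan structure. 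Once diagonalizability with positive spectrum is established, define $R$ as the unique $\J$-pseudo-self-adjoint square root of $R^2$ whose spectrum lies in $(0,\infty)$ (take positive square roots eigenspace-by-eigenspace), and set $U := R^{-1} L$. Then
\begin{align*}
U^+ U = L^+ (R^{-1})^+ R^{-1} L = L^+ (R^2)^{-1} L = \mathrm{Id},
\end{align*}
so $U$ is a $\J$-isometry, proving existence; uniqueness follows from uniqueness of the positive $\J$-symmetric square root.

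For part~(2), Proposition~\ref{pr:propbilinear}(6a) gives that eigenspaces of $R$ corresponding to distinct eigenvalues are pseudo-orthogonal, and the diagonalization from part~(1) together with Proposition~\ref{pr:propbilinear}(2) produces a $\J$-orthonormal eigenbasis $\{v_1,\ldots,v_n\}$ with $\J(v_i) = -1$ for $i \le q$ and $\J(v_i) = +1$ for $i > q$. If $R v_i = \lambda_i v_i$ then $\J(L v_i)/\J(v_i) = \lambda_i^2$, so the infimum of $\J(Lv)/\J(v)$ on $C_+$ is realized at the eigenvector with smallest positive-direction eigenvalue and equals $r_+$, while the supremum on $C_-$ is realized at the eigenvector with largest negative-direction eigenvalue and equals $r_-$. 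Reindexing the eigenvalues in increasing order on each signature block yields the claimed chain.

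For part~(3), using $L = RU$ and that $U$ is a $\J$-isometry, $\J$-monotonicity of $L$ is equivalent to $\J(Rw) \ge \J(w)$ for every $w \in V$. Expanding $w = \sum_i \alpha_i v_i$ in the eigenbasis of part~(2) one gets
\begin{align*}
\J(Rw) - \J(w) = \sum_{j > q}(\lambda_j^2 - 1)\alpha_j^2 - \sum_{i \le q}(\lambda_i^2 - 1)\alpha_i^2,
\end{align*}
which is non-negative for every $w$ if and only if $\lambda_i \le 1$ for all $i \le q$ and $\lambda_j \ge 1$ for all $j > q$; by the ordering of part~(2) this is equivalent to $r_- \le 1$ and $r_+ \ge 1$. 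Strictness everywhere gives the strict version. The main obstacle will be the spectral step in part~(1): a $\J$-pseudo-self-adjoint operator on an indefinite inner product space need not be diagonalizable in general and may exhibit complex eigenvalues or Jordan blocks anchored on the zero cone $C_0$; the $\J$-separation of $L$ is what simultaneously rules out both phenomena, after which the rest is bookkeeping.
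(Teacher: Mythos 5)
Your overall strategy (set $R^2:=L^+L$, extract a $\J$-symmetric square root with positive spectrum, put $U:=R^{-1}L$, then read items (2)--(3) off a $\J$-orthonormal eigenbasis) is exactly the classical Potapov--Wojtkowski route; note the paper does not prove this proposition at all, but defers to \cite{Wojtk01,Pota79,Wojtk09}. The problem is that the entire mathematical content of item (1) sits in the step you leave as an assertion: that $\J$-separation forces $L^+L$ to be real-diagonalizable with positive spectrum. A $\J$-pseudo-self-adjoint operator on an indefinite space can perfectly well have complex eigenvalues or Jordan blocks, and you offer no mechanism that excludes them --- your closing sentence concedes this is ``the main obstacle'', so the proof is not actually there. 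Even the fragment you do argue is flawed: for an eigenvector with $\J(v)<0$ you appeal to Remark~\ref{rmk:J-separated-C-}, but that remark concerns strictly $\J$-separated \emph{flows} and their time reversal; for a single $\J$-separated operator, $L(C_+)\subset C_+$ does \emph{not} imply $L(C_-)\subset C_-$. For instance, on $\RR^2$ with $\J(x,y)=y^2-x^2$, the map $L(x,y)=(\epsilon x,\,y+\tfrac12 x)$ with $0<\epsilon<\tfrac12$ is $\J$-separated yet sends the negative vector $(1,0.9)$ into $C_+$; so the conclusion $\mu>0$ on negative definite eigenvectors of $R^2$ needs a genuine argument (as does uniqueness of the positive $\J$-symmetric square root, which you also assert). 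This is precisely the nontrivial part that \cite{Pota79} and \cite{Wojtk09} are cited for; either reproduce such an argument or cite it --- as written, item (1) is unproved.

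Granting item (1), your (2) and (3) are essentially correct bookkeeping, with two repairs. First, the Rayleigh-type quantities you compute satisfy $\J(Lv_i)/\J(v_i)=\lambda_i^2$, so the infimum over $C_+$ and supremum over $C_-$ compare with the \emph{squares} $r_\pm^2$, not with $r_\pm$. Second, the interlacing $r_-\le r_+$ is not mere ``reindexing'' and your variational phrasing (``the infimum is realized at the eigenvector with smallest positive-direction eigenvalue'') already presupposes that ordering; the clean way is to apply Lemma~\ref{le:kuhne} to $F(v,v)=\J(Lv)$, which is non-negative on $C_0$ by continuity of $\J$-separation, to get $\lambda_i^2\le\sup_{C_-}F/\J\le\inf_{C_+}F/\J\le\lambda_j^2$ for every $i\le q<j$. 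With that, the equivalence in (3), via $\J(Lv)=\J(RUv)$ and $\J$-isometry of $U$, is fine.
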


For a $\J$-separated operator $L:V\to V$ and a
$d$-dimensional subspace $F_+\subset C_+$, the subspaces
$F_+$ and $L(F_+)\subset C_+$ have an inner product given by
$\J$. Thus both subspaces are endowed with volume
elements. Let $\alpha_d(L;F_+)$ be the rate of expansion of
volume of $L\mid_{F_+}$ and $\sigma_d(L)$ be the infimum of
$\alpha_d(L;F_+)$ over all $d$-dimensional subspaces $F_+$
of $C_+$.

\begin{proposition}
  \label{pr:product-vol-exp}
  We have $\sigma_d(L)=r_+^1 \cdots r_+^d$, where $r^i_+$
  are given by Proposition~\ref{pr:J-separated-spectrum}(2).

  Moreover, if $L_1,L_2$ are $\J$-separated, then
  $\sigma_d(L_1L_2)\ge\sigma_d(L_1)\sigma_d(L_2)$.
\end{proposition}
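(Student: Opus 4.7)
The plan is to prove the volume formula by reducing to a diagonal $\J$-symmetric operator via the polar-type decomposition from Proposition~\ref{pr:J-separated-spectrum}(1)--(2), then executing a pseudo-Euclidean Courant--Fischer argument to obtain matching upper and lower bounds, and finally deducing multiplicativity as a short consequence of the chain rule. Write $L=RU$ with $U$ a $\J$-isometry and $R$ $\J$-symmetric with positive spectrum. Since $U$ preserves $\J$ it bijects the set of $d$-dimensional subspaces of $C_+$ with itself and preserves $\J$-volume, so $\alpha_d(L;F_+)=\alpha_d(R;U(F_+))$ and $\sigma_d(L)=\sigma_d(R)$. Fix a $\J$-orthonormal eigenbasis $\{e_i^+\}_{i=1}^p\cup\{e_j^-\}_{j=1}^q$ with $\J(e_i^+,e_i^+)=+1$, $\J(e_j^-,e_j^-)=-1$, $Re_i^+=r_+^i e_i^+$ and $Re_j^-=r_-^j e_j^-$; set $V^\pm:=\operatorname{span}\{e_i^\pm\}$.

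The upper bound $\sigma_d(R)\le r_+^1\cdots r_+^d$ is immediate by evaluating on the test subspace $F_0=\operatorname{span}\{e_1^+,\ldots,e_d^+\}\subset V^+\subset C_+$, on which $R$ is diagonal with eigenvalues $r_+^1,\ldots,r_+^d$ in a $\J$-orthonormal basis. For the lower bound fix any $d$-dim $F_+\subset C_+$; since $R$ is invertible and $\J$-separated, the bilinear form $Q(v,w):=\J(Rv,Rw)$ is positive definite on $F_+$, and in any $\J$-orthonormal basis of $F_+$ its Gram matrix has determinant $\alpha_d(R;F_+)^2$. The eigenvalues $\mu_1\le\cdots\le\mu_d$ of $Q$ relative to $\J|_{F_+}$ therefore satisfy $\alpha_d(R;F_+)^2=\mu_1\cdots\mu_d$ and admit the Courant--Fischer characterization
\begin{align*}
\mu_k=\min_{\substack{W\subset F_+\\ \dim W=k}}\max_{v\in W\setminus\{0\}}\frac{\J(Rv,Rv)}{\J(v,v)}.
\end{align*}
To establish $\mu_k\ge(r_+^k)^2$, given any $k$-dim $W\subset F_+$ consider the linear map $W\to\operatorname{span}\{e_1^+,\ldots,e_{k-1}^+\}$ sending $v$ to the first $k-1$ coordinates of its $\J$-orthogonal projection onto $V^+$. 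Its kernel has dimension at least one; pick a nonzero $v$ there and write $v^+=\sum_{i\ge k}a_i e_i^+$, $v^-=\sum_j b_j e_j^-$. Using that $V^+$ and $V^-$ are $\J$-orthogonal and $R$-invariant, together with the crucial chain $r_-^j\le r_-^1\le r_+^1\le r_+^k$, we get
\begin{align*}
\J(Rv,Rv)=\sum_{i\ge k}(r_+^i)^2a_i^2-\sum_j(r_-^j)^2b_j^2\ge (r_+^k)^2\Bigl(\sum_{i\ge k}a_i^2-\sum_j b_j^2\Bigr)=(r_+^k)^2\J(v,v).
\end{align*}
Hence $\mu_k\ge(r_+^k)^2$ for each $k$, and multiplying over $k$ gives $\alpha_d(R;F_+)\ge r_+^1\cdots r_+^d$, matching the upper bound.

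For the multiplicativity, since both $L_i$ are $\J$-separated we have $L_2(F_+)\subset C_+$ whenever $F_+\subset C_+$, so $L_2(F_+)$ is again a $d$-dim subspace of $C_+$ and the chain rule
\begin{align*}
\alpha_d(L_1L_2;F_+)=\alpha_d(L_1;L_2(F_+))\cdot\alpha_d(L_2;F_+)\ge\sigma_d(L_1)\sigma_d(L_2)
\end{align*}
yields $\sigma_d(L_1L_2)\ge\sigma_d(L_1)\sigma_d(L_2)$ after taking infimum over $F_+$. The main obstacle is the lower-bound step of the first statement: the Rayleigh quotient must be controlled on subspaces $F_+$ that may have arbitrary components in $V^-$, and it is precisely the position of $r_+^k$ above every eigenvalue of $R$ on $V^-$ that lets the negative contributions be absorbed into the desired bound through the kernel-dimension-counting pigeonhole.
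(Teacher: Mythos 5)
Your proof is correct. Note that the paper itself does not prove Proposition~\ref{pr:product-vol-exp} at all: it is quoted from Wojtkowski~\cite{Wojtk01} (see also Potapov~\cite{Pota79}), with the remark that the longer proofs are to be found there. Your argument is a complete, self-contained derivation from Proposition~\ref{pr:J-separated-spectrum} and is essentially the route taken in the cited source: reduce to the $\J$-symmetric factor $R$ via the polar decomposition (the $\J$-isometry $U$ permutes the $d$-dimensional subspaces of $C_+$ and preserves $\J$-volume), get the upper bound from the test subspace $\gen\{e_1^+,\dots,e_d^+\}$, and get the matching lower bound by a min--max argument for the form $Q(v,w)=\J(Rv,Rw)$ relative to the positive definite form $\J|_{F_+}$, where the inequality $r_-^j\le r_+^k$ from the ordering in Proposition~\ref{pr:J-separated-spectrum}(2) absorbs the $V^-$-components of the test vector produced by the dimension count. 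Two points you use implicitly and could state in one line each: $R$ itself is $\J$-separated (e.g. because $R=LU^{-1}$ with $U^{-1}$ a $\J$-isometry, or directly from the eigenvalue estimate $\J(Rv)\ge (r_-^1)^2\J(v)$ for $\J(v)>0$), which is what makes $Q$ positive definite on $F_+$; and $L_2$ is invertible (it is, being of the form $RU$ with both factors invertible), so $L_2(F_+)$ is genuinely a $d$-dimensional subspace of $C_+$ and the chain rule for the $\J$-volume ratios applies. With these remarks the argument is complete and correct, including the multiplicativity statement.
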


The following corollary is very useful.

\begin{corollary}
  \label{cor:compos-max-exp}
  For $\J$-separated operators $L_1,L_2:V\to V$ we have
  \begin{align*}
    r_+^1(L_1L_2)\ge r_+^1(L_1) r_+^1(L_2) \quad\text{and}\quad
    r_-^1(L_1L_2)\le r_-^1(L_1)r_-^1(L_2).
  \end{align*}
  Moreover, if the operators are strictly $\J$-separated,
  then the inequalities are strict.
\end{corollary}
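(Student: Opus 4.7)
The first inequality follows directly from Proposition~\ref{pr:product-vol-exp} specialized to $d=1$: the formula $\sigma_d(L)=r_+^1\cdots r_+^d$ reduces at $d=1$ to $\sigma_1(L)=r_+^1(L)$, and the submultiplicativity $\sigma_1(L_1L_2)\ge \sigma_1(L_1)\sigma_1(L_2)$ yields the first inequality at once. If both $L_i$ are strictly $\J$-separated, the strict form of the same inequality is obtained by feeding the strict clause of Lemma~\ref{le:kuhne} through the proof of Proposition~\ref{pr:product-vol-exp}.

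For the second inequality my plan is to reduce it to the first by passing to inverses together with the opposite form $-\J$. Since $C_+^{-\J}=C_-^{\J}$, the argument of Remark~\ref{rmk:J-separated-C-} shows that strict $\J$-separation of $L$ translates into strict $(-\J)$-separation of $L^{-1}$. Using the polar decomposition $L=RU$ of Proposition~\ref{pr:J-separated-spectrum}(1), I would rewrite
\begin{align*}
  L^{-1}=U^{-1}R^{-1}=\bigl(U^{-1}R^{-1}U\bigr)\,U^{-1}=:\tilde R\,\tilde U
\end{align*}
and verify that this is the polar decomposition of $L^{-1}$ with respect to $-\J$: the factor $\tilde U=U^{-1}$ is a $(-\J)$-isometry and $\tilde R$ is $(-\J)$-selfadjoint with positive spectrum, whose eigenvalues are the reciprocals of those of $R$ but with the roles of the maximal $\J$-positive and $\J$-negative subspaces exchanged. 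Reading off the ordering in Proposition~\ref{pr:J-separated-spectrum}(2) applied to $-\J$, the smallest eigenvalue of $\tilde R$ on the $(-\J)$-positive subspace becomes $1/r_-^1(L)$, giving the duality
\begin{align*}
  r_+^{-\J,1}(L^{-1})=\frac{1}{r_-^1(L)}.
\end{align*}

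With this identity in hand, applying the already-proved first inequality to $L_2^{-1},L_1^{-1}$ with respect to $-\J$ and using $(L_2^{-1}L_1^{-1})^{-1}=L_1L_2$ gives
\begin{align*}
  \frac{1}{r_-^1(L_1L_2)}
  = r_+^{-\J,1}\bigl(L_2^{-1}L_1^{-1}\bigr)
  \ge r_+^{-\J,1}(L_2^{-1})\,r_+^{-\J,1}(L_1^{-1})
  = \frac{1}{r_-^1(L_1)\,r_-^1(L_2)},
\end{align*}
which rearranges to the desired inequality. The strict case is again obtained by invoking the strict form of the first inequality in the corresponding step.

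The main obstacle is twofold. First, one has to justify the duality $r_+^{-\J,1}(L^{-1})=1/r_-^1(L)$ with correct index matching, which reduces to tracking how the ordering in Proposition~\ref{pr:J-separated-spectrum}(2) behaves when $\J$ is replaced by $-\J$ (swapping the indices of the $r_-$ and $r_+$ blocks). Second, the argument via $(-\J)$-separation of $L^{-1}$ requires strictness; to cover the non-strict case I would perturb $L_i$ to strictly $\J$-separated operators and pass to the limit, using that $r_\pm^1$ depend continuously on the operator through the polar factor $R$ of Proposition~\ref{pr:J-separated-spectrum}(1). This approximation step is where the bulk of the technical care is needed.
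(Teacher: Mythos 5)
Your proposal is correct, and it is essentially the route the paper intends: the paper itself offers no proof of this corollary, deferring it (together with Propositions~\ref{pr:J-separated-spectrum} and~\ref{pr:product-vol-exp}) to \cite{Wojtk01} and \cite{Pota79}, and the natural derivation is exactly yours --- the case $d=1$ of Proposition~\ref{pr:product-vol-exp} for the $r_+^1$ inequality, and the $\pm\J$ duality under inversion (the single-operator analogue of Remarks~\ref{rmk:J-separated-C-} and~\ref{rmk:strictly-J-separated}) for the $r_-^1$ inequality. Your duality computation checks out: with $L=RU$, the operator $\tilde R=U^{-1}R^{-1}U$ is pseudo-selfadjoint with positive spectrum because $U^+=U^{-1}$ and $(R^{-1})^+=R^{-1}$, its $\J$-negative (hence $(-\J)$-positive) eigenspace is the image under $U^{-1}$ of that of $R$ and carries the eigenvalues $1/r_-^i(L)$, so the ordering in Proposition~\ref{pr:J-separated-spectrum}(2) gives $r_+^{-\J,1}(L^{-1})=1/r_-^1(L)$, and applying the first inequality to $L_2^{-1},L_1^{-1}$ with respect to $-\J$ yields the second. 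Two comments. First, the approximation step you single out as the technical bulk is unnecessary: plain (non-strict) $\J$-separation of $L$ already gives non-strict $(-\J)$-separation of $L^{-1}$, by the same closure argument used in the proof of Theorem~\ref{thm:J-separated-tildeJ}(2) --- $\J$-separation forces $\J(Lw)\ge0$ for every $w\in C_0$, so $\J(v)<0$ together with $\J(L^{-1}v)\ge 0$ is impossible; hence you never need strictness for the duality, nor the continuity of $r_\pm^1$ in the operator that your limiting argument would require. Second, you implicitly use invertibility of $L_1,L_2$; this is harmless in context (the operators arise as values of cocycles of linear bijections, and the decomposition $L=RU$ of Proposition~\ref{pr:J-separated-spectrum}(1) forces invertibility anyway), but it deserves a word. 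The only point genuinely left open is the strictness of the first inequality, which you delegate to the (external) proof of Proposition~\ref{pr:product-vol-exp}; since the paper does exactly the same, this is acceptable, though a fully self-contained argument would still have to supply that step.
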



\subsection{$\J$-separated linear cocycles over flows}
\label{sec:j-separat-linear-1}

The results in the previous subsection provide the following
characterization of $\J$-separated cocycles $A_t(x)$ over a
flow $X_t$ in terms of the infinitesimal generator $D(x)$ of
$A_t(x)$; see~\eqref{eq:infinitesimal-gen}. The following
statement is more precise than
Proposition~\ref{pr:J-separated}.

Let $A_t(x)$ a linear multiplicative cocycles over a flow $X_t$. We define
the infinitesimal generator of $A_t(x)$ by
\begin{align}\label{eq:infinitesimal-gen}
  D(x):=\lim_{t\to0}\frac{A_t(x)-Id}t.
\end{align}

\begin{theorem}
  \label{thm:J-separated-tildeJ}
  Let $X_t$ be a flow defined on a positive invariant subset
  $U$, $A_t(x)$ a cocycle over $X_t$ on $U$ and $D(x)$ its
  infinitesimal generator. Then
  \begin{enumerate}
  \item $\partial_t \J(A_t(x)v) = \langle \tilde J_{X_t(x)}
    A_t(x)v,A_t(x)v\rangle$ for all $v\in E_x$ and $x\in U$,
    where
    \begin{align}\label{eq:J-separated-tildeJ}
      \tilde J_x:= J\cdot D(x) + D(x)^* \cdot J
    \end{align}
    and $D(x)^*$ denotes the adjoint of the linear map
    $D(x):E_x\to E_x$ with respect to the adapted inner
    product at $x$;
  \item the cocycle $A_t(x)$ is $\J$-separated if, and only
    if, there exists a neighborhood $V$ of $\Lambda$,
$V\subset U$ and a function $\delta:V\to\RR$ such that
    \begin{align}\label{eq:J-ge}
      \tilde J_x\ge\delta(x) J
      \quad\text{for all}\quad x\in V.
    \end{align}
    In particular we get 
  $\partial_t\log|\J(A_t(x)v)|\ge\delta(X_t(x)), x\in V,
  t\ge0, v\in E_x$ and $\J(v)>0$; or
  $\partial_t\log|\J(A_t(x)v)|\le\delta(X_t(x)), x\in V,
  t\ge0, v\in E_x$ and $\J(v)<0$;
  \item if the inequalities in the previous item are strict,
    then the cocycle $A_t(x)$ is strictly
    $\J$-separated. Reciprocally, if $A_t(x)$ is strictly
    $\J$-separated, then there exists compatible field
    of forms $\J_0$ on $V$ satisfying the strict
    inequalities of item (2).
  \item Define the function
    \begin{align}\label{eq:delta-area}
     \Delta_s^t(x):=\int_s^t\delta(X_s(x))\,ds.
    \end{align}
    For a $\J$-separated cocycle $A_t(x)$, we have
    $\frac{|\J(A_{t_2}(x)v)|}{|\J(A_{t_1}(x)v)|}\ge \exp
    \Delta_{t_1}^{t_2}(x)$ for all $v\in E_x$ and reals
    $t_1<t_2$ so that $\J(A_t(x)v)\neq0$ for all $t_1\le
    t\le t_2$.
\item  we can bound $\delta$ at every $x\in\Gamma$
  by $ \sup_{v\in C_-(x)}\frac{\J'(v)}{\J(v)}
  \le\delta(x)\le \inf_{v\in
    C_+(x)}\frac{\J'(v)}{\J(v)}.$
  \end{enumerate}
\end{theorem}

\begin{remark}
  \label{rmk:semipositive}
  If $\delta(x)=0$, then $\tilde \J_x$ is positive
  semidefinite operator. But for $\delta(x)\neq0$ the symmetric operator
  $\tilde \J_x$ might be an indefinite quadratic form.
\end{remark}

\begin{remark}
  \label{rmk:reciprocal-item3}
  The necessary condition in item (3) of
  Theorem~\ref{thm:J-separated-tildeJ} is proved in
  Section~\ref{sec:partial-hyperb-impli} after
  Theorem~\ref{thm:necessity} and
  Proposition~\ref{pr:strict-J-sep}.
\end{remark}

\begin{remark}
  \label{rmk:axcont}
  We can take $\delta(x)$ as a continuous function of
  the point $x\in U$ by the last item of
  Theorem~\ref{thm:J-separated-tildeJ}.
\end{remark}

\begin{remark}
  \label{rmk:strictly-J-separated}
  Complementing Remark~\ref{rmk:J-separated-C-}, the
  necessary and sufficient condition in items (2-3) of
  Theorem~\ref{thm:J-separated-tildeJ}, for (strict)
  $\J$-separation, shows that a cocycle $A_t(x)$ is
  (strictly) $\J$-separated if, and only if, its inverse
  $A_{-t}(x)$ is (strictly) $(-\J)$-separated.
\end{remark}

\begin{remark}
  \label{rmk:Jexp-ineq}
  Item (5)  of
  Theorem~\ref{thm:J-separated-tildeJ}
  shows that $\delta$ is a measure of the ``minimal
  instantaneous expansion rate'' of $|\J\circ A_t(x)|$
  on positive vectors, but also
  a ``maximal instantaneous expansion
  rate'' of $|\J\circ A_t(x)|$ on negative vectors.

  Hence, the behavior of the area under the function
  $\delta$, given by
  $\Delta_s^t(x)$ as $t-s$ tends
  to $\pm\infty$, defines the type of partial
  hyperbolic splitting (with contracting or expanding
  subbundles) exhibited by a strictly $\J$-separated
  cocycle.

  In this way we have conditions ensuring partial
  hyperbolicity of an invariant subset involving only the
  spatial derivative map of the vector field; see Theorem
  \ref{thm:char-dom-split} in
  Section~\ref{sec:charact-splitt-throu}.
\end{remark}

\begin{proof}[Proof of Theorem~\ref{thm:J-separated-tildeJ}]
The map $\psi(t,v):=\langle J A_t(x) v,A_t(x) v\rangle$ is
smooth and for $v\in E_x$ satisfies
 \begin{align*}
  \partial_t\psi(t,v) &= \langle \big(J\cdot
  D(X_t(x))\big)A_t(x) v, A_t(x) v \rangle
  \\
  &\quad + \langle J\cdot A_t(x) v , D(X_t(x)) A_t(x) v \rangle
  \\
  &= \langle \big(J\cdot D(X_t(x)) + D(X_t(x))^* \cdot J
  \big)A_t(x) v, A_t(x) v \rangle,
\end{align*}
where we have used the fact that the cocycle has an
infinitesimal generator $D(x)$: we have the relation
\begin{align}\label{eq:linear-variational}
  \partial_t A_t(x) v = D(X_t(x))\cdot A_t(x) v
  \quad\text{for all
  $t\in\RR, x\in M$ and $v\in E_x$.}
\end{align}
This is because we have
the linear variation equation: $A_t(x)$ is the solution of
the following non-autonomous linear equation
\begin{align}\label{eq:non-homo-ode}
  \begin{cases}
    \dot{Y}=D(X_t(x))Y\\
    Y(0)=Id
  \end{cases}.
\end{align}
We note that the argument does not change for $x=\sigma$ a singularity of $X_t$.

This proves the first item of the statement of the theorem.

We observe that the independence of $J$ from $X_t(x)$ is a
consequence of the choice of adapted coordinates and inner
product, since in this setting the operator $J$ is
fixed. However, in general, this demands the rewriting of
the cocycle in the coordinate system adapted to $\J$.

For the second item, let us assume that $A_t(x)$ is
$\J$-separated on $U$. Then, by definition, if we fix $x\in
U$
\begin{align}\label{eq:def-J-separated}
  \langle J A_t(x) v,A_t(x) v\rangle > 0 \quad \text{for all
    $t>0$ and all $v\in E_x$ such that $\langle
    Jv,v\rangle>0$}.
\end{align}
We also note that, by continuity, we have $\langle J A_t(x)
v,A_t(x) v\rangle \ge 0$ for all $v$ such that $\langle
Jv,v\rangle=0$. Indeed, for any given $t>0$ and $v\in C_0$
we can find $w\in C_+$ such that $v+w\in C_+$. Then we have
$\langle J A_t(x) (v+\lambda w),A_t(x) (v+\lambda w)\rangle
> 0$ for all $\lambda>0$, which proves the claim letting
$\lambda$ tend to $0$.

The map $\psi(t,v)$ satisfies $\psi(0,v)=0\le\psi(t,v)$ for
all $t>0$ and $v\in C_0(x)$, hence from the first item
already proved
 \begin{align*}
  0 \le
  \partial_t\psi(t,v)\mid_{t=0}
  = \langle \big(J\cdot D(x) + D(x)^* \cdot J \big) v, v
  \rangle.
\end{align*}

According to Lemma~\ref{le:kuhne} (cf. also
Remark~\ref{rmk:Jseparated}) there exists $\delta(x)\in\RR$
such that \eqref{eq:J-ge} is true and this, in turn, implies
that $\partial_t\J(A_t(x)v)\ge \delta(x)\J(A_t(x)v)$, for
all $v\in E_x, x\in U, t\ge0$. This completes the proof of
necessity in the second item.

Moreover, from Lemma~\ref{le:kuhne} we
have that $\delta(x)$ satisfies the inequalities in
item (5).

To see that this is a sufficient condition for
$\J$-separation, let $\tilde \J_x\ge \delta(x)\J$, for some function
$\delta:U\to\RR$. Then, for all $v\in E_x$ such that $\langle J
v, v\rangle>0$, since $\partial_t\J(A_t(x)v) \ge \delta(X_t(x))
\J(A_t(x)v)$, we obtain
\begin{align}\label{eq:Jexponential-inequality}
  |\J(A_t(x)v)|\ge |\J(v)|\exp\left( \int_0^t
    \delta(X_s(x))\,ds\right) = |\J(v)|\exp \Delta(x,t) >0 , \quad t\ge0,
\end{align}
and $\J(A_t(x)v)>0$ for all $t>0$ by continuity. This shows
that $A_t(x)$ is $\J$-separated. This completes the proof of
the second item in the statement of the theorem.

For the third item, we only prove the first statement and
leave the longer proof of the second statement for
Section~\ref{sec:partial-hyperb-impli} in
Proposition~\ref{pr:strict-J-sep}.  If $\tilde J_x>
\delta(x)J$ for all $x\in U$, then for $t>0$ we
obtain~(\ref{eq:Jexponential-inequality}) with strict
inequalities for $v\in C_0(x)$, hence $\J(A_t(x)v)>0$ for
$t>0$.  So $A_t(x)$ is strictly $\J$-separated.

For the fourth item, we just itegrate the inequality of item
(2) from $s$ to $t$ in the real line. The proof is complete.

\end{proof}

In Section~\ref{sec:charact-splitt-throu} we show that the
asymptotic behavior of the function $\Delta_s^t(x)$
as $t-s$ grows to $\pm\infty$ defines the type of partial
hyperbolic splitting exhibited by a strictly $\J$-separated
cocycle.

In this way we have a condition ensuring partial and uniform hyperbolicity of
an invariant subset involving only the vector field and its
spatial derivative map.




\subsection{Strict $\J$-separated cocycles and domination}
\label{sec:j-separat-cocycl}

We assume from now on that a family $A_t(x)$ of linear
multiplicative cocycles on a vector bundle $E_U$ over the
flow $X_t$ on a trapping region $U\subset M$ has been given,
together with a field of non-degenerate quadratic forms $\J$ 
on $E_U$ with constant index $q<\dim E_U$.

\begin{theorem}
  \label{thm:strict-J-separated-cocycle}
  The cocyle $A_t(x)$ is strictly
  $\J$-separated if, and only if, $E_U$ admits a
  dominated splitting $F_-\oplus F_+$ with
  respect to $A_t(x)$ on the maximal invariant subset
  $\Lambda$ of $U$, with constant
  dimensions $\dim F_-=q, \dim F_+=p, \dim M = p+q $.
\end{theorem}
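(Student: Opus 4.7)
The argument has two directions. For the sufficiency part (dominated splitting $\Rightarrow$ strict $\J$-separation), I would start from the dominated splitting $F_-\oplus F_+$ on $\Lambda$, extend it continuously to a small neighborhood $V\subset U$, and exploit the usual fact that after replacing $A_t$ with $A_T$ for $T$ large we may assume the domination constant $K$ equals $1$: there is an adapted metric on $V$ such that $\|A_T|_{F_-(x)}\|\cdot\|A_{-T}|_{F_+(X_T(x))}\|<\lambda<1$. Define
\begin{align*}
  \J_x(v_-+v_+):=\|v_+\|_x^2-\|v_-\|_x^2, \quad v_\pm\in F_\pm(x).
\end{align*}
Then $\J$ is non-degenerate of constant index $q=\dim F_-$. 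If $\J_x(v)\ge0$ then $\|v_-\|\le\|v_+\|$, and the adapted domination inequality yields $\|A_T v_-\|<\|A_T v_+\|$, so $\J_{X_T(x)}(A_T v)>0$. This gives strict $\J$-separation for $A_T$, from which one recovers strict $\J$-separation for all $t>0$ (since the cone property is forward-invariant).

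For necessity, assume $A_t$ is strictly $\J$-separated. I would use the classical cone-limit construction: set
\begin{align*}
  F_+(x):=\bigcap_{t\ge0}A_t(X_{-t}(x))\cdot\overline{C_+(X_{-t}(x))},
  \quad
  F_-(x):=\bigcap_{t\ge0}A_{-t}(X_t(x))\cdot\overline{C_-(X_t(x))}.
\end{align*}
By strict $\J$-separation and Remark \ref{rmk:strictly-J-separated} applied to $-\J$, each family is a decreasing nested sequence of closed sets, and invariance $A_t(x)F_+(x)=F_+(X_t(x))$ (and similarly for $F_-$) is automatic from the intersection definitions. To show $F_+(x)$ is a linear subspace of dimension exactly $p=\dim E-q$, I would use Proposition \ref{pr:J-separated-spectrum}: for each $t>0$, the $\J$-symmetric factor $R(x,t)$ in the polar decomposition $A_t(x)=R(x,t)U(x,t)$ has a $p$-dimensional invariant subspace inside $\overline{C_+}$ spanned by its $r_+^j$-eigenvectors; the nested intersection of such subspaces stabilizes to a $p$-dimensional subspace. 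Transversality $F_+\cap F_-=\{0\}$ follows because any $v$ in the intersection lies in $\overline{C_+}\cap\overline{C_-}\subset C_0$, and strict separation forward and backward forces $v=0$.

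The final step is the domination inequality. Strict $\J$-separation of $A_{t_0}(x)$ is equivalent (via Proposition \ref{pr:J-separated-spectrum}) to $r_-^1(A_{t_0}(x))<r_+^1(A_{t_0}(x))$. By continuity of $x\mapsto A_{t_0}(x)$ and compactness of $\Lambda$, there exists $\mu<1$ with $r_-^1(A_{t_0}(x))/r_+^1(A_{t_0}(x))\le\mu$ uniformly on $\Lambda$. Corollary \ref{cor:compos-max-exp} applied to the composition $A_{nt_0}(x)=A_{t_0}(X_{(n-1)t_0}(x))\cdots A_{t_0}(x)$ gives
\begin{align*}
  \frac{r_-^1(A_{nt_0}(x))}{r_+^1(A_{nt_0}(x))}\le\prod_{k=0}^{n-1}\frac{r_-^1(A_{t_0}(X_{kt_0}(x)))}{r_+^1(A_{t_0}(X_{kt_0}(x)))}\le\mu^n.
\end{align*}
Finally, in coordinates adapted to the polar decomposition, $\|A_t|_{F_-(x)}\|$ is bounded above by a constant multiple of $r_-^1$ and $m(A_t|_{F_+(x)})$ is bounded below by a constant multiple of $r_+^1$; hence $\|A_t|_{F_-(x)}\|\cdot\|A_{-t}|_{F_+(X_t(x))}\|$ decays exponentially, yielding the domination estimate of Definition \ref{def1}.

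\textbf{Main obstacle.} The delicate step is Step 2, proving that the nested intersections collapse to subspaces of the correct dimension and that $F_\pm$ vary continuously in $x$: pointwise strict separation does not by itself control the rate at which the iterated cones shrink, so one must combine the polar decomposition of Proposition \ref{pr:J-separated-spectrum} with compactness of $\Lambda$. The same compactness argument is what upgrades pointwise strict inequality $r_-^1<r_+^1$ into the uniform gap $\mu<1$ needed for exponential domination, and this bootstrap is really the heart of the implication.
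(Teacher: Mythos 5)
Your overall route is the same as the paper's (adapted metric and $\J(v)=\|v_+\|^2-\|v_-\|^2$ in one direction; nested cone images $F_\pm$ plus a uniform gap $r_-/r_+<1$ and submultiplicativity in the other), but there are two genuine gaps. First, in the direction ``dominated splitting $\Rightarrow$ strict $\J$-separation'' you renormalize to a single time $T$ and then assert that strict cone invariance under $A_T$ ``recovers strict $\J$-separation for all $t>0$ since the cone property is forward-invariant.'' This does not follow: Definition~\ref{def:J-separated} demands $A_t(x)(C_+(x)\cup C_0(x))\subset C_+(X_t(x))$ for \emph{every} $t>0$, and invariance at time $T$ says nothing about $t\in(0,T)$. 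Indeed, with a non-adapted metric and $\J(v)=\|v_+\|^2-\|v_-\|^2$, a vector with $\|v_+\|=\|v_-\|$ can leave the cone for small $t$ whenever the domination constant $K$ exceeds $1$. The paper avoids this by invoking Gourmelon's adapted metric for the cocycle (Theorem~\ref{thm:phyp-adapted-metric}), which yields $\|A_t|_{F_-(x)}\|\cdot\|(A_t|_{F_+(x)})^{-1}\|<e^{-\lambda t}$ for \emph{all} $t>0$ simultaneously; with the form built from that metric, strict separation at every positive time is immediate. Your argument should be repaired by citing this all-$t$ adapted metric rather than the time-$T$ renormalization.

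Second, in the direction ``strict $\J$-separation $\Rightarrow$ dominated splitting,'' the decisive step --- that the nested intersections $\bigcap_{t\ge0} A_t(X_{-t}(x))\cdot\overline{C_+(X_{-t}(x))}$ collapse to a single subspace of dimension exactly $p$ (and $q$ for $F_-$), depending continuously on $x$ --- is precisely what you leave unresolved: the remark that the $r_+^j$-eigenspaces of the polar factors ``stabilize'' is not an argument, since those eigenspaces for different times are not nested, and you yourself flag this as the main obstacle. The paper fills this hole with Wojtkowski's projective contraction result (Theorem~\ref{thm:proj-contr}): the set $G_p(C_+)$ of $p$-dimensional subspaces inside the positive cone carries a complete metric which every $\J$-separated map contracts by the factor $r_-/r_+$; combined with the uniform gap $\sup_x r_-/r_+<1$ obtained by compactness (the very estimate you later use, via Proposition~\ref{pr:J-separated-spectrum} and Corollary~\ref{cor:compos-max-exp}), the diameters of the iterated cone images tend to zero, giving existence, uniqueness, the correct dimension of $F_\pm$, and, once domination is established, their continuity. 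Your closing domination estimate (uniform $\mu<1$, submultiplicativity, and comparison of $\|A_t|_{F_\pm}\|$ with $r_\pm^1$) is essentially the paper's argument, so after the two points above are patched the proof coincides with the one in the paper, which handles the converse direction through Theorem~\ref{thm:necessity}.
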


Moreover the properties stated in
Theorem~\ref{thm:strict-J-separated-cocycle} are robust:
they hold for all nearby cocycles on $E_U$ over all flows
close enough to $X_t$; see
Section~\ref{sec:partial-hyperb-impli}.

We now start the proof of
Theorem~\ref{thm:strict-J-separated-cocycle}. We construct a
decomposition of the tangent space over $\Lambda$ into a
direct sum of invariant subspaces and then we prove that
this is a dominated splitting.

\subsubsection{The cones are contracted}
\label{sec:project-metrics}

To obtain the invariant subspaces, we show that the action
of $A_{t}(x)$ on the set of all $p$-dimensional spaces
inside the positive cones is a contraction in the
appropriate distance. For that we use a result from
\cite{Wojtk01}.

Let us fix $C_+=C_+(x)$ for some $x\in\Lambda$ and consider
the set $G_p(C_+)$ of all $p$-subspaces of $C_+$, where
$p=n-q$. This manifold can be identified with the set of
all $q\times p$ matrices $T$ with real entries such that
$T^*T<I_p$, where $I_p$ is the $p\times p$ identity matrix
and $<$ indicates that for the standard inner product in
$\RR^p$ we have $<T^*T u,u> \ < \ <u,u>$, for all $u\in\RR^p$.

A $\J$-separated operator naturally sends $G_p(C_+)$ inside
itself. This operation is a contraction.

\begin{theorem}
  \label{thm:proj-contr}
  There exists a distance $\dist$ on $G_p(C_+)$ so that
  $G_p(C_+)$ becomes a complete metric space and,
  if $L:V\to V$ is $\J$-separated and $T_1,T_2\in G_p(C_+)$,
  then
  $$
  \dist(L(T_1),L(T_2))\le \frac{r_-}{r_+}\dist(T_1,T_2),
  $$
  where $r_\pm$ are given by Proposition~\ref{pr:J-separated-spectrum}.
\end{theorem}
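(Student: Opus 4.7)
\medskip

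\textbf{Proof plan for Theorem~\ref{thm:proj-contr}.}

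The plan is to first realize $G_p(C_+)$ explicitly as an open bounded set of matrices, then build a Finsler/hyperbolic-type distance on it, and finally exploit the polar-type decomposition $L=RU$ of Proposition~\ref{pr:J-separated-spectrum} to compute the contraction ratio. First, working in the adapted basis $\{v_1,\dots,v_n\}$ from Section~\ref{sec:adapted-coordin-quad}, split $V=V_-\oplus V_+$ where $V_-=\operatorname{span}(v_1,\dots,v_q)$ is $\J$-negative and $V_+=\operatorname{span}(v_{q+1},\dots,v_n)$ is $\J$-positive. Every $p$-subspace $\pi\subset C_+$ has trivial intersection with $V_-$, so it is the graph of a unique linear map $T:V_+\to V_-$, and the condition $\pi\setminus\{\vec0\}\subset C_+$ translates exactly into the operator inequality $T^*T<I_p$. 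Thus $G_p(C_+)$ is identified with the bounded convex domain $\mathcal{D}:=\{T\in\mathrm{Mat}_{q\times p}(\RR):I_p-T^*T>0\}$.

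Second, I equip $\mathcal{D}$ with a distance invariant under $\J$-isometries by pulling back the standard Bergman/Hilbert-type metric on this Cartan domain; equivalently, for $T_1,T_2\in\mathcal{D}$ one can set
\begin{align*}
  \dist(T_1,T_2)=\tfrac12\log\frac{1+\rho(T_1,T_2)}{1-\rho(T_1,T_2)},
\end{align*}
where $\rho(T_1,T_2)\in[0,1)$ is the largest singular value of the ``Möbius-type'' combination $(I-T_2T_2^*)^{-1/2}(T_1-T_2)(I-T_2^*T_1)^{-1}(I-T_2^*T_2)^{1/2}$. The key properties I need are: this is a genuine distance; it tends to $+\infty$ as either argument approaches $\partial\mathcal{D}$ (which immediately yields completeness, since any Cauchy sequence stays in a compact subset of $\mathcal{D}$); and it is invariant under the natural action of the group of $\J$-isometries on $\mathcal{D}$. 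These are the standard facts for such Cayley/Cartan-type metrics and I would quote them from Wojtkowski~\cite{Wojtk01} and \cite{Pota79}.

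Third, for the contraction estimate, I use $L=RU$ from Proposition~\ref{pr:J-separated-spectrum}: $U$ is a $\J$-isometry so it acts on $\mathcal{D}$ as an isometry, and therefore $\dist(L(T_1),L(T_2))=\dist(R(T_1),R(T_2))$. By the same proposition, $R$ is diagonalizable by a $\J$-isometry, so after a further isometric change of basis (which does not affect distances) we may assume $R=R_-\oplus R_+$ with $R_-=\operatorname{diag}(r_-^1,\dots,r_-^q)$ on $V_-$ and $R_+=\operatorname{diag}(r_+^1,\dots,r_+^p)$ on $V_+$. A short computation on the graph representation shows that the induced action on $\mathcal{D}$ is $T\mapsto R_- T R_+^{-1}$. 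Since $\|R_-\|\le r_-$ and $\|R_+^{-1}\|\le r_+^{-1}$, one verifies directly that the ``Möbius invariant'' $\rho(\cdot,\cdot)$ satisfies $\rho(R(T_1),R(T_2))\le(r_-/r_+)\,\rho(T_1,T_2)$, and this inequality transfers, via monotonicity of $\tfrac12\log\tfrac{1+x}{1-x}$, to the desired estimate $\dist(L(T_1),L(T_2))\le(r_-/r_+)\dist(T_1,T_2)$.

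The main obstacle I anticipate is the explicit verification that the chosen distance really contracts by the sharp factor $r_-/r_+$ rather than by some softer bound coming from the operator norms alone; this is where the specific hyperbolic form of $\dist$ is essential, and it is the reason for preferring the Cayley/Cartan metric over the naive ambient norm $\|T_1-T_2\|$ (which fails to give completeness and does not interact with $R_\pm$ optimally when $T_i$ approach $\partial\mathcal{D}$). Checking this sharp bound reduces, after the diagonalization, to a direct computation with singular values of $R_\pm$, and this is precisely the calculation carried out in~\cite{Wojtk01,Pota79}.
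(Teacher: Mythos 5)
The paper itself gives no argument here: its ``proof'' of Theorem~\ref{thm:proj-contr} is the citation to \cite[Theorem 1.6]{Wojtk01}, so your sketch is really an outline of Wojtkowski's argument rather than an alternative to anything proved in the paper. Most of your ingredients are correct and do match that argument: the graph identification of $G_p(C_+)$ with $\{T:\ T^*T<I_p\}$, the use of the polar-type decomposition $L=RU$ together with invariance of the metric under $\J$-isometries, and the computation that a diagonalized $R=R_-\oplus R_+$ acts on the domain by $T\mapsto R_-TR_+^{-1}$. (A small slip: $\dist(L(T_1),L(T_2))=\dist\bigl(R(U(T_1)),R(U(T_2))\bigr)$, not $\dist(R(T_1),R(T_2))$; this is harmless because your contraction bound for $R$ is meant to hold for arbitrary pairs, and one then uses that $U$ acts isometrically.)

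The genuine gap is the last step. The inequality you propose to ``verify directly'', namely $\rho(R(T_1),R(T_2))\le (r_-/r_+)\,\rho(T_1,T_2)$ for the M\"obius (pseudo-hyperbolic) invariant $\rho$, is false. Already in the scalar case $p=q=1$, where the domain is $(-1,1)$, $\rho(t_1,t_2)=|t_1-t_2|/|1-t_1t_2|$ and the action is $t\mapsto\lambda t$ with $\lambda=r_-/r_+\in(0,1)$: taking $t_1=t$, $t_2=-t$ gives $\rho(\lambda t,-\lambda t)=2\lambda t/(1+\lambda^2t^2)$, which is strictly larger than $\lambda\,\rho(t,-t)=2\lambda t/(1+t^2)$ for every $t\neq 0$. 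So the sharp contraction simply does not hold at the level of $\rho$; it holds only for the hyperbolic distance itself. (Moreover, even if the $\rho$-inequality were true, monotonicity of $x\mapsto\tfrac12\log\tfrac{1+x}{1-x}$ alone would not let you pull the factor $r_-/r_+$ outside the function; you would need in addition convexity and the value $0$ at $0$.) The correct route --- and the one behind \cite{Wojtk01,Pota79} --- is infinitesimal: show that the differential of the M\"obius action of $L$ (equivalently, of $T\mapsto R_-TR_+^{-1}$) contracts the Finsler/Riemannian norm defining $\dist$ by the factor $r_-/r_+$ at every point of the domain, and then integrate along paths; in the scalar model this is just the mean value theorem applied to $s\mapsto\tanh^{-1}(\lambda\tanh s)$, whose derivative is bounded by $\lambda$. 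With that replacement of the final step, your outline does reproduce the cited proof.
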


\begin{proof}
  See \cite[Theorem 1.6]{Wojtk01}.
\end{proof}

\subsubsection{Invariant directions}
\label{sec:invari-direct}

Now we consider a pair $C_-(x)$ and $C_-(X_{-t}(x))$ of positive
cones, for some fixed $t>0$ and $x\in\Lambda$, together with
the linear isomorphism $A_{-t}(x): E_x\to E_{X_{-t}(x)}$. We
note that the assumption of strict $\J$-separation ensures
that $A_{-t}(x)\mid C_-(x): C_-(x)\to
C_-(X_{-t}(x))$. We have in fact
\begin{align}\label{eq:strict-inclusion}
  \overline{A_{-t}(x)\cdot C_-(x)}\subset C_-(X_{-t}(x)).
\end{align}
Moreover, by Theorem~\ref{thm:proj-contr} we have that the
diameter of $A_{-nt}(x)\cdot C_-(X_{nt}(x))$ decreases
exponentially fast when $n$ grows. Hence there exists a
unique element $F_-(x)\in G_q(C_-(x))$ in the intersection
of all these cones. Analogous results hold for the positive
cone with respect to the action of $A_t(x)$. It is easy to
see that
\begin{align}\label{eq:F-Axt-inv}
  A_{t}(x)\cdot F_\pm(x)=F_\pm(X_{t}(x)), \quad x\in\Lambda.
\end{align}
Moreover, since the strict
inclusion~\eqref{eq:strict-inclusion} holds for whatever
$t>0$ we fix, then we see that the subspaces $F_\pm$ do not
depend on the chosen $t>0$.

\subsubsection{Domination}
\label{sec:dominat}

The contraction property on $C_+$ for $A_t(x)$ and on $C_-$
for $A_{-t}(x)$, any $t>0$, implies domination
directly. Indeed, let us fix $t>0$ in what follows and
consider the norm $|\cdot|$ induced on $E_x$ for each $x\in
U$ by
\begin{align*}
  |v|:=\sqrt{\J(v_-)^2+\J(v_+)^2} \quad\text{where}\quad
  v=v_-+ v_+, v_\pm\in F_\pm(x).
\end{align*}
Now, according to Lemma~\ref{le:kuhne} together with
Proposition~\ref{pr:J-separated-spectrum} we have that, for
each $x\in \overline{X_t(U)}$ and every pair of unit vectors
$u\in F_-(x)$ and $v\in F_+(x)$
\begin{align*}
  \frac{|A_t(x)u|}{|A_{t}(x) v|}\le
  \frac{r_-^t(x)}{r_+^t(x)}\le
  \omega_t:=\sup_{z\in\overline{X_t(U)}}\frac{r_-^t(z)}{r_+^t(z)}   <1,
\end{align*}
where $r_\pm^t(x)$ represent the values $r_\pm$ shown to
exist by Lemma~\ref{le:kuhne} with respect to the strictly
$\J$-separated linear map $A_t(x)$. The value of $\omega_t$ is
strictly smaller than $1$ by continuity of the functions
$r_\pm$ on the compact subset $\overline{X_t(U)}$.

Now we use the following well-known lemma.

\begin{lemma}\label{le:dom-split-equiv}
  Let $X_t$ be a $C^1$ flow and $\Lambda$ a compact
  invariant set for $X_t$ admitting a continuous invariant
  splitting $T_{\Lambda}M = F_- \oplus F_+$. Then this
  splitting is dominated if, and only if, there exists a
  riemannian metric on $\Lambda$ inducing a norm such that
  $$
  \lim_{t\to+\infty}\|A_t(x)\mid_{F_-(x)}\|
  \cdot\|A_{-t}(X_t(x))\mid_{F_+(X_t(x))}\| = 0,
  $$
  for all $x \in \Lambda$.
\end{lemma}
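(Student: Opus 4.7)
The plan is to prove the two implications separately, exploiting compactness of $\Lambda$ and sub-multiplicativity of the cocycle. Throughout, write
\begin{align*}
  g(t,x):=\|A_t\mid_{F_-(x)}\|\cdot\|A_{-t}\mid_{F_+(X_t(x))}\|,
\end{align*}
which is jointly continuous in $(t,x)\in[0,\infty)\times\Lambda$ under a fixed riemannian metric, and, by the cocycle property applied to each factor, satisfies the sub-multiplicative inequality $g(t+s,x)\le g(t,x)\cdot g(s,X_t(x))$ for all $s,t\ge 0$.

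For the forward direction, assume $F_-\oplus F_+$ is dominated in the sense of \eqref{eq:def-dom-split}. Then $g(t,x)\le Ke^{-\lambda t}$ uniformly on $\Lambda$, so $g(t,x)\to 0$ for every $x$. Since any two continuous riemannian metrics on the compact set $\Lambda$ are equivalent, this conclusion holds in every such metric, giving the stated limit.

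For the reverse direction, suppose the limit holds for some metric. The main step is to upgrade the pointwise decay to \emph{uniform exponential} decay. For each $x\in\Lambda$, choose $t_x>0$ with $g(t_x,x)<1/4$; by joint continuity, there is an open neighborhood $U_x\ni x$ with $g(t_x,y)<1/2$ for all $y\in U_x$. By compactness, finitely many $U_{x_1},\dots,U_{x_N}$ cover $\Lambda$ with associated times $t_1,\dots,t_N$; set $T^*=\max_i t_i$ and define $\tau:\Lambda\to\{t_1,\dots,t_N\}$ by $\tau(y)=t_i$ for some $i$ with $y\in U_{x_i}$, so that $g(\tau(y),y)<1/2$ everywhere. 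Iterate by $y_0=x$, $y_{k+1}=X_{\tau(y_k)}(y_k)$ and $S_k=\tau(y_0)+\cdots+\tau(y_{k-1})$; sub-multiplicativity yields $g(S_k,x)<2^{-k}$. Given arbitrary $t\ge T^*$, pick the largest $k$ with $S_k\le t$, so $S_k\in(t-T^*,t]$ and $k\ge t/T^*-1$; then
\begin{align*}
  g(t,x)\le g(S_k,x)\cdot g(t-S_k,X_{S_k}(x))\le 2^{-k} M \le (2M)\cdot 2^{-t/T^*},
\end{align*}
where $M:=\sup\{g(s,z):s\in[0,T^*],\,z\in\Lambda\}<\infty$ by continuity. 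This gives dominated decay of $g$ in the chosen metric, hence the dominated-splitting inequality \eqref{eq:def-dom-split} in that metric; equivalence of continuous riemannian metrics on $\Lambda$ transfers the conclusion to the original metric.

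The main obstacle is precisely the compactness argument above: without a monotone or uniform-in-$x$ hypothesis, one cannot directly apply Dini-type reasoning, so the combination of finite subcover, the sub-multiplicative inequality, and careful iteration is essential to convert the merely pointwise assumption into the uniform exponential bound required by domination. Once this is in place, the metric-invariance of domination on the compact set $\Lambda$ takes care of the fact that the limit hypothesis is stated for some, possibly non-original, riemannian structure.
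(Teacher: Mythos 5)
Your proof is correct. Note that the paper itself offers no argument for this lemma: it is invoked as ``well-known'' (in the same spirit as Lemma~\ref{le:unif-behav-lim0}, whose proof is delegated to \cite{Man82}), so there is no in-paper proof to compare against. Your route is the standard one for such statements, and every step checks out: the quantity $g(t,x)=\|A_t\mid_{F_-(x)}\|\cdot\|A_{-t}\mid_{F_+(X_t(x))}\|$ is indeed submultiplicative, since $A_{t+s}(x)=A_s(X_t(x))\circ A_t(x)$ and $A_{-(t+s)}(X_{t+s}(x))=A_{-t}(X_t(x))\circ A_{-s}(X_{t+s}(x))$ together with the invariance of $F_\pm$ give $g(t+s,x)\le g(t,x)\,g(s,X_t(x))$; continuity of $g$ follows from continuity of the splitting and of the cocycle; and the finite-subcover/iteration scheme correctly upgrades the pointwise limit to the uniform bound $g(t,x)\le 2M\,2^{-t/T^*}$, which is exactly the inequality \eqref{eq:def-dom-split} after adjusting $K$ (your bound, stated for $t\ge T^*$, extends trivially to $t\in[0,T^*]$ since $2^{-t/T^*}\ge 1/2$ there, so no constant-chasing is needed). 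The forward direction and the transfer between uniformly equivalent continuous metrics on the compact set $\Lambda$ are also handled correctly, and this is the right place to use metric equivalence, since domination changes only the constant $K$, not the rate $\lambda$. In short, you have supplied a complete, self-contained proof of a fact the paper leaves as a citation-level remark, by the same compactness-plus-submultiplicativity mechanism that underlies the classical pointwise-to-uniform criteria for domination and hyperbolicity.
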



This shows that for the cocycle $A_t(x)$ the splitting
$E_\Lambda=F_-\oplus F_+$ is dominated, since the above argument
does not depend on the choice of $t>0$ and implies that
\begin{align*}
 \lim\limits_{n\to+\infty}\frac{ |A_{nt}(x)u|}{|A_{nt}(x) v|} = 0,
\end{align*}
and we conclude that
\begin{align*}
\lim\limits_{t\to+\infty}  \frac{|A_{t}(x)u|}{|A_{t}(x) v|} = 0,
\quad x\in\Lambda, u\in F_-(x), v\in F_+(x).
\end{align*}

\subsubsection{Continuity of the splitting}
\label{sec:continuity-splitting}

The continuity of the subbundles $F_\pm$ over $\Lambda$ is a
consequence of domination together with the observation that
the dimensions of $F_\pm(x)$ do not depend on $x\in
\Lambda$; see for example \cite[Appendix
B]{BDV2004}. Moreover, since we are assuming that $A_t(x)$
is smooth, i.e. the cocycle admits an infinitesimal
generator, then the $A_t(x)$-invariance ensures that the
subbundles $F_\pm(x)$ can be differentiated along the orbits
of the flow.

This completes the proof that strict $\J$-separation implies
a dominated splitting, as stated in
Theorem~\ref{thm:strict-J-separated-cocycle}.


\subsection{Domination implies strict $\J$-separation}
\label{sec:partial-hyperb-impli}

Now we start the proof of the converse of
Theorem~\ref{thm:strict-J-separated-cocycle} by showing
that, given a dominated decomposition of a vector bundle
over a compact invariant subset $\Lambda$ of the base, for a
$C^1$ vector field $X$ on a trapping region $U$, there
exists a smooth field of quadratic forms $\J$ for which $Y$
is strictly $\J$-separated on $E_V$ over a neighborhood $V$
of $\Lambda$ for each vector field $Y$ sufficiently $C^1$
close to $X$ and every cocycle close enough to $A_t$.

We define a distance between smooth cocycles as follows. If
$D_A(x), D_B(x):E_x\to E_x$ are the infinitesimal generators
of the cocycles $A_t(x),B_t(x)$ over the flow of $X$ and $Y$
respectively, then we can recover the cocycles through the
non-autonomous ordinary differential
equation~\eqref{eq:non-homo-ode}. We then define the
\emph{distance $d$ between the cocycles $A_t$ and $B_t$} to
be
\begin{align*}
  d\big((A_t)_t,(B_t)_t\big):=\sup_{x\in M}\|D_A(x)-D_B(x)\|,
\end{align*}
where $\|\cdot\|$ is a norm on the vector bundle $E$. We
always assume that we are given a Riemannian inner product
in $E$ which induces the norm $\|\cdot\|$.

As before, let $\Lambda=\Lambda(U)$ be a maximal positively
invariant subset for a $C^1$ vector field $X$ endowed with a
linear multiplicative cocycle $A_t(x)$ defined on a vector
bundle over $U$.It is well-known that attracting sets are persistent in the
following sense.  Let $U$ be a trapping region for the flow
of $X$ and $\Lambda(U)=\Lambda_X(U)$ the corresponding
attracting set.
\begin{lemma}\cite[Chapter 10]{robinson1999}.
  \label{le:trapping}
   There exists a neighborhood $\U$ of $X$ in
  $\Mundo$ and an open neighborhood $V$ of $\Lambda(U)$
  such that $V$ is a trapping region for all $Y\in\U$, that
  is, there exists $t_0>0$ for which
  \begin{itemize}
  \item $Y_t(V)\subset V\subset U$ for all $t>0$;
  \item $\overline{Y_t(V)}\subset V$ for all $t>t_0$; and
  \item $\overline{Y_t(V)}\subset U$ for all $t>0$.
  \end{itemize}
\end{lemma}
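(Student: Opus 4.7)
The plan is to first build an auxiliary open neighborhood $V$ of $\Lambda(U)$ whose closure sits strictly inside $U$ and along whose boundary $X$ points strictly inward, and then invoke continuous dependence of the flow on the vector field to transfer the trapping properties to every $Y$ close enough to $X$ in $\U$.

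For the construction of $V$, I would appeal to a smooth Lyapunov-type function for the attracting set (a standard output of Conley theory, cf.\ \cite{robinson1999}): a smooth $L:U\to\RR_{\ge 0}$ vanishing on $\Lambda(U)$, strictly positive elsewhere on $U$, and with $X\cdot L<0$ on $U\setminus\Lambda(U)$. Setting $V:=\{L<c\}$ for a small regular value $c>0$ yields an open neighborhood of $\Lambda(U)$ with $\overline{V}\subset U$ and $X$ strictly inward-transverse along $\partial V=\{L=c\}$. An alternative, more hands-on, route is to take $V$ as a sublevel set of a bump function interpolating between $\overline{X_T(U)}$ and $M\setminus U$, for $T>0$ large enough that $\overline{X_T(U)}\subset U$ as provided by the trapping hypothesis.

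Next, for the persistence, by continuous dependence of flows on their generators, the map $(Y,t,x)\mapsto Y_t(x)$ is jointly continuous on $\Mundo\times[0,T_1]\times\overline{V}$ for any $T_1>0$. Since $X\cdot L<0$ on the compact boundary $\partial V$ is a $C^0$-open condition on the vector field, there is a $C^1$-neighborhood $\U$ of $X$ such that $Y\cdot L<0$ on $\partial V$ for every $Y\in\U$. This at once yields $Y_t(V)\subset V\subset U$ and $\overline{Y_t(V)}=Y_t(\overline{V})\subset\overline{V}\subset U$ for all $t>0$, handling the first and third bullets. For the second bullet, I would fix $t_0>0$ such that $X_{t_0}(\overline{V})$ is compactly contained in $V$ (possible because $X_t(\overline{V})$ falls into every neighborhood of $\Lambda(U)$ as $t\to+\infty$), and then shrink $\U$ if necessary so that the analogous inclusion $\overline{Y_{t_0}(\overline{V})}\subset V$ still holds for all $Y\in\U$. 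The inclusion $\overline{Y_t(V)}\subset V$ for $t>t_0$ then follows from the semigroup property and forward $Y$-invariance of $V$, since $\overline{Y_t(V)}=Y_{t-t_0}(\overline{Y_{t_0}(V)})\subset Y_{t-t_0}(V)\subset V$.

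The main technical point is the construction of an intermediate set $V$ with boundary along which $X$ is strictly inward-transverse; this is what makes the forward invariance $Y_t(V)\subset V$ robust under small $C^1$ perturbations. Once $V$ is in hand, the remainder reduces to a standard compactness-plus-uniform-continuity argument using classical ODE theorems.
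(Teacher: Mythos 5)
The paper does not actually prove this lemma: it is quoted directly from \cite[Chapter 10]{robinson1999}, so there is no internal argument to compare against. Judged on its own, your proof is correct and is essentially the standard one for persistence of trapping regions: replace $U$ by a smaller neighborhood $V$ of $\Lambda(U)$ with $\overline V\subset U$ on whose boundary the vector field is strictly inward, observe that strict inwardness ($Y\cdot L<0$ on the compact set $\partial V$) is an open condition even in the $C^0$ topology, and conclude forward invariance for all nearby $Y$ by a first-exit-time argument; joint continuity of $(Y,t,x)\mapsto Y_t(x)$ on compact time intervals is only needed as you use it. A few points to tighten. First, your ``more hands-on'' alternative is not adequate as stated: a sublevel set of an arbitrary bump function interpolating between $\overline{X_T(U)}$ and $M\setminus U$ need not have $X$ inward along its boundary; one must average such a function along the flow (or otherwise use the dynamics) to force the orbital derivative to be negative, which is exactly what the Lyapunov-function construction does, so you should lean on that route (and cite Wilson/Conley for the existence of the smooth strict Lyapunov function, which is itself a nontrivial theorem). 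Second, to guarantee that $\{L<c\}$ is compactly contained in $U$ and picks up no spurious components, work inside the compact neighborhood $\overline{X_T(U)}\subset\inte(U)$ of $\Lambda(U)$ and take $c$ below $\min L$ on its topological boundary; this is routine but worth saying. Third, your separate argument for the second bullet is superfluous: once $Y\cdot L<0$ on $\partial V$, every point of $\overline V$ enters $V$ immediately and stays there, so $\overline{Y_t(V)}=Y_t(\overline V)\subset V$ for \emph{every} $t>0$, which gives all three bullets at once with any choice of $t_0$, uniformly in $Y\in\U$.
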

We can thus consider
$\Lambda_Y=\Lambda_Y(U)=\cap_{t>0}\overline{Y_t(U)}$ in what follows
for $Y\in\V$ in a small enough $C^1$ neighborhood of $X$.

\begin{theorem}
  \label{thm:necessity}
  Suppose that $\Lambda$ has a dominated splitting
  $E_\Lambda=F_-\oplus F_+$. Then there exists a $C^1$
  field of quadratic forms $\J$ on a neighborhood
  $V\subset U$ of $\Lambda$, a $C^1$-neighborhood $\V$
  of $X$ and a $C^0$-neighborhood $\W$ of $A_t(x)$ such
  that $B_t(x)$ is strictly $\J$-separated on $V$ with
  respect to $Y\in\V$ and $B\in\W$. More
    precisely, there are constants $\kappa,\omega>0$
    such that, for each $Y\in\V$, $B\in\W$,
    $x\in\Lambda_Y$ and $t\ge0$
    \begin{align*}
      |\J(B_t(x)v_-)|\le \kappa e^{-\omega
        t}\J(B_t(x)v_+) ,  v_\pm\in F^B_\pm(x),
      \quad
      \J(v_\pm)=\pm1;
    \end{align*}
    where $F^B_\pm$ are the subbundles of the dominated
    splitting of $E_{\Lambda_Y}$.
\end{theorem}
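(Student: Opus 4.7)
The plan is to manufacture $\J$ directly from the dominated splitting, combining the robustness of dominated splittings and trapping regions with an adapted riemannian metric so that the strict infinitesimal inequality $\tilde\J_x>\delta(x)J$ of Theorem~\ref{thm:J-separated-tildeJ}(3) holds on a neighborhood of $\Lambda$ and strict $\J$-separation follows. By Lemma~\ref{le:trapping} and the standard persistence of dominated splittings under $C^1$ perturbations of the generator and $C^0$ perturbations of the cocycle (see e.g.~\cite[Appendix~B]{BDV2004}), I pick a $C^1$-neighborhood $\V$ of $X$ and a $C^0$-neighborhood $\W$ of $A_t$ so that for each $(Y,B)\in\V\times\W$ the maximal invariant set $\Lambda_Y=\cap_{t>0}\overline{Y_t(V)}$ admits a dominated splitting $E_{\Lambda_Y}=F^B_-\oplus F^B_+$ of the same dimensions as $F_\pm$, depending continuously on $(Y,B)$, with uniform lower bound on the angle between the subbundles. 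A partition-of-unity argument then produces a continuous, and after mollification along the flow $C^1$, transverse splitting $E_V=\widetilde F_-\oplus\widetilde F_+$ on a common neighborhood $V$ of $\Lambda$, agreeing with $F^B_\pm$ on each $\Lambda_Y$.

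Next I replace the ambient inner product by one adapted to the splitting. Fix $T>0$ so large that $\|A_T|_{F_-(x)}\|\,\|(A_T|_{F_+(x)})^{-1}\|\le e^{-\lambda T}$ on $\Lambda$, fix $\mu\in(0,\lambda)$, and for $v\in F_-(x)$, $w\in F_+(x)$, $x\in\Lambda$, set
\begin{align*}
\|v\|^2_{-,x}:=\int_0^T e^{+\mu s}\|A_s v\|^2\,ds,\qquad
\|w\|^2_{+,x}:=\int_0^T e^{-\mu s}\|A_s w\|^2\,ds,
\end{align*}
declaring $F_-\perp F_+$. A direct computation, with $T$ large enough to absorb the boundary contributions, shows that the infinitesimal generator $D$ of $A_t$ satisfies $2\langle Du,u\rangle_{-}\le -\mu\|u\|^2_{-}$ for $u\in F_-(x)$ and $2\langle Dw,w\rangle_{+}\ge+\mu\|w\|^2_{+}$ for $w\in F_+(x)$, uniformly on $\Lambda$. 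I then extend this adapted inner product smoothly to $V$ via a partition of unity with respect to the extended splitting $\widetilde F_\pm$.

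Setting
\begin{align*}
\J_x(v):=\|v_+\|^2_{+,x}-\|v_-\|^2_{-,x},\qquad v=v_-+v_+\in\widetilde F_-(x)\oplus\widetilde F_+(x),\ x\in V,
\end{align*}
yields a $C^1$ non-degenerate quadratic form of constant index $q=\dim F_-$. On $\Lambda$ the $A_t$-invariance of $F_\pm$ together with the two infinitesimal inequalities of the previous step give $\langle\tilde\J_x v,v\rangle\ge\mu(\|v_+\|^2_{+}+\|v_-\|^2_{-})$, which strictly exceeds $\frac{\mu}{2}\J(v)=\frac{\mu}{2}(\|v_+\|^2_{+}-\|v_-\|^2_{-})$ for every $v\neq 0$; hence $\tilde\J_x>\frac{\mu}{2}J$ as operators on $\Lambda$, which is exactly the strict hypothesis of Theorem~\ref{thm:J-separated-tildeJ}(3). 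By uniform continuity on the compact set $\Lambda$, this strict operator inequality persists with a slightly smaller constant $\omega>0$ on $V$ and for every $(Y,B)\in\V\times\W$, the cross-terms arising from the non-invariance of $\widetilde F_\pm$ off $\Lambda_Y$ being controlled by their vanishing on $\Lambda_Y$ and continuity. Theorem~\ref{thm:J-separated-tildeJ}(3) then delivers strict $\J$-separation of $B_t$ on $V$; the quantitative estimate of the statement follows by integrating the infinitesimal inequalities along orbits inside $\Lambda_Y$, where $F^B_\pm$ is $B_t$-invariant, so that no cross-terms appear.

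The main obstacle is the middle step: producing an inner product that is simultaneously smooth on the whole neighborhood $V$ and \emph{infinitesimally} adapted on $\Lambda$, so that strict $\J$-separation holds for every $t>0$ rather than merely beyond some threshold, and ensuring that the cross-terms from non-invariance of $\widetilde F_\pm$ off $\Lambda_Y$ do not destroy the strict infinitesimal inequality. The integrated-norm construction handles the infinitesimal adaptation on $\Lambda$; partition-of-unity and mollification handle smoothness across $V$; continuity of the dominated splitting in $(Y,B)$ together with uniform continuity on compact sets handle the perturbation.
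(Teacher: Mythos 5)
Your overall architecture (adapted metric, quadratic form equal to the difference of the squared norms of the two components, infinitesimal criterion of Theorem~\ref{thm:J-separated-tildeJ}, openness under perturbation) is the paper's, but the key middle step is wrong. From a dominated splitting you cannot deduce the absolute inequalities $2\langle Du,u\rangle_-\le-\mu\|u\|_-^2$ on $F_-$ and $2\langle Dw,w\rangle_+\ge\mu\|w\|_+^2$ on $F_+$: domination controls only the product $\|A_T|_{F_-(x)}\|\cdot\|(A_T|_{F_+(x)})^{-1}\|$, not each factor, and $F_-$ may well be uniformly expanded (e.g. $A_t=e^{t}$ on $F_-$, $e^{3t}$ on $F_+$). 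Concretely, in your integral construction $\partial_t\|A_t u\|^2_{-,X_t(x)}\big|_{t=0}=-\mu\|u\|_-^2+e^{\mu T}\|A_T u\|^2-\|u\|^2$, and the boundary term $e^{\mu T}\|A_T u\|^2-\|u\|^2$ cannot be ``absorbed by taking $T$ large'' unless $F_-$ is genuinely contracted; similarly on $F_+$. Your conclusion $\tilde\J_x>\frac{\mu}{2}J$ with a \emph{constant positive} $\delta$ is therefore strictly stronger than the hypotheses allow: together with Theorem~\ref{thm:char-dom-split} it would force $F_+$ uniformly expanded (and your intermediate claims force $F_-$ uniformly contracted), i.e. hyperbolicity, whereas only domination is assumed. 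The correct statement, which is what the paper proves (Proposition~\ref{pr:strict-J-sep} via Lemma~\ref{le:kuhne}), is $\tilde\J_{0,x}>\delta(x)\J_0$ with $\delta$ of arbitrary sign, obtained by checking positivity of $\partial_t\J_0(A_t v_0)\mid_{t=0}$ only on the zero cone $C_0$, where the ratio form of domination does give a lower bound $2\lambda$.

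The perturbation step is also flawed as written: a single extended splitting $\widetilde F_\pm$ on $V$ cannot ``agree with $F^B_\pm$ on each $\Lambda_Y$'' for all $(Y,B)\in\V\times\W$ simultaneously --- already two cocycles over the same $Y$, or two fields sharing a singularity or periodic orbit, have distinct invariant splittings at common points --- so the control of cross-terms by ``their vanishing on $\Lambda_Y$'' and the final integration ``inside $\Lambda_Y$ where no cross-terms appear'' have no basis. The paper avoids both problems: it builds $\J$ only from the splitting of $A_t$ over $\Lambda$, using Gourmelon's adapted metric (Theorem~\ref{thm:phyp-adapted-metric}) so that strict cone invariance is immediate; it extends and $C^1$-regularizes the form to $V$; robustness for nearby $(Y,B)$ comes from the openness of the strict operator inequality $\tilde\J>\delta(\cdot)\J$ with respect to the $C^0$ distance between infinitesimal generators, not from matching $\J$ to the perturbed splittings; and the quantitative estimate follows because strict $\J$-separation of $B_t$ itself produces a dominated splitting $F^B_\pm$ lying in the cones of $\J$, after which Lemma~\ref{le:ratio} and the adapted-metric domination give the bound $\kappa e^{-\omega t}$. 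To repair your argument you would need to replace the absolute infinitesimal inequalities by the relative (zero-cone) computation and drop the simultaneous matching of $\widetilde F_\pm$ with all $F^B_\pm$.
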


The quadratic form $\J$ is a inner product in each $F_\pm$,
since $F_\pm$ are finite dimensional subbundles of $E$
where $\J$ does not change sign, thus
the compactness of $\Lambda$ ensures
 the following.

\begin{lemma}\label{le:ratio}
  There exists a constant $K>0$ such that for every
  pair of non-zero vectors $(w,v)\in F_-(x)\times
  F_+(x)$ we have $ \frac1K\|w\|^2 \le |\J(w)| \le
  K\|w\|^2, \frac1K\|v\|^2 \le \J(v) \le K\|v\|^2$ and
    \begin{align*}
    \frac1K
    \sqrt{\frac{|\J(w)|}{\J(v)}}
    &\le
    \frac{\| w\|}{\| v\|}
    \le K  \sqrt{\frac{|\J(w)|}{\J(v)}}.
  \end{align*}
\end{lemma}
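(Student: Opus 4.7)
\textbf{Proof plan for Lemma~\ref{le:ratio}.} The plan is to show that $\J$ restricted to each subbundle $F_\pm$ is definite, then extract uniform bounds via the compactness of $\Lambda$, and finally derive the ratio inequality by elementary algebra.

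First, I would verify that $\J$ is positive definite on each fiber of $F_+$ and negative definite on each fiber of $F_-$. By the construction in Section~\ref{sec:invari-direct}, the invariant subspace $F_+(x)$ is a $p$-dimensional element of $G_p(C_+(x))$, i.e., a $p$-subspace entirely contained in the open positive cone $C_+(x) = \{0\}\cup\{v:\J(v)>0\}$. Therefore every nonzero $v \in F_+(x)$ satisfies $\J(v) > 0$. The symmetric argument, using that strict $\J$-separation of $A_t$ is equivalent to strict $(-\J)$-separation of $A_{-t}$ by Remark~\ref{rmk:J-separated-C-}, shows that $\J(w) < 0$ for every nonzero $w \in F_-(x)$.

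Second, the uniform bounds follow from compactness. The unit sphere bundles $SF_\pm := \{v \in F_\pm(x) : \|v\|=1,\ x\in\Lambda\}$ are compact, since $\Lambda$ is compact and the subbundles $F_\pm$ are continuous and finite-dimensional (continuity was established in Section~\ref{sec:continuity-splitting}). The map $v \mapsto |\J(v)|$ is continuous and, by the previous step, strictly positive on each $SF_\pm$, hence attains a positive minimum and a finite maximum. Taking $K$ larger than the reciprocals of these minima and than these maxima, and using the homogeneity $\J(\lambda v)=\lambda^2 \J(v)$, one obtains
\begin{align*}
\tfrac{1}{K}\|w\|^2 \le |\J(w)| \le K\|w\|^2,
\qquad
\tfrac{1}{K}\|v\|^2 \le \J(v) \le K\|v\|^2,
\end{align*}
for every nonzero $w\in F_-(x)$, $v\in F_+(x)$, and every $x\in\Lambda$.

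Finally, the ratio inequality is immediate algebra: dividing these bounds yields $\tfrac{1}{K^2}\tfrac{\|w\|^2}{\|v\|^2} \le \tfrac{|\J(w)|}{\J(v)} \le K^2\tfrac{\|w\|^2}{\|v\|^2}$, and taking square roots (after replacing $K$ by $K^2$ if necessary) gives the stated inequality. There is no substantial obstacle here; the only point requiring care is checking that the nested intersection defining $F_\pm(x)$ lies strictly inside the corresponding open cone rather than merely in its closure, which is exactly what strict $\J$-separation guarantees and was exploited in the construction of $F_\pm$.
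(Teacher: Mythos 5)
Your proposal is correct and follows essentially the same route as the paper, which justifies the lemma in one line: $\J$ has a definite sign on each of the finite-dimensional continuous subbundles $F_\pm$, so by compactness of $\Lambda$ (and continuity of $\J$ and of the splitting) the restriction of $\pm\J$ is uniformly comparable to $\|\cdot\|^2$, and the ratio bound is then elementary algebra. Your extra care about $F_\pm(x)$ lying strictly inside the open cones (or, in the converse direction of Theorem~\ref{thm:necessity}, the definiteness being built into the construction of $\J$ from the adapted metric) is exactly the point the paper relies on, so there is no gap.
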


To prove Theorem~\ref{thm:necessity} we use the
following result from \cite{Goum07}, ensuring the
existence of adapted metrics for dominated splittings
over Banach bundle automorphisms and flows.

Let $\Lambda$ be a compact invariant set for a $C^1$ vector
field $X$ and let $E$ be a vector bundle over $M$.
\begin{theorem}
  \label{thm:phyp-adapted-metric}
  Suppose that $T_{\Lambda}M = F_-\oplus F_+$ is a dominated
  splitting for a linear multiplicative cocycle $A_t(x)$
  over $E$.
  There exists a neighborhood $V$ of $\Lambda$ and a
  Riemannian metric $<<\cdot,\cdot>>$ inducing a norm
  $|\cdot|$ on $E_V$ such that there exists $\lambda>0$
  satisfying for all $t>0$ and $x\in\Lambda$
  \begin{align*}
    |A_t(x)\vert_{F_-(x)}|\cdot\big|(A_t(x)\vert_{F_+(x)})^{-1}|
    <e^{-\lambda t}.
  \end{align*}
\end{theorem}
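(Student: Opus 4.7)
The plan is to adapt Mather's classical construction of adapted metrics to the setting of flows with a merely dominated (not necessarily hyperbolic) splitting. First, by compactness of $\Lambda$ and \eqref{eq:def-dom-split}, after slightly decreasing $\lambda$ I may pick $T_0>0$ so that
\begin{align*}
  \|A_{T_0}(x)|_{F_-(x)}\|\cdot\|(A_{T_0}(x)|_{F_+(x)})^{-1}\|\le e^{-2\lambda T_0}, \quad x\in\Lambda,
\end{align*}
and by passing to a large iterate $NT_0$ I may take this right-hand side as small as I wish. It thus suffices to produce a new Riemannian metric on $E_\Lambda$, with $F_-$ and $F_+$ orthogonal, for which the ratio estimate at the single time $T_0$ passes to a continuous-time estimate with the stated exponential rate.

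Next, I would define new inner products on the two subbundles by weighted telescoping sums of norms of iterates along the orbit. Concretely, on $F_-$ one takes
\begin{align*}
  |u|_x^{\prime 2} := \sum_{k=0}^{N-1} w_k(x)\,\|A_{kT_0}(x)u\|^2, \quad u\in F_-(x),
\end{align*}
and analogously on $F_+$, with the crucial requirement that the two sequences of weights be \emph{coupled} through a common exponential parameter tied to $\lambda$. The telescoping is arranged so that advancing one step of time $T_0$ leaves only a boundary term at $k=N$ on $F_-$ and at $k=0$ on $F_+$; the discrete domination inequality then bounds the ratio $|A_{T_0}u|'/|A_{T_0}v|'$ by a definite factor strictly less than $e^{-\lambda T_0}$ times $|u|'/|v|'$, for unit vectors $u\in F_-(x)$, $v\in F_+(x)$. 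A smooth rescaling of the metric along each flow orbit between the discrete times $kT_0$ then interpolates the discrete estimate into the continuous one for all $t>0$, at the cost of a slight further decrease of $\lambda$.

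Finally, continuity of the cocycle and of the invariant splitting $F_-\oplus F_+$ on $\Lambda$ allow the new inner product to be extended continuously from $E_\Lambda$ to $E_V$ for a sufficiently small neighborhood $V\subset U$ of $\Lambda$, using standard persistence of dominated splittings under small perturbations of the base point (in the spirit of Lemma~\ref{le:trapping}); shrinking $V$ if necessary preserves the strict inequality. The main technical obstacle is the coupled construction of the adapted norms: in the purely dominated regime, neither $\|A_{T_0}|_{F_-}\|$ nor $\|(A_{T_0}|_{F_+})^{-1}\|$ is individually controlled, so the weights on $F_-$ and on $F_+$ cannot be chosen independently --- they must be matched so that the individual growth or decay rates cancel out, leaving only the controlled ratio. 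Working out this matching, together with the interpolation for non-integer multiples of $T_0$, is precisely the content of the adapted-metric lemma of Gourmelon \cite{Goum07} cited here.
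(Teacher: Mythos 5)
The paper offers no internal proof of Theorem~\ref{thm:phyp-adapted-metric}: it is quoted as a known result of Gourmelon \cite{Goum07} and used as a black box in the proof of Theorem~\ref{thm:necessity}. In substance your proposal does the same thing, since the one genuinely hard step --- producing inner products on the two subbundles \emph{separately} whose ratio improves at every time step even though, under mere domination, neither $\|A_{T_0}(x)\vert_{F_-(x)}\|$ nor $\|(A_{T_0}(x)\vert_{F_+(x)})^{-1}\|$ is individually controlled --- is exactly the step you defer to \cite{Goum07}. What you add is a Mather-style heuristic (finite weighted sums of iterates, telescoping at time $T_0$, interpolation for intermediate times, continuous extension off $\Lambda$), and you correctly locate the obstacle; but be aware that, as written, the telescoping paragraph is not yet an argument: with independently chosen exponential weights the boundary terms involve $\|A_{NT_0}(x)u\|$ on $F_-$ and backward iterates on $F_+$, which in the purely dominated regime can be arbitrarily large, so the claimed one-step bound on the ratio $|A_{T_0}u|'/|A_{T_0}v|'$ does not follow without the point-dependent matching of the two norms. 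That matching is precisely Gourmelon's contribution; equivalently, domination is uniform contraction of the induced cocycle $L\mapsto A_t\vert_{F_-}\circ L\circ (A_t\vert_{F_+})^{-1}$ on $\mathrm{Hom}(F_+,F_-)$, for which the classical adapted-norm construction works immediately, and the difficulty is to realize the resulting adapted norm as the operator norm coming from a pair of Riemannian metrics on $F_\pm$. Two small further remarks: the inequality in the statement is only required at points $x\in\Lambda$, so any continuous extension of the inner product to $E_V$ suffices and no persistence argument is needed there; and Lemma~\ref{le:trapping} concerns persistence of trapping regions, not of dominated splittings, so it is not the right reference for that step.
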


\begin{remark}
  A similar result holds for the existence of adapted metric
  for partially hyperbolic and for uniformly hyperbolic
  splittings. Moreover, in
    the adapted metric the bundles $F_\pm$ over
    $\Lambda$ are almost orthogonal, that is, given
    $\epsilon>0$ it is
    possible to construct such metrics so that
    $|<<v_-,v_+>>|<\epsilon$ for all $v_\pm\in F_\pm$
    with $\J(v_\pm)=\pm1$. However this property will
    not be used in what follows.
\end{remark}

We may assume, without loss of generality, that $V$ given by
Theorem~\ref{thm:phyp-adapted-metric} coincides with $U$.
Now, we use the adapted riemannian metric to define the
quadratic form on a smaller neighborhood of $\Lambda$ inside
$U$.

\subsubsection{Construction of the field of quadratic forms}
\label{sec:constr-field-quadrat}

First we choose a continuous field of orthonormal
basis (with respect to the adapted metric)
$\{e_1(x),\dots, e_s(x)\}$ of $F_-(x)$ and
$\{e_{s+1}(x),\dots, e_{s+c}(x)\}$ of $F_+(x)$ for
$x\in\Lambda$, where $s=\dim F_-$ and $c=\dim
F_+$. Then $\{e_i(x)\}_{i=1}^{s+c}$ is a basis for
$E_x$, $x\in \Lambda$.

Secondly, we consider the following quadratic forms
\begin{align*}
  \J_x(v)=\J_x\left(\sum_{i=1}^{s+c}\alpha_ie_i(x)\right)
  :=|v^+|^2-|v^-|^2
  =
  \sum_{i=s+1}^{s+c} \alpha_i^2 - \sum_{i=1}^s \alpha_j^2,
  \quad v\in E_x , x\in V,
\end{align*}
where $v^\pm\in F_\pm(x)$ are the unique orthogonal
projections on the subbundles such that $v=v^-+v^+$.
This defines a field of quadratic forms on $\Lambda$.

We note that, since $F_-\oplus F_+$ is $A_t(x)$-invariant
over $\Lambda$, and the vector field $X$ and the flow $X_t$
are $C^1$, the field of quadratic forms constructed above is 
differentiable along the flow direction, because
$F_\pm(X_t(x))=A_t(x)\cdot F_\pm(x)$ is differentiable
in $t\in\RR$ for each $x\in\Lambda$.

Clearly $F_-$ is a $\J$-negative subspace and $F_+$ is a
$\J$-positive subspace, which shows that the index of $\J$
equals $s$ and that the forms are non-degenerate.

In addition, we have strict $\J$-separation over
$\Lambda$. Indeed, $v=v^-+v^+\in C_+(x)\cup C_0(x)$ for
$x\in\Lambda$ means $|v^+|\ge |v^-|$ and the
$A_t(x)$-invariance of $F_\pm$ ensures that
$A_t(x)v=A_t(x)v^-+A_t(x)v^+$ with $A_t(x)v^\pm\in
F_\pm(X_t(x))$ and $\sqrt{\J(A_t(x)v^+)}=
|A_t(x)v^+|>e^{\lambda t}|A_t(x)
v^-|=\sqrt{|\J(A_t(x)v^-)|}$, so that $A_t(x)v\in
C_+(X_t(x))$.

We are ready to obtain the reciprocal of item 3 of
Theorem~\ref{thm:J-separated-tildeJ}.

\begin{proposition}
  \label{pr:strict-J-sep}
  If the cocycle $A_t(x)$ is strictly $\J$-separated over a
  compact $X_t$-invariant subset $\Lambda$, then there exist
  a compatible field of quadratic forms $\J_0$ and a
  function $\delta:\Lambda\to\RR$ such that $\tilde
  \J_{0,x}>\delta(x)\J_0$ for all $x\in\Lambda$.
\end{proposition}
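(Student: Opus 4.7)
The plan is to produce $\J_0$ via the construction of Section~\ref{sec:constr-field-quadrat} applied to the dominated splitting that strict $\J$-separation already guarantees, and then to extract the pointwise strict inequality $\tilde J_{0,x} > \delta(x) J_0$ by feeding the quantitative rate from the adapted metric into the strict clause of Lemma~\ref{le:kuhne}.

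First I invoke the already-proved direction of Theorem~\ref{thm:strict-J-separated-cocycle}: strict $\J$-separation on $\Lambda$ provides a dominated splitting $E_\Lambda = F_-\oplus F_+$ with $\dim F_- = q$ equal to the index of $\J$. I then apply Theorem~\ref{thm:phyp-adapted-metric} to obtain an adapted Riemannian metric $\langle\langle\cdot,\cdot\rangle\rangle$ with induced norm $|\cdot|$ on a neighborhood of $\Lambda$, together with a uniform rate $\lambda>0$ such that
\begin{align*}
  \|A_t(x)|_{F_-(x)}\|\cdot\|(A_t(x)|_{F_+(x)})^{-1}\|<e^{-\lambda t}, \quad x\in\Lambda,\ t>0.
\end{align*}
Writing $v=v^-+v^+$ for the unique decomposition along $F_-(x)\oplus F_+(x)$, I set $\J_{0,x}(v):=|v^+|^2-|v^-|^2$, which is a non-degenerate field of quadratic forms of index $q$, differentiable along the flow because $F_\pm$ are $A_t$-invariant and $X_t$ is $C^1$. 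Since both $\J$ and $\J_0$ are positive definite on $F_+$ and negative definite on $F_-$, Lemma~\ref{le:ratio} combined with the equivalence of the original and adapted norms on the compact set $\Lambda$ yields a constant $C>1$ establishing the compatibility $\J\sim\J_0$.

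The decisive step, which I expect to be the main obstacle, is to promote non-negativity of $\tilde J_0$ on the null cone into \emph{strict} positivity, so as to exploit the strict conclusion of Lemma~\ref{le:kuhne}. By item~(1) of Theorem~\ref{thm:J-separated-tildeJ} one has $\langle\tilde J_{0,x}v,v\rangle=\partial_t\J_0(A_t(x)v)|_{t=0}$. For $v=v^-+v^+\in C_0^{\J_0}(x)\setminus\{\vec0\}$, i.e. $|v^+|=|v^-|=a>0$, the $A_t$-invariance of $F_\pm$ and the adapted-metric bound give, for every $t>0$,
\begin{align*}
  \J_0(A_t(x)v)=|A_t(x)v^+|^2-|A_t(x)v^-|^2>(1-e^{-2\lambda t})|A_t(x)v^+|^2.
\end{align*}
Dividing by $t$ and letting $t\searrow0$ produces $\langle\tilde J_{0,x}v,v\rangle\ge 2\lambda a^2>0$. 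The \emph{uniform} rate $\lambda$ supplied by the adapted metric is what makes this work; mere dominated behavior only yields asymptotic, not instantaneous, separation, which would be insufficient to conclude a strict pointwise inequality between $\tilde J_{0,x}$ and a multiple of $J_0$.

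With strict positivity of the symmetric bilinear form $F(v,w)=\langle\tilde J_{0,x}v,w\rangle$ on $C_0^{\J_0}(x)\setminus\{\vec 0\}$ secured, the strict clause of Lemma~\ref{le:kuhne} yields $r_-(x)<r_+(x)$ at every $x\in\Lambda$, and any choice $\delta(x)\in(r_-(x),r_+(x))$ satisfies $\tilde J_{0,x}>\delta(x)J_0$ in the sense of quadratic forms. The explicit bounds in item~(6) of Theorem~\ref{thm:J-separated-tildeJ} furthermore make it possible to select $\delta$ continuously on $\Lambda$, completing the proposed argument.
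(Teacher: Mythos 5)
Your proposal is correct and follows essentially the same route as the paper's own proof: derive the dominated splitting from strict $\J$-separation, build $\J_0$ from the adapted metric of Theorem~\ref{thm:phyp-adapted-metric} as $|v^+|^2-|v^-|^2$, bound $\J_0(A_t(x)v)$ from below on the null cone using the uniform rate $e^{-\lambda t}$, differentiate at $t=0$ to get strict positivity of $\tilde\J_{0,x}$ on $C_0\setminus\{\vec0\}$, and conclude via the strict clause of Lemma~\ref{le:kuhne}. Your lower bound $(1-e^{-2\lambda t})|A_t(x)v^+|^2$ is just a trivially equivalent rearrangement of the paper's $(e^{2\lambda t}-1)|A_t(x)v^-|^2$, and the compatibility and continuity-of-$\delta$ remarks match the paper's use of Lemma~\ref{le:ratio} and item~(6) of Theorem~\ref{thm:J-separated-tildeJ}.
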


\begin{proof}
  We have already shown that a strictly $\J$-separated
  cocycle has a dominated splitting $E=F_-\oplus F_+$ in
  Section~\ref{sec:j-separat-cocycl}.  Then we build the
  field of quadratic forms $\J_0$ according to the previous
  arguments in this section, and calculate for $v_0\in
  C_0(x), v_0=v^-+v^+$ with $v^\pm\in F_\pm(x)$ and
  $|v^\pm|=1$, for a given $x\in\Lambda$ and all $t>0$
  \begin{align}\label{eq:J0-derivative}
    \J_0(A_t(x)v_0)
    =
    |A_t(x)v^-|^2\left(\frac{|A_t(x)v^+|^2}{|A_t(x)v^-|^2}
      -1\right)
    \ge
    |A_t(x)v^-|^2\cdot(e^{2\lambda t}-1).
  \end{align}
  The derivative of the right hand side above satisfies
  \begin{align*}
    2\lambda e^{2\lambda t}|A_t(x)v^-|^2+ (e^{2\lambda
      t}-1)\partial_t|A_t(x)v^-|^2
    \xrightarrow[t\searrow0]{}2\lambda.
  \end{align*}
  Since the left hand side and the right hand side of
  (\ref{eq:J0-derivative}) have the same value at $t=0$ (we
  note that $\J_0(v_0)=0$ by the choice of $v_0$), we have
  \begin{align*}
    \tilde
    \J_x(v_0)=\partial_t\J_0(A_t(x)v_0)\mid_{t=0}\ge2\lambda>0,
    \quad x\in\Lambda.
  \end{align*}
  Thus, $\tilde \J_x(v_0)>0$ for $\vec0\neq v_0\in C_0(x)$
  which implies by Lemma~\ref{le:kuhne} that $\tilde
  \J_x>\delta(x)\J_0$ for some real function $\delta(x)$.
  Finally, the quadratic forms $\J_0$ and $\J$ are
  compatible.
\end{proof}

\subsubsection{Continuous/smooth extension to a neighborhood}
\label{sec:contin-extens-neighb}

We recall that the adapted Riemannian metric
  is defined on a neighborhood $V$ of $\Lambda$. We
  can write $\J_x(v)=<<J_x(v),v>>_x$ for all $v\in T_xM$,
  $x\in\Lambda$, where $J_x:T_xM\circlearrowleft$ is a
  self-adjoint operator.  This operator can be represented
  by a matrix (with respect to the basis adapted to $\J_x$)
  whose entries are continuous functions of $x\in\Lambda$.

These functions can be extended to
  continuous functions on $V$ yielding a continuous
  extension $\hat\J$ of $\J$. We recall that the field $\J$
  is differentiable along the flow direction. Thus $\hat\J$
  remains differentiable along the flow direction over the
  points of $\Lambda$.

Finally, these functions can then be $C^1$ regularized so
that they become $\epsilon$-$C^0$-approximated by $C^1$
functions on $V$. We obtain in this way a smooth extension $\bar\J$ of $\J$ to
a neighborhood of $\Lambda$ in such a way that $\bar\J$ is
automatically $C^1$ close to $\J$ over orbits of the flow on
$\Lambda$. This means that, given $\epsilon>0$, we can find
$\bar \J$ such that
\begin{itemize}
\item $|\hat\J_y(v)-\bar\J_y(v)|<\epsilon$ for all $v\in E_y,
  y\in V$ ($C^0$-closeness on $V$);
\item
  $|\partial_t\hat\J_{X_t(x)}(A_t(x)v)-\partial_t\bar\J_{X_t(x)}(A_t(x)v)|<\epsilon$
  for all $v\in E_x, x\in \Lambda$ and $t\in\RR$.
\end{itemize}

\begin{remark}
  \label{rmk:equivalent-qforms}
    We note that $F_\pm(x)$ are subspaces
    with the same sign for both $\J$ and $\bar\J$. Hence
    $\pm\J$ and $\pm\bar\J$ define inner products in these
    finite dimensional vector spaces, thus we can find
    $C_\pm(x)$ such that $C_\pm(x)^{-1}|\J\mid_{F_\pm(x)}|
    \le|\bar\J\mid_{F_\pm(x)}|\le
    C_\pm(x)|\J\mid_{F_\pm(x)}|$. This ensures that $\J$ and
    $\bar\J$ are equivalent forms over $\Lambda$: since $\J$
    and $\bar\J$ are continuous on $\Lambda$ we just have to
    take $C=\max\{C_\pm(x): x\in\Lambda\}$. Moreover we
  also have that the Riemannian norm $\|\cdot\|$ of $M$
and the adapted norm $|\cdot|$ are also equivalent: we
can assume that $C^{-1}|\cdot|\le \|\cdot\| \le
C|\cdot|$.
\end{remark}

Using the compatibility between $\J$ and
  $\bar\J$ we obtain for $v_\pm\in F_\pm(x)$
\begin{align*}
  |\bar\J(A_t(x)v_-)|
  &\le
  C|\J(A_t(x)v_-)|
  \le
  C\cdot K\|A_t(x)v_-\|^2
  \le
  CK \cdot C^2|A_t(x)v_-|^2
  \\
  &\le
   C^3K\cdot e^{-2\lambda t}|A_t(x)v_+|^2
  \le
  C^4K e^{-2\lambda t}\|A_t(x)v_+\|^2
  \\
  &\le
  C^4K e^{-2\lambda t}\cdot K
  \cdot\J(A_t(x)v_+)
  \le
  C^4K^2 e^{-2\lambda t}
  \cdot C \bar \J(A_t(x)v_+)
  \\
  &\le
  \kappa e^{-2\lambda t}\bar \J(A_t(x)v_+)
\end{align*}
for all $t>0$, where we used Lemma~\ref{le:ratio} together
with Theorem~\ref{thm:phyp-adapted-metric}.

Therefore we have the relations in the statement of
Theorem~\ref{thm:necessity} if we set
$\omega=2\lambda$.

\subsubsection{Strict separation for the extension/smooth approximation}
\label{sec:strict-separat-exten}

We now show that $Y$ is strictly $\bar\J$-separated on $V$
for every vector field $Y$ in a neighborhood $\V$ of $X$ and
for every multiplicative cocycle $B_t(x)$ over $Y$ which is
$C^0$ close to $A_t(x)$.

We start by observing that $\bar\J$ is differentiable along
the flow direction. Then we note that, from
Proposition~\ref{pr:strict-J-sep}
\begin{align*}
  \iota:=\inf\{ \widetilde \J_x-\delta(x) \J_x : x\in\Lambda\} >0
\end{align*}
and recall that $\widetilde J_x= J_x\cdot
D(x)+D(x)^*\cdot J_x$.  Hence, by choosing $V$ sufficiently
small around $\Lambda$, we obtain
\begin{align*}
  \hat\iota=\inf\{\widetilde{\hat \J_y}-\delta(y)\hat \J_y : y\in V\}
  \ge\frac\iota2 >0,
\end{align*}
since $\hat\J$ is an extension of $\J$ on $\Lambda$, and the
function $\delta$ is defined by $\hat J$ and $D(x)$
according to Remark~\ref{rmk:axcont}. Finally, by taking a
sufficiently small $\epsilon>0$ in the choice of the $C^1$
approximation $\bar J$, we also get
\begin{align*}
  \bar\iota=\inf\{\widetilde{\bar \J}_y-\delta(y)\bar \J_y :
  y\in V\}\ge\frac{\hat\iota}2\ge\frac\iota4>0.
\end{align*}
From Theorem~\ref{thm:J-separated-tildeJ} and
Proposition~\ref{pr:strict-J-sep}, we know that this is a
necessary and sufficient condition for strict
$\bar\J$-separation of $A_t(x)$ over $V$.

\subsubsection{Strict separation for nearby flows/cocycles}
\label{sec:strict-separat-nearb}

Given a vector field $X$ on $M$ and a linear multiplicative
cocycle $A_t(x)$ on a vector bundle $E$ over $M$, for a
$C^1$ close vector field $Y$ and a $C^0$ close cocycle
$B_t(x)$ over $Y$, the infinitesimal generator $D_{B,Y}(x)$ of $B_t(x)$
will be a linear map close to the infinitesimal generator
$D(x)$ of $A_t(x)$ at $x$. That is, given $\epsilon>0$ we
can find a $C^1$ neighborhood $\V$ of $X$ and a $C^0$
neighborhood $\W$ of the cocycle $A$ such that
\begin{align*}
  (Y,B)\in\V\times\W\implies \|D_{B,Y}(x)-D(x)\| < \epsilon,
  \quad x\in M.
\end{align*}
Hence, since $\delta$ also depends continuously on the
infinitesimal generator, we obtain
\begin{align*}
  \tilde\iota=\inf\{\widetilde{\bar\J}_y -
  \delta_{B,Y}(y)\bar\J_y : y\in V, Y\in\V, B\in\W\} \ge
  \frac{\bar\iota}2 >0.
\end{align*}
This shows that we have strict $\bar\J$ separation for all
nearby cocycles over all $C^1$-close enough vector fields
over the same neighborhood $V$ of the original invariant
attracting set $\Lambda$.

Finally, to obtain the inequalities of the statement of
Theorem~\ref{thm:necessity}, since we have strict
$\bar\J$-separation for $B_t(x)$, we also have a dominated
splitting $E_{\Lambda_Y}=F^B_-\oplus F^B_+$ over
$\Lambda_Y$ whose subbundles have the same sign as the
original $F_\pm$ subbundles of $E_\Lambda$ for
$A_t(x)$. We can then repeat the arguments leading to
the constant $\kappa$, which depends continuously on
$Y$.

This completes the proof of Theorem~\ref{thm:necessity}.


\subsection{Characterization of the splitting through the
  function $\delta$}
\label{sec:charact-splitt-throu}

We now use the area under the function $\delta$ to
characterize different dominated splittings that may arise
in our setting.

\begin{theorem}
  \label{thm:char-dom-split}
  Let $\Gamma$ be a compact invariant set for $X_t$
  admitting a dominated splitting $E_\Gamma= F_-\oplus F_+$
  for $A_t(x)$, a linear multiplicative cocycle over
  $\Gamma$ with values in $E$. Let $\J$ be a $C^1$ field of
  indefinite non-degenerate quadratic forms such that
  $A_t(x)$ is strictly $\J$-separated, admitting a function
  $\delta:\Gamma\to\RR$ as given in
  Theorem~\ref{thm:J-separated-tildeJ}. Then
  \begin{enumerate}
  \item If
    $\Delta_s^t(x)\xrightarrow[(t-s)\to+\infty]{}-\infty$
    for all $x\in\Gamma$, then $F_-$ is a uniformly
    contracted subbundle.
  \item If $\Delta_s^t(x)\xrightarrow[(t-s)\to+\infty]{} +\infty$
for all $x\in\Gamma$, then $F_+$ is a uniformly expanding subbundle.
  \item $F_-\oplus F_+$ is hyperbolic (that is, $F_-$ is
    uniformly contracted and $F_+$ is uniformly expanded)
    if, and only if, there exists a compatible field
    of quadratic forms $\J_0$ in a neighborhood of $\Gamma$ such
    that $\J_0'(v)>0$ for all $v\in E_x$ and all $x\in\Gamma$.
  \end{enumerate}
\end{theorem}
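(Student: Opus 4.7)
The plan is to combine the exponential comparison of Theorem~\ref{thm:J-separated-tildeJ}(4), namely $|\J(A_t(x)v)|\ge|\J(v)|\exp\Delta_0^t(x)$ on positive cone vectors, with the invariant splitting $E_\Gamma=F_-\oplus F_+$ provided by strict $\J$-separation via Theorem~\ref{thm:strict-J-separated-cocycle}. Throughout I pass between $\J$ and the adapted squared norm of Theorem~\ref{thm:phyp-adapted-metric} on each subbundle $F_\pm$ via Lemma~\ref{le:ratio}, and use continuity of $\delta$ (Remark~\ref{rmk:axcont}) together with compactness of $\Gamma$ to promote pointwise statements to uniform ones.

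For item (1), the sufficient direction takes nonzero $v\in F_+(x)$, where $\J(v)>0$, and applies the comparison to obtain $\J(A_t(x)v)\ge\J(v)\exp\Delta_0^t(x)$. Pointwise divergence $\Delta_0^t(x)\to+\infty$ combined with continuity of $\delta$ on $\Gamma$ yields a uniform time $T>0$ with $\Delta_0^T(x)\ge\log 2$ for every $x\in\Gamma$; the cocycle identity $\Delta_0^{nT}(x)=\sum_{k=0}^{n-1}\Delta_0^T(X_{kT}(x))$ upgrades this to $\Delta_0^{nT}(x)\ge n\log 2$, and Lemma~\ref{le:ratio} transfers the bound to uniform exponential expansion of $F_+$. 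For the necessary direction, I saturate $\delta$ to the sharp upper bound of Theorem~\ref{thm:J-separated-tildeJ}(6), $\delta(x)=\inf_{v\in C_+(x)}\J'(v)/\J(v)$, and observe that this infimum is attained along the core directions $F_+(x)$ of $C_+(x)$: vectors approaching $C_0$ have $\J(v)\to 0$ while $\J'(v)$ stays strictly positive by strict separation, so $\J'/\J$ blows up at the boundary. Uniform expansion of $F_+$ therefore forces $\Delta_0^t\to+\infty$. Item (2) follows from item (1) by time reversal: Remark~\ref{rmk:strictly-J-separated} identifies $A_{-t}$ as strictly $(-\J)$-separated with $F_\pm$ interchanged, while the infinitesimal generator, $\tilde J$, and $\delta$ all change sign, converting $\Delta_s^t\to-\infty$ for $A_t$ into the condition of item~(1) for $A_{-t}$ with respect to $-\J$.

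For item (3), the sufficient direction exploits that $\J_0'(v)>0$ on all nonzero $v\in E_\Gamma$ means $\tilde J_{0,x}$ is positive definite; by compactness of $\Gamma$ there is $c>0$ with $\tilde J_{0,x}\ge c\operatorname{Id}$. Choosing $\delta_0>0$ small enough that $\delta_0\sup_{\Gamma}\|J_0\|\le c$ yields $\tilde J_{0,x}\ge\delta_0 J_{0,x}$ on all of $E_\Gamma$ (trivially on $C_-\cup C_0$ where $-\delta_0 J_0\ge 0$, and via the norm bound on $C_+$), whence $\Delta_0^t(x)\ge\delta_0 t\to+\infty$; item (1) then gives uniform expansion of $F_+$, while the same argument applied to the time-reversed cocycle with $-\J_0$ gives uniform contraction of $F_-$. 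Conversely, starting from a hyperbolic splitting, I construct $\J_0$ as in Section~\ref{sec:constr-field-quadrat}, setting $\J_0(v):=|v^+|^2-|v^-|^2$ for $v=v^-+v^+\in F_-(x)\oplus F_+(x)$ in the adapted metric. Compatibility with $\J$ comes from Remark~\ref{rmk:equivalent-qforms}; differentiating $\J_0(A_tv)=|A_tv^+|^2-|A_tv^-|^2$ at $t=0$ and inserting the adapted exponential rates of Theorem~\ref{thm:phyp-adapted-metric} produces $\J_0'(v)\ge 2\lambda(|v^-|^2+|v^+|^2)>0$ for every nonzero $v$.

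The main obstacle is the necessary direction of item (1): showing rigorously that the sharp $\delta$ of Theorem~\ref{thm:J-separated-tildeJ}(6) is controlled from below by the expansion rate on $F_+$, and not spoiled by the behavior of $\J'/\J$ near $C_0$. Making this identification precise requires analysis of the polar decomposition $L=RU$ of Proposition~\ref{pr:J-separated-spectrum} to relate the smallest positive $R$-eigenvalue $r_+^1$ to the minimal expansion rate achieved on $F_+$ in the adapted metric, and then integrating this identification along orbits; the corresponding statement for item (2) is symmetric via time reversal.
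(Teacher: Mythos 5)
The decisive problem is the necessity half of items (1)--(2), which you yourself flag as ``the main obstacle'' and leave unproven. Your plan --- saturate $\delta$ at $\inf_{v\in C_+(x)}\J'(v)/\J(v)$, argue that this infimum is attained along $F_+(x)$, and deduce $\Delta_0^t(x)\to+\infty$ from uniform expansion of $F_+$ --- does not go through as sketched. Uniform expansion of $F_+$ is an asymptotic property: the instantaneous rate $\J'(v)/\J(v)$ on $F_+(x)$, hence your saturated $\delta(x)$, can be negative at individual points even when $F_+$ is uniformly expanded, unless you first replace $\J$ by a compatible adapted form $\J_0$ --- but that changes $\delta$ and $\Delta$, which the statement ties to the given $\J$; and the claim that the infimum over the open cone $C_+$ is attained on the invariant subspace $F_+$ is precisely the polar-decomposition analysis you defer, not something you may ``observe''. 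There is also a consistency issue: saturating $\delta$ at the upper end in the necessity direction while using some other admissible $\delta$ in the sufficiency direction breaks the ``if and only if'', since different admissible choices of $\delta$ can have different signs of $\Delta$. The paper's necessity argument is different and much shorter: it works directly with the integrated comparison of Theorem~\ref{thm:J-separated-tildeJ} (items (4) and (6)) applied to vectors of $F_\pm$ --- the two-sided bound $\sup_{C_-}\J'/\J\le\delta\le\inf_{C_+}\J'/\J$, integrated along the orbit, compares $\Delta_s^t(x)$ with the growth of $|\J|$ on the subbundles --- with no need to locate where the infimum is attained nor to invoke Proposition~\ref{pr:J-separated-spectrum}. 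As written, your proposal does not establish the ``only if'' in (1)--(2).

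A secondary gap sits in your sufficiency argument: pointwise divergence $\Delta_0^t(x)\to+\infty$ ``combined with continuity of $\delta$'' does not by itself yield a single $T>0$ with $\Delta_0^T(x)\ge\log 2$ for every $x\in\Gamma$, because $\Delta_0^t(x)$ is not monotone in $t$; such a uniform $T$ does exist, but you need either a stopping-time/finite-cover argument exploiting the additive cocycle property and the boundedness of $\delta$, or, as the paper does, to pass first to pointwise growth of $|\J(A_t(x)v_+)|$ and then invoke the pointwise-to-uniform Lemma~\ref{le:unif-behav-lim0}. By contrast, your item (2)-by-time-reversal (via Remark~\ref{rmk:strictly-J-separated}) is a legitimate shortcut compared with the paper's symmetric computation, and your item (3) --- positive definiteness of $\tilde J_{0}$ forcing $\Delta\ge\delta_0 t$ in one direction, and the construction of $\J_0$ from the adapted metric with $\J_0'(v)\ge 2\lambda$ in the other --- is essentially the paper's argument; so the proposal stands or falls with the two points above.
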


Above we write $\J'(v)= <\tilde J_x v, v>$ where
$\tilde J_x$ is given in Proposition~\ref{pr:J-separated}.

In the proof we use the following useful equivalence.

\begin{lemma}
  \label{le:unif-behav-lim0}
  Let $F\subset E$ be a continuous $A_t(x)$-invariant
  subbundle of the finite dimensional vector bundle $E$
  with compact base $\Lambda$. Then, there are
  constants $K,\omega>0$ satisfying for $\vec0\neq v\in
  F_x, x\in\Lambda, t>0$
  \begin{align*}
    \|A_t(x)v\|\le K e^{-\omega t} \|v\| \quad
    (\|A_{-t}(x)v\|\le K e^{-\omega t} \|v\|, \text{
      respectively})
  \end{align*}
  if, and only if, for every $x\in\Lambda$ and $\vec0\neq
  v\in E_x$
  \begin{align*}
    \lim\limits_{t\to+\infty}\|A_t(x)v\| = 0 \quad
    (\lim\limits_{t\to+\infty}\|A_{-t}(x)v\| = 0, \text{
      respectively}).
  \end{align*}
\end{lemma}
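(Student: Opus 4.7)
The ``only if'' direction is trivial, since $\|A_t(x)v\|\le Ke^{-\omega t}\|v\|$ forces $\|A_t(x)v\|\to 0$. I will concentrate on the substantive direction: pointwise convergence $\|A_t(x)v\|\to 0$ on a continuous invariant subbundle over a compact base implies a uniform exponential estimate. The reverse-time case follows by applying the same argument to the inverse cocycle.

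The overall plan is to promote the pointwise decay to a uniform decay of the form $\|A_T(x)v\|\le\tfrac12\|v\|$ at a single large time $T$, then iterate via the cocycle property. Let $S=\{(x,v):x\in\Lambda,\ v\in F_x,\ \|v\|=1\}$, which is compact since $\Lambda$ is compact and $F$ is a continuous finite-dimensional subbundle. The function $(t,x,v)\mapsto \|A_t(x)v\|$ is jointly continuous in all three variables by the smooth dependence of the cocycle on initial data and on the point.

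The first intermediate step is a uniform bound: I claim $M:=\sup\{\|A_t(x)v\|:t\ge 0,\ (x,v)\in S\}<\infty$. The pointwise supremum $g(x,v)=\sup_{t\ge 0}\|A_t(x)v\|$ is finite (the convergent sequence is bounded). As a supremum of continuous functions, $g$ is lower semicontinuous, so each sublevel set $E_k:=\{(x,v)\in S:g(x,v)\le k\}$ is closed. The sets $E_k$ are nested and cover $S$, so by compactness some $E_{k_0}$ equals $S$, which gives the uniform bound $M\le k_0$. The second intermediate step is uniform convergence to $0$. Set $h_n(x,v):=\sup_{t\ge n}\|A_t(x)v\|$; then $h_n$ is lower semicontinuous, nonincreasing in $n$, and $h_n(x,v)\to 0$ pointwise on $S$. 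For a prescribed $\epsilon>0$ the open sets $\{h_n<\epsilon\}$ form an increasing cover of $S$, so compactness yields an $n_0$ with $h_{n_0}<\epsilon$ everywhere on $S$.

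Choosing $\epsilon=\tfrac12$ provides a time $T>0$ such that $\|A_T(x)v\|\le\tfrac12\|v\|$ for every $x\in\Lambda$ and every $v\in F_x$. Writing $t=nT+s$ with $0\le s<T$ and applying the cocycle identity $A_t(x)=A_s(X_{nT}(x))\circ A_{nT}(x)$ together with the uniform bound $M$ on the window $[0,T]$ gives
\begin{equation*}
\|A_t(x)v\|\le M\,2^{-n}\|v\|\le 2M\,e^{-\omega t}\|v\|,\qquad \omega=\tfrac{\log 2}{T},
\end{equation*}
which is the desired estimate with $K=2M$. For the backward statement, replace $A_t$ by $A_{-t}$ throughout; the cocycle property, continuity, and compactness arguments apply verbatim.

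The main obstacle is upgrading pointwise information to uniform information on the compact unit sphere bundle $S$, since the sup over the unbounded parameter $t$ destroys continuity. The key point that unlocks the argument is that suprema of continuous functions are lower semicontinuous, so their sublevel sets are closed and their strict sublevel sets are open, which is enough to exploit the compactness of $S$ twice (once for the uniform bound, once for the uniform convergence) by the standard nested-cover argument.
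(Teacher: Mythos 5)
Your overall strategy (find a time at which every unit vector of $F$ is contracted by $\tfrac12$, then iterate via the cocycle property, controlling the remainder on a compact time window) is the standard route to this folklore result, which the paper does not prove but simply attributes to \cite{Man82}. However, both of your compactness steps are invalid as written, and the second one is the crux of the matter. A supremum of continuous functions is lower semicontinuous, which makes the sublevel sets $\{g\le k\}$ and $\{h_n\le \epsilon\}$ \emph{closed}, not the strict sublevel sets open: $\{h_n<\epsilon\}$ is open only for \emph{upper} semicontinuous functions, so your Dini-type argument ("the open sets $\{h_n<\epsilon\}$ form an increasing cover") does not go through; upper semicontinuity of $h_n$ is essentially equivalent to the uniformity you are trying to prove, so assuming it would be circular. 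The first step has the same defect in a different guise: nested closed sets covering a compact space need not include the whole space (take $E_k=\{0\}\cup[1/k,1]$ in $[0,1]$); compactness applies to open covers, and Baire category would only give you an $E_{k_0}$ with nonempty interior, not all of $S$. So as written, the key upgrade from pointwise to uniform decay is not established.

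The gap is repairable by the classical argument, which avoids a single uniform time $T$ altogether. Since each fiber $F_x$ is finite dimensional, pointwise decay of $\|A_t(x)v\|$ for every $v\in F_x$ already gives $\|A_t(x)\vert_{F_x}\|\to 0$ in operator norm; hence for each $x$ there is a time $t(x)$ with $\|A_{t(x)}(x)\vert_{F_x}\|<\tfrac12$. For the \emph{fixed} time $t(x)$ the map $y\mapsto \|A_{t(x)}(y)\vert_{F_y}\|$ is genuinely continuous (continuity of the cocycle and of the subbundle), so the same bound holds on a neighborhood of $x$; compactness of $\Lambda$ yields finitely many such neighborhoods with times bounded by some $T_0$. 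Iterating along the orbit with these point-dependent times, and using $M_0:=\sup\{\|A_s(y)\vert_{F_y}\|: y\in\Lambda,\ 0\le s\le T_0\}<\infty$ (continuity on the compact set $\Lambda\times[0,T_0]$, which is all you ever need, rather than your supremum over all $t\ge0$) to absorb the remainder, gives $\|A_t(x)v\|\le K e^{-\omega t}\|v\|$ with $\omega=\log 2/T_0$ and $K=2M_0$, and the reverse-time case follows as you say by passing to $A_{-t}$.
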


\begin{proof}
  See e.g. \cite{Man82}.
\end{proof}

\begin{proof}[Proof of Theorem~\ref{thm:char-dom-split}]
  \begin{enumerate}
  \item If
    $\Delta_0^t(x)\xrightarrow[t\to+\infty]{}-\infty$, then
    from item (2) of Theorem~\ref{thm:J-separated-tildeJ} we
    get $ \frac{\J(A_t(x)v)}{\J(v)}\le
    e^{\Delta_0^t(x)}\xrightarrow[t\to+\infty]{}0 $ for all
    $x\in\Gamma$ and $v\in F_-(x)$. So $F_-$ in uniformly
    contracted, by Lemmas~\ref{le:ratio}
    and~\ref{le:unif-behav-lim0}.
  \item 
    If $\Delta_s^t(x)\xrightarrow[(t-s)\to+\infty]{}
    +\infty$, then analogously $
    \frac{\J(A_t(x)v)}{\J(v)}\ge
    e^{\Delta_0^t(x)}\xrightarrow[t\to+\infty]{}+\infty $
    for all $x\in\Gamma$ and $v\in F_+(x)$. So $F_+$ in
    uniformly contracted, again by Lemmas~\ref{le:ratio}
    and~\ref{le:unif-behav-lim0}.
  \end{enumerate}

Now let us assume that $F_-\oplus F_+$ is a uniformly
hyperbolic splitting and take $\J_0$ the field of quadratic
forms provided by Theorem~\ref{thm:phyp-adapted-metric},
which is compatible with $\J$.

For $v=v_-+v_+\in E_x$ with $v_\pm\in
F_\pm(x)$ and $\J(v)>0$ (note that the difference below is
positive for small $|t|$)
  \begin{align}\label{eq:J0-derivative}
    \J_0(A_t(x)v)
    &=
    |A_t(x)v_+|^2
    \left(1-\frac{|A_t(x)v_-|^2}{|A_t(x)v_+|^2}\right)
    \ge
    |A_t(x)v_+|^2\left(1-e^{-2\lambda
        t}\frac{|v_-|^2}{|v_+|^2}\right).
  \end{align}
  The derivative of the right hand side above satisfies
  \begin{align*}
    2\lambda e^{2\lambda t}
    |A_t(x)v_+|^2\frac{|v_-|^2}{|v_+|^2}+
    \left(|v_+|^2-e^{2\lambda t}|v_-|^2\right)
    \frac{\partial_t|A_t(x)v_+|^2}{|v_+|^2}
    \xrightarrow[t\searrow0]{}
    2\lambda|v_-|^2+\J_0(v)\frac{\partial_t|A_t(x)v_+|^2}{|v_+|^2}.
  \end{align*}
  Since the left hand side and the right hand side of
  (\ref{eq:J0-derivative}) have the same value at $t=0$, the
  limit above is a lower bound for
  $\partial_t\J_0(A_t(x)v)\mid_{t=0}$. Because
  $\J_0(A_t(x)v_+)\ge e^{2\lambda t}\J_0(v)$ and $\J_0(v)>0$
  \begin{align*}
    \J_0'(v)=\partial_t\J_0(A_t(x)v)\mid_{t=0}
    \ge
    2\lambda|v_-|^2+2\lambda\J_0(v)=2\lambda|v_+|^2>0.
  \end{align*}
  Moreover, since for any non-zero vector $v_0=v_-+v_+$ in
  $C_0(x)$ we can make an arbitrarily small perturbation to
  $v_-$, keeping $v_+$, so that $\J_0(\tilde v_-+v_+)>0$, we
  obtain $\J_0'(\tilde v_-+v_+)\ge2\lambda|v_+|^2$ and so
  $\J_0'(v_0)>0$ for all non-zero $v_0$ in $C_0(x)$.
  Now for $\J_0(v)<0$ we have
  \begin{align}\label{eq:J0-derivative-neg}
    \J_0(A_t(x)v)
    &=
    |A_t(x)v_-|^2
    \left(\frac{|A_t(x)v_+|^2}{|A_t(x)v_-|^2}-1\right)
    \ge
    |A_t(x)v_-|^2\left(e^{2\lambda
        t}\frac{|v_+|^2}{|v_-|^2}-1\right)\nonumber
    \\
    &\ge
    e^{-2\lambda t}|v_-|^2\left(e^{2\lambda
        t}\frac{|v_+|^2}{|v_-|^2}-1\right)
    =|v_+|^2-e^{-2\lambda t}|v_-|^2
  \end{align}
  where we used domination in the first inequality and
  $\J_0(v)<0$ and $|t|$ small in the second inequality.  Hence
  $\J_0'(v)\ge2\lambda|v_-|^2>0$.  This completes the proof of
  the sufficient condition of item (3).

  Reciprocally, let us assume that $\J'$ is a positive
  definite quadratic form. Hence $\J'$ is an inner product
  on a finite dimensional vector bundle $E_\Gamma$ with
  compact base, and so there exists $\kappa>0$ such that
  $\J'\ge \kappa|\cdot|^2$ and
  $\kappa|\J|\le|\cdot|^2$. Thus $\J'(v)\ge \kappa^2|\J(v)|$
  for all $v\in E$, which implies $\J(A_t(x)v_+)\ge
  e^{\kappa^2 t}\J(v_+)$ for $\vec0\neq v_+\in F_+(x)$; and
  $|\J(A_t(x)v_-)|\le e^{-\kappa^2 t}|\J(v_-)|$ for all
  $\vec0\neq v_-\in F_-(x)$. This shows, from the comparison
  results given in Lemma~\ref{le:ratio}, that $F_-\oplus
  F_+$ is a uniformly hyperbolic splitting of $E_\Gamma$,
  and completes the proof of
  Theorem~\ref{thm:char-dom-split}.
\end{proof}


\section{Partial hyperbolicity - Proof of Theorem \ref{mthm:Jseparated-parthyp}}
\label{sec:strict-j-separat}

Now we prove Theorem~\ref{mthm:Jseparated-parthyp}. We show
that strict $\J$-separation of a $\J$-non-negative flow
$X_t$ on a trapping region $U$ implies the existence of a
dominated splitting and that the dominating bundle (the one
with the weakest contraction) is necessarily uniformly
contracting. That is, we have in fact a partially hyperbolic
splitting.

The strategy is to consider the derivative cocycle $DX_t$ of
the smooth flow $X_t$ in the place of $A_t(x)$ and use the
results of Section~\ref{sec:propert-j-separat}. In this
setting we have that the infinitesimal generator is given by
$D(x)=DX(x)$ the spatial derivative of the vector field
$X$. Since the direction of the flow $E^X_x:=\RR\cdot X(x) =
\{ s\cdot X(x): s\in\RR\}$ is $DX_t$-invariant for all $t\in\RR$, if $U$ is a
trapping region where $X_t$ is
$\J$-separated and $\J(X(x))\ge0$ for some $x\in U$, then
$\J(DX_t(X(x)))\ge0$ for all $t>0$ and this function is
bounded. 


We recall Lemma~\ref{le:trapping} and deduce the following.

\begin{corollary}
  \label{cor:persistence}
  Let $X_t$ be strictly $\J$-separated on $U$.
  Then there exist a neighborhood $\U$ of $X$ in
  $\Mundo$ and a neighborhood $V$ of $\Lambda(U)$ such
  that $V$ is a trapping region for every $Y\in\U$ and
  each $Y\in\U$ is strictly $\J$-separated in $V$.
\end{corollary}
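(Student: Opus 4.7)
The plan is to combine two robustness facts: persistence of trapping regions under $C^1$-small perturbations of the vector field from Lemma~\ref{le:trapping}, and persistence of the symmetric operator inequality that characterises strict $\J$-separation via Proposition~\ref{pr:J-separated}.

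First I would apply Lemma~\ref{le:trapping} to obtain a $C^1$-neighborhood $\U_1$ of $X$ in $\Mundo$ and an open set $V$ with $\Lambda(U)\subset V$ and $\overline{V}\subset U$, such that $V$ is a trapping region for every $Y\in\U_1$. Second, since $X_t$ is strictly $\J$-separated on $U$, Proposition~\ref{pr:J-separated} (necessity direction, which relies on Proposition~\ref{pr:strict-J-sep} together with the extension scheme of Section~\ref{sec:contin-extens-neighb}) supplies a compatible field of quadratic forms $\J_0$ and a continuous function $\delta$ on a neighborhood of $\overline{V}$ such that the symmetric operator
\begin{align*}
  \tilde J_{0,x}-\delta(x)J_0 = J_0\cdot DX(x)+DX(x)^*\cdot J_0-\delta(x)J_0
\end{align*}
is positive definite at every $x\in\overline{V}$, with a uniform lower bound $\eta>0$ on its smallest eigenvalue coming from compactness.

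The key step is a continuity argument in the $C^1$ topology. Because $Y\mapsto DY$ is continuous as a map from $\Mundo$ into the space of $C^0$ endomorphisms of $TM$, one can find a $C^1$-neighborhood $\U_2\subset\U_1$ of $X$ on which $\sup_{x\in\overline{V}}\|DY(x)-DX(x)\|$ is small enough to guarantee that the perturbed operator
\begin{align*}
  J_0\cdot DY(x)+DY(x)^*\cdot J_0-\delta(x)J_0
\end{align*}
stays positive definite throughout $\overline{V}$ for every $Y\in\U_2$. The sufficiency direction of Proposition~\ref{pr:J-separated} applied with the unchanged pair $(\J_0,\delta)$ then yields strict $\J$-separation of $Y_t$ on $V$, and setting $\U=\U_2$ completes the argument.

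The only delicate point, and the real (minor) obstacle, is checking the $\J$-non-negativity hypothesis appearing in Proposition~\ref{pr:J-separated}: one must ensure that $\J(Y(x))\ge 0$ on $\overline{V}$ for every $Y\in\U$. Since $X$ is itself $\J$-non-negative on $U$ and strict $\J$-separation forces $\J(X(X_t(x)))$ to become strictly positive for all $t>0$ whenever $X(x)\neq\vec0$, the zero set of $\J(X(\cdot))$ at regular points of the trapping region cannot be robust, and a further shrinking of $\U_2$ enforces the required sign for $Y$.
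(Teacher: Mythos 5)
Your first three steps are exactly the paper's argument: invoke Lemma~\ref{le:trapping} for the persistent trapping region $V$, express strict $\J$-separation of $X$ through the positive definiteness of the symmetric operator $\tilde J_{0,x}-\delta(x)J_0$ on the compact set $\overline V$, and use the $C^1$-continuity of $Y\mapsto DY$ to keep this operator positive definite for all $Y$ in a smaller neighborhood, which gives strict separation of $Y$ on $V$. So the core of the proposal is correct and coincides with the paper's proof of Corollary~\ref{cor:persistence}.

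The last paragraph, however, is both unnecessary and unsound as written. It is unnecessary because the equivalence you actually need — positive definiteness of $\tilde J_{0,x}-\delta(x)J_0$ implies strict $\J$-separation — is Theorem~\ref{thm:J-separated-tildeJ}, items (2)--(3), which is stated for arbitrary linear multiplicative cocycles and carries \emph{no} $\J$-non-negativity hypothesis; the non-negativity appearing in Proposition~\ref{pr:J-separated} comes from the setting of the main theorems and plays no role in this characterization, and the corollary's conclusion asserts only strict $\J$-separation of $Y$, not non-negativity of $Y$. It is unsound because the corollary does not assume that $X$ is $\J$-non-negative (you start from ``Since $X$ is itself $\J$-non-negative on $U$''), and even if it did, non-negativity of the vector field is not a $C^1$-open condition: at a singularity $\sigma\in\Lambda$ one has $\J(X(\sigma))=0$, and arbitrarily $C^1$-close fields $Y$ can have $Y$ small but $\J$-negative at points near $\sigma$, so no shrinking of $\U_2$ ``enforces the required sign''. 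Deleting that paragraph and citing Theorem~\ref{thm:J-separated-tildeJ} in place of Proposition~\ref{pr:J-separated} leaves a complete proof, essentially identical to the one in the paper.
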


\begin{proof}
  The assumption implies that $\tilde \J_x= \tilde \J_x^X >
  \alpha(x)\J$ for all $x\in U$. Let $\U$ and $V$ be the
  neighborhoods of $X$ and $\Lambda$ given by
  Lemma~\ref{le:trapping}. Let also $Y\in\U$ be fixed.

  Writing $\tilde J_x^Y:= J\cdot DY(x)+DY(x)^*\cdot J$, we can make
  the norm $\|\tilde \J_x - \tilde \J_x^Y\|$ smaller than
  \begin{align*}
    \frac12\min\left\{ \inf_{v\in C_+(x)}
    \frac{\langle \tilde J_x v,v \rangle}{\langle J
      v,v\rangle}
    - \alpha(x) : x \in \overline{U_0}\right\}
  \end{align*}
  for all $x\in V$ by shrinking $\U$ if needed.  This
  ensures that there exists $\beta:\overline V\to\RR$ such
  that $\tilde \J_x^Y>\beta(x) \J$ for all $x\in \overline V$, so $Y$
  is strictly $\J$-separated on $V$.
\end{proof}

From Section~\ref{sec:j-separat-cocycl} we know that there
exists a continuous dominated splitting $F^Y_-(x)\oplus
F^Y_+(x)$ of $T_xM$ for $x\in\Lambda_Y(U)$, with respect to
$Y_t$ for all $Y\in\U$.

The strict $\J$-separation on $U$ for $X_t$ implies that
any invariant subbundle of $T_xM$ along an orbit of the flow
$X_t$ must be contained in $F_\pm(x)$. In particular, the
characteristic space corresponding to the flow direction is
contained in $F_+(x)$.

\begin{lemma}
  \label{le:flow-center}
  Let $\Lambda$ be a compact invariant set for a flow $X$ of
  a $C^1$ vector field $X$ on $M$.

  \begin{enumerate}
  \item Given a continuous splitting $T_\Lambda M =
    E\oplus F$ such that $E$ is uniformly contracted,
    then $X(x)\in F_x$ for all $x\in \Lambda$.
  \item Assuming that $\Lambda$ is
      non-trivial and has a continuous \emph{and dominated}
      splitting $T_\Lambda M = E\oplus F$ such that $X(x)\in
      F_x$ for all $x\in\Lambda$, then $E$ is a uniformly
      contracted subbundle.
  \end{enumerate}
\end{lemma}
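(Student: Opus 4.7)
The plan is to treat the two implications separately. For part (1), I would decompose $X(x)=X_E(x)+X_F(x)$ using the invariant splitting, where both components depend continuously on $x\in\Lambda$. The $DX_t$-invariance of each subbundle, combined with $DX_t(x)\cdot X(x)=X(X_t(x))$ and the one-dimensional invariance of the flow direction, forces $DX_{-t}(x)\cdot X_E(x)=X_E(X_{-t}(x))$. Uniform forward contraction of $E$ is equivalent to uniform backward expansion, giving $\|X_E(X_{-t}(x))\|\ge K^{-1}e^{\lambda t}\|X_E(x)\|$ for $t>0$. Continuity of $X_E$ on the compact set $\Lambda$ bounds $\|X_E\|$ above, which is incompatible with the exponential lower bound unless $X_E(x)=0$, so $X(x)\in F_x$ for every $x\in\Lambda$.

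For part (2) I would split into two cases. When $\Lambda$ has no singularities, $m:=\inf_{x\in\Lambda}\|X(x)\|>0$ by compactness and continuity. The specific unit vector $X(x)/\|X(x)\|\in F_x$ yields the upper co-norm bound $m(DX_t|_{F_x})\le \|X(X_t(x))\|/\|X(x)\|\le M/m$, where $M=\sup_{x\in\Lambda}\|X(x)\|$. Rewriting the domination inequality as $\|DX_t|_{E_x}\|\le K e^{-\lambda t}\cdot m(DX_t|_{F_x})$ (using $\|DX_{-t}|_{F_{X_t(x)}}\|=m(DX_t|_{F_x})^{-1}$) gives the uniform contraction $\|DX_t|_{E_x}\|\le K(M/m)e^{-\lambda t}$.

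When $\Lambda$ contains singularities, the same estimate at each regular $x\in\Lambda$ yields a pointwise constant $KM/\|X(x)\|$, hence pointwise decay $\|DX_t|_{E_x}\|\to 0$ as $t\to+\infty$ on the regular part. At each singularity $\sigma\in\Lambda$, I would then show that $E_\sigma$ sits inside the stable eigenspace $E^s_\sigma$ of the hyperbolic fixed point $\sigma$. Non-triviality (connectedness of $\Lambda$ together with the presence of a regular orbit and the fact that $\sigma$ cannot be an isolated component) forces $\sigma$ to be accumulated by regular points of $\Lambda$; along approaching trajectories the normalized direction of $X$ must tend to a vector tangent to the local stable manifold of $\sigma$. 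Since $X\in F$ on the regular part, continuity of the splitting places such a limit vector in $F_\sigma\cap E^s_\sigma\neq\{0\}$. Domination at the fixed point (applied to the $DX_t(\sigma)$-invariant decomposition $E_\sigma\oplus F_\sigma$) then forces every eigenvalue of $DX(\sigma)|_{E_\sigma}$ to have real part strictly less than the real part of some negative eigenvalue of $DX(\sigma)|_{F_\sigma}$, hence strictly negative, giving $E_\sigma\subset E^s_\sigma$ and the pointwise contraction $\|DX_t(\sigma)|_{E_\sigma}\|\to 0$. With pointwise decay at every $x\in\Lambda$ for the continuous invariant subbundle $E$, Lemma~\ref{le:unif-behav-lim0} upgrades this to uniform contraction.

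The hard part is the singular case of part (2): the direct estimate from regular points degenerates at singularities, and one must use the geometric fact that trajectories in $\Lambda$ approach $\sigma$ tangentially to its stable manifold, together with the spectral domination at the fixed point, to locate $E_\sigma$ inside $E^s_\sigma$ and recover pointwise decay at $\sigma$. All remaining steps (reduction to pointwise contraction via Lemma~\ref{le:unif-behav-lim0}, the regular case estimate) are routine.
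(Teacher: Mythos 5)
Your part (1) and the nonsingular half of part (2) are fine, and they are essentially the paper's own argument in lighter notation (the paper bounds the $E$-component of $X$ via the norm of the projection $\pi(E_{x_n})$ and the angle between the bundles; you invoke boundedness of the continuous component $X_E$ on the compact $\Lambda$ — same mechanism). The genuine gap is exactly at the step you flag as the hard one, in the singular case of part (2): the claim that regular points of $\Lambda$ accumulating $\sigma$ have normalized flow direction tending to a vector tangent to $W^s_{loc}(\sigma)$. Nothing in the hypotheses produces points of $\Lambda$ on $W^s(\sigma)\setminus\{\sigma\}$, and the regular points of $\Lambda$ near $\sigma$ typically lie on or near $W^u(\sigma)$: in the (geometric) Lorenz attractor $W^u(\sigma)\subset\Lambda$, and along $W^u_{loc}(\sigma)$ the normalized vector field converges to directions in $E^u_\sigma$, not $E^s_\sigma$; points of orbits merely passing near $\sigma$ give mixed limit directions as well. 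What continuity of the splitting does give is that such limit vectors lie in $F_\sigma$ — but not that they lie in $E^s_\sigma$, so $F_\sigma\cap E^s_\sigma\neq\{0\}$ is not established. Without it, domination at the fixed point only yields that the maximal real part of the spectrum of $DX(\sigma)|_{E_\sigma}$ is at most the minimal real part of the spectrum of $DX(\sigma)|_{F_\sigma}$ minus $\lambda$, which is not negative when $F_\sigma\subset E^u_\sigma$; hence $E_\sigma\subset E^s_\sigma$, and the pointwise decay at $\sigma$ that you feed into Lemma~\ref{le:unif-behav-lim0}, do not follow from what you wrote.

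For comparison, the paper's proof of item (2) does not use invariant manifolds or the spectrum of $\sigma$ at all: from domination and $X(x)\in F_x$ it extracts a contraction estimate for $DX_T|_{E_x}$ at regular points $x$ (using that $\|X\|$ is bounded on $\Lambda$), picks a sequence of regular points $x_n\to\sigma$ (non-triviality), and transfers the bound to $E_\sigma$ by continuity of $E$ and of $DX_T$. If you want to keep your spectral route, the fact that actually works in the setting where the lemma is applied (Theorem~\ref{mthm:Jseparated-parthyp}, where $\Lambda$ is an attracting set) is the opposite inclusion to the one you aimed for: $\Lambda\supset W^u(\sigma)$, the limits of $X(q)/\|X(q)\|$ for $q\in W^u_{loc}(\sigma)$, $q\to\sigma$, sweep out all directions of $E^u_\sigma$, so continuity of $F$ forces $E^u_\sigma\subset F_\sigma$; since $E_\sigma$ is $DX_t(\sigma)$-invariant and meets $F_\sigma$ only at $\vec0$, hyperbolicity of $\sigma$ gives $E_\sigma\subset E^s_\sigma$, and then your reduction via Lemma~\ref{le:unif-behav-lim0} closes the argument. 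As written, however, the stable-manifold claim is a gap, not a routine step.
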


\begin{proof} For the first item, we denote by
  $\pi(E_x):T_x M\to E_x$ the projection on $E_x$
  parallel to $F_x$ at $T_x M$, and likewise
  $\pi(F_x):T_x M\to F_x$ is the projection on $F_x$
  parallel to $E_x$. We note that for $x\in\Lambda$
  \begin{align*}
    X(x)=\pi(E_x)\cdot X(x) + \pi(F_x)\cdot X(x)
  \end{align*}
  and for $t\in\R$, by linearity of $DX_t$ and
  $DX_t$-invariance of the splitting $ E\oplus F$
  \begin{align*}
    DX_{t}\cdot X(x)
    &=
    DX_{t}\cdot\pi(E_x)\cdot X(x)
    +
    DX_{t}\cdot\pi(F_x)\cdot X(x)
    \\
    &=
    \pi(E_{X_{t}(x)})\cdot DX_{t} \cdot X(x)
    + \pi(F_{X_{t}(x)}) \cdot DX_{t} \cdot X(x)
  \end{align*}
  Let $z$ be a limit point of the negative orbit of $x$,
  that is, we assume that there is a strictly increasing
  sequence $t_n\to+\infty$ such that
  $\lim\limits_{n\to+\infty}x_{n}:=\lim\limits_{n\to+\infty}X_{-t_n}(x)
  = z$. Then $z\in\Lambda$ and, \emph{if} $\pi(E_x)\cdot
  X(x)\neq\vec0$ we get
  \begin{align}
    \lim\limits_{n\to+\infty} DX_{-t_n}\cdot X(x)
    &=
    \lim\limits_{n\to+\infty} X(x_n) = X(z) \quad\text{but
      also}
    \nonumber
    \\
    \|DX_{-t_n}\cdot\pi(E_x)\cdot X(x)\|
    &\ge
    c e^{\lambda t_n}\|\pi(E_x)\cdot
    X(x)\|\xrightarrow[n\to+\infty]{}+\infty.
    \label{eq:expand}
  \end{align}
  This is possible only if the angle between
  $E_{x_n}$ and $F_{x_n}$ tends to zero when
  $n\to+\infty$.

  Indeed, using the Riemannian metric on $T_y M$, the angle
  $\alpha(y)=\alpha(E_y,F_y)$ between $E_y$ and $F_y$ is
  related to the norm of $\pi(E_y)$ as follows:
  $\|\pi(E_y)\|=1/\sin(\alpha(y))$. Therefore
  \begin{align*}
    \|DX_{-t_n}\cdot\pi(E_x)\cdot X(x)\|
    &=
    \|\pi(E_{x_n})\cdot DX_{-t_n} \cdot X(x) \|
    \\
    &\le
    \frac1{\sin(\alpha(x_n))}
    \cdot
    \|DX_{-t_n} \cdot X(x) \|
    \\
    &=
    \frac1{\sin(\alpha(x_n))}
    \cdot \|X(x_n)\|
  \end{align*}
  for all $n\ge1$. Hence, if the sequence \eqref{eq:expand}
  is unbounded, then
  $\lim\limits_{n\to+\infty} \alpha(X_{-t_n}(x)) = 0$.

  However, since the splitting $E\oplus F$ is continuous
  over the compact $\Lambda$, the angle $\alpha(y)$ is a
  continuous and positive function of $y\in\Lambda$, and
  thus must have a positive minimum in $\Lambda$. This
  contradiction shows that $\pi(E_x)\cdot X(x)$ is always
  the zero vector and so $X(x)\in F_x$ for all
  $x\in\Lambda$.

  This completes the proof of the first item.

    For the second item, fix $x\in\Lambda$
    with $X(x)\neq\vec0$, take $v\in E_x$ and use the
    definition of $(K,\lambda)$-domination to obtain for
    each $t>0$
  \begin{align*}
    K e^{-\lambda t}\ge \frac{\|DX_t\cdot v\|}{\|DX_t\cdot X(x)\|}
    =
    \frac{\|DX_t\cdot v\|}{\|X(X_t(x))\|}
    \ge
    \frac1C\|DX_t\cdot v\|
  \end{align*}
  where $C=\sup\{\|X(y):y\in\Lambda\}$ is a positive
  number. For $\sigma\in\Lambda$ such that
  $X(\sigma)=\vec0$, we fix $T>0$ such that $CK e^{-\lambda
    T}<1/2$ and, since $\Lambda$ is non-trivial, we can find
  a sequence $x_n\to\sigma$ of regular points of
  $\Lambda$. The continuity of the derivative cocycle
  ensures $1/2\ge\|DX_T\mid E_\sigma\|=\lim_{n\to+\infty}
  \|DX_T\mid E_{x_n}\|$ and so $E_\sigma$ is also a
  uniformly contracted subspace.
\end{proof}

Since $X$ and $\Lambda=\Lambda_X$ are in the conditions of the
second item of the previous lemma, we have proved partial
hyperbolicity for $T_{\Lambda}M=F_-\oplus F_+$.

At this point, we have concluded the proof of sufficiency in
the statement Theorem~\ref{mthm:Jseparated-parthyp}.

The necessary part of the statement of
Theorem~\ref{mthm:Jseparated-parthyp} is a simple
consequence of Theorem~\ref{thm:necessity} applied to the
cocycle $DX_t$ acting on the vector bundle $T_UM$, the
tangent bundle on the trapping region $U$.

This completes the proof of
Theorem~\ref{mthm:Jseparated-parthyp}.

\subsection{Uniform Hyperbolicity}
\label{sec:uniform-hyperb}

By using Theorem \ref{mthm:Jseparated-parthyp}, we are able
to present the proof of the Corollaries
\ref{mcor:J-separation-hyperbolicity} and
\ref{mcor:incompress-anosov}.

\begin{proof}[Proof of Corollary \ref{mcor:J-separation-hyperbolicity}]
  Let $X \in \Mundo$ and $\Lambda$ be the maximal invariant
  set of a trapping region $U$ for $X$.  Consider $\J, \G$
  two differentiable fields of non-degenerated quadratic
  forms on $U$ with constant indices $s$ and $n - s- 1$,
  respectively, where $n = \dim M$ and $s < n - 2$.  Since
  $\Lambda \cap \sing(X) = \emptyset$, the flow direction
  $X(x)$ is non-zero for all point $x \in \Lambda$ and
  generates an invariant line bundle $E^X$ over
  $\Lambda$. 

  On the one hand, by Remark \ref{rmk:strictly-J-separated},
  $- X$ is a non-negative strictly separated with respect to
  $- \G$ on $\Lambda$. Then Theorem
  \ref{mthm:Jseparated-parthyp} implies that there is a
  partially hyperbolic splitting $T_\Lambda M=E^{cs} \oplus
  E^{u}$ with the subbundle $E^u$ uniformly expanding, and
  dimensions $\dim E^{cs}=s+1$ and $\dim E^u=n-s-1$. Thus,
  by Lemma \ref{le:flow-center}, the flow direction $E^X$ is
  contained in $E^{cs}$.

  On the other hand, with analogous arguments, we prove
  that, for $\J$, $\Lambda$ has a partially hyperbolic
  splitting $T_\Lambda M=E^s \oplus E^{cu}$, with $E^s$
  uniformly contracting, and so $E^X \subset E^{cu}$, with
  dimensions $\dim E^s=s$ and $\dim E^{cu}=n-s$.

  Moreover we clearly have $E^s\cap E^{cu}=\{0\}$ and
  $E^u\cap E^{cs}=\{0\}$.  Hence we have the following
  dominated splittings
$$
(E^{s}\oplus E^{X}) \oplus E^{u} = E^{s}\oplus (E^{X} \oplus
E^{u}) = T_{\Lambda}M,
$$
with $\dim E^{cs} = \dim (E^{s}\oplus E^X)$ and $\dim E^{cu}
= \dim (E^{s}\oplus E^X)$. By uniqueness of dominated
splittings with the same dimensions we obtain
$E^{cu}=E^X\oplus E^u$ and $E^{cs}=E^s\oplus E^X$, and the
splitting of $T_\Lambda M$ above is a hyperbolic splitting.
\end{proof}

\begin{proof}[Proof of Corollary \ref{mcor:incompress-anosov}]
  Consider $M$ a closed Riemannian manifold with dimension
  $n \geq 3$ and $X \in \Mundo$ a incompressible vector
  field.

  Let $\J$ be a field of non-degenerate quadratic forms on
  $M$, with constant index $\indi (\J)=\dim (M)-2$, such
  that $X_t$ is a non-negative $\J$-separated flow.

  By Theorem \ref{mthm:Jseparated-parthyp} and the
  hyphotesis on $\indi (\J)$, we obtain a partially
  hyperbolic splitting $TM = E \oplus F$, with $\dim E
  =\dim(M) - 2$ and $\dim F=2$ and $E$ uniformly
  contracted. Hence, as the flow is volume-preserving, the
  area along the two-dimensional direction $F$ is
  expanded. Indeed, the angle $\theta(E_x,F_x)$ between the
  subbundles is uniformly bounded away from zero (by
  domination of the splitting; see~\cite{Newhouse2004}) and
  so
  \begin{align*}
    1=|\det DX_t(x)| = |\det DX_t\mid_{E_x}|\cdot|\det
    DX_t\mid_{F_x}|\cdot \sin \theta(E_{X_t(x)} ,F_{X_t(x)})
   \end{align*}
  which for $t<0$ ensures that
  \begin{align*}
    |\det DX_t\mid_{F_x}|\le \sin\theta_0
    \cdot|\det DX_t\mid_{E_x}|^{-1}\xrightarrow[t\to-\infty]{}0.
  \end{align*}
  Thus, $M$ is a singular-hyperbolic set for $X$. Moreover,
  there can be no singularities, since they cannot be in the
  interior of a singular-hyperbolic set; see Doering
  \cite{Do87} and Morales-Pacifico-Pujals \cite{MPP99} in
  dimension three; Li-Gan-Wen~\cite{LGW05} and Vivier
  \cite{Viv03} for higher dimensions; or \cite[Chapter
  4]{AraPac2010}. Hence $M$ is a singular-hyperbolic set for
  $X_t$ without singularities. Therefore $X$ is an Anosov
  flow.
\end{proof}

\section{Sectional hyperbolicity - proof of Theorem
  \ref{mthm:2-sec-exp-J-monot}}
\label{sec:approach-via-linear}

Here we prove Theorem~\ref{mthm:2-sec-exp-J-monot}.  We
assume that $X$ is a $C^1$ vector field in an open trapping
region $U$ with a smooth field of non-degenerate
quadratic forms $\J$ such that $X$ is non-negative and strictly
$\J$-separated, and the linear Poincar\'e flow on any
compact invariant subset $\Gamma$ of
$\Lambda^*_X(U):=\Lambda_X(U)\setminus \sing(X)$ is strictly
$\J_0$-monotone, for some field of quadratic forms $\J_0$
equivalent to $\J$.

We show that, in this setting, the linear Poincar\'e flow of
$X$ on $\Gamma$ has a hyperbolic splitting. After that, as a
consequence, we show that either there are no singularities
in $\Lambda$ and then $\Lambda$ is a hyperbolic attracting
set; or, otherwise, $\Lambda$ is a sectional hyperbolic
attracting set, as long as the singularities are sectional
hyperbolic with index compatible with the index of the
attracting set.

\subsection{Strict $\J$-monotonicity for the linear Poincar\'e
  flow and hyperbolicity}
\label{sec:j-monoton-linear-1}

Strict $\J$-monotonicity is clearly stronger than strict
$\J$-separation, so that on a compact invariant subset
$\Gamma$ of $\Lambda^*_X(U)$ the linear Poincar\'e flow
$P^t$ admits a dominated splitting $N^s\oplus N^u$ of $N$
over $\Gamma$. But with strict $\J$-monotonicity we can say
more.

Consider $X\in\Mundo$ with a trapping region $U$,
$\Lambda_X(U)$ its attracting set and a smooth field
of non-degenerate quadratic forms $\J$ on $U$.
\begin{proposition}
  \label{pr:J-monot-hyperb}
  If $X_t$ is non-negative strictly $\J$-separated on
  $\Lambda_X(U)$ and the associated linear Poincar\'e flow
  $P^t$ over any compact invariant subset $\Gamma$ of
  $\Lambda_X(U)^*$ is strictly $\J_0$-monotone for some
  field of quadratic forms $\J_0$ on $T_\Gamma M$
  equivalent to $\J$, then $\Gamma$ is a hyperbolic set for
  $P^t$.
\end{proposition}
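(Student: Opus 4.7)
The plan is to apply Theorem~\ref{thm:char-dom-split}(3) directly to the linear Poincar\'e flow $P^t$, viewed as a linear multiplicative cocycle over $X_t\mid_\Gamma$ on the normal bundle $N_\Gamma$, with the restriction of $\J_0$ as the field of non-degenerate quadratic forms. Since $\Gamma$ avoids $\sing(X)$, the vector field $X$ is bounded away from zero on the compact set $\Gamma$, so the orthogonal (w.r.t.\ $\J$) projection $\Pi_x$ onto $N_x$ depends smoothly on $x\in\Gamma$, and hence $P^t$ is a genuine smooth linear multiplicative cocycle on $N_\Gamma$ with infinitesimal generator $\Pi\cdot DX$. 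All the abstract results of Section~\ref{sec:propert-j-separat} therefore apply to $P^t$ verbatim.

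First I would observe that strict $\J_0$-monotonicity of $P^t$ automatically implies strict $\J_0$-separation. Indeed, by definition, $\partial_t\J_0(P^t v)\mid_{t=0}>0$ for every nonzero $v\in N_x$ and every $x\in\Gamma$; applying this at every base point $X_s(x)$ of the orbit and every vector $P^s v$ shows that $t\mapsto\J_0(P^t v)$ is strictly increasing. Hence for every nonzero $v\in C_+^{\J_0}(x)\cup C_0^{\J_0}(x)$ we get $\J_0(P^t v)>\J_0(v)\ge 0$ for all $t>0$, which is the strict $\J_0$-separation of $P^t$. By Theorem~\ref{thm:strict-J-separated-cocycle} applied to the cocycle $P^t$ on the bundle $N_\Gamma$, this yields a dominated splitting $N_\Gamma=N^s\oplus N^u$ for $P^t$, with $\dim N^s$ equal to the index of $\J_0\mid_N$.

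Finally, I would invoke Theorem~\ref{thm:char-dom-split}(3): strict $\J_0$-monotonicity is exactly the pointwise positivity $\J_0'(v)>0$ for every nonzero $v\in N_x$, $x\in\Gamma$, appearing in the hypothesis of that item. Since $\J_0$ is trivially compatible with itself, that theorem forces the dominated splitting $N^s\oplus N^u$ to be uniformly hyperbolic, so $\Gamma$ is hyperbolic for the linear Poincar\'e flow. The only technical point to verify carefully is that $\J_0\mid_N$ is a non-degenerate quadratic form on $N$ of constant index (so that the cones $C_\pm^{\J_0}$ on $N$ behave as required); this follows because $X$ is $\J$-non-negative and $\J_0\sim\J$, so $N_x$ is $\J_0$-non-degenerate by the splitting $T_xM=E^X_x\oplus N_x$ of Proposition~\ref{pr:propbilinear}(1), and the index of $\J_0\mid_{N_x}$ coincides with that of $\J_0$. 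I expect this compatibility bookkeeping, rather than any dynamical estimate, to be the main routine obstacle.
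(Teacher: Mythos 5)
Your proposal is correct, and it reaches the conclusion by a slightly different route than the paper. You and the paper agree on the first half: strict $\J_0$-monotonicity gives strict $\J_0$-separation of $P^t$ on $N_\Gamma$, hence (by the cone machinery of Theorem~\ref{thm:strict-J-separated-cocycle}, which the paper also invokes implicitly at the start of Section~\ref{sec:j-monoton-linear-1}) a dominated splitting $N^s\oplus N^u$. For the upgrade from domination to hyperbolicity, the paper argues directly on $N$: it extracts $\alpha_1(x)>0$ and $\alpha_2>0$ from positivity of $\partial_t\J_0(P^tv)\mid_{t=0}$ and compactness, integrates to get $\log\big(\J_0(P^tv)/\J_0(v)\big)\gtrless\pm H(x,t)$, proves $H(x,t)\to+\infty$ via Lemma~\ref{le:A-unbound} (whose case analysis is designed to control excursions near the singularities of $\Lambda$, even though for your compact $\Gamma\subset\Lambda^*_X(U)$ it reduces to a uniform lower bound), converts $\J_0$-growth into norm growth by Lemma~\ref{le:ratio}, and finishes with a Ma\~n\'e-type pointwise-to-uniform criterion \cite{Man82}. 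You instead outsource exactly this estimate to the reciprocal direction of Theorem~\ref{thm:char-dom-split}(3), whose proof is the same uniform-$\kappa$ argument; this is legitimate, since that direction only uses positivity of $\J_0'$ over the compact base (the ``compatible field on a neighborhood'' clause plays no role there, and taking $\J_0$ itself as the compatible field is fine), and it buys you a shorter, more modular proof at the cost of hiding where compactness of $\Gamma$ and its disjointness from $\sing(X)$ enter. Two small points to keep honest: the non-degeneracy and constant index of $\J_0\mid_{N_x}$ is indeed the bookkeeping to check, and it rests on $\J(X(x))>0$ at regular points of an invariant set (which follows from non-negativity plus strict $\J$-separation along the full orbit), exactly the gloss the paper makes in Section~\ref{sec:j-monotonous-linear} via Proposition~\ref{pr:propbilinear}(1); and the infinitesimal generator of $P^t$ is not literally $\Pi\cdot DX$ (differentiating the projection produces extra terms), but those terms are $\J_0$-orthogonal to $N_x$ at $t=0$, so the quantity you actually use, $\partial_t\J_0(P^tv)\mid_{t=0}$, is unaffected --- compare the computation of $\hat J_x$ in Lemma~\ref{le:P-J-monotonous}.
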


\begin{proof}
  The property $\partial_t\J_0(P^tv)\mid_{t=0}>0$ for all
  $v\in N_x$, $x\in\Gamma$ is equivalent to say that the
  quadratic form $\partial_t \J_{0,x}\vert_{N_x}$ is positive
  definite for all $x\in\Gamma$.  This implies the existence
  of a function $\alpha_1:U\to(0,+\infty)$ such that
  \begin{align*}
    \partial_t \J_0(P^tv)\mid_{t=0} > \alpha_1(x) \cdot
    \|v\|^2>0,\quad x\in \Gamma, v\in N_x, v\neq\vec0.
  \end{align*}
  Since $\Gamma$ is compact, the smoothness of $\J_0$ ensures
  the existence of $\alpha_2>0$ such that
  $|\J_0(v)|\le\alpha_2\|v\|^2$ for all $v\in N_x$,
  $x\in\Gamma$. Hence we obtain
  \begin{align*}
    \partial_t \J_0(P^tv)\mid_{t=0} \ge \alpha_1(x) \cdot
    \|v\|^2 \ge \frac{\alpha_1(x)}{\alpha_2} |\J_0(v)|
  \end{align*}
  where $\alpha_1(x)>0$ for all $x\in\Gamma$.
  Therefore we have
  \begin{align*}
    \log\frac{\J_0(P^tv)}{\J_0(v)} &\ge \int_0^t
    \frac{\alpha_1(X^s(x))}{\alpha_2}\,ds =:H(x,t) \quad
    \text{for $\J_0$-positive vectors $v$}; \qand
    \\
    \log\frac{\J_0(P^tv)}{\J_0(v)} &\le -\int_0^t
    \frac{\alpha_1(X^s(x))}{\alpha_2}\,ds = -H(x,t) \quad
    \text{for $\J_0$-negative vectors $v$}.
  \end{align*}
  From Lemma~\ref{le:ratio} we can compare $|\J_0|$ with the
  square of the Riemannian norm, so all that is left to do
  is to prove that $H(x,t)$ is unbounded for $t>0$ and each
  $x\in\Gamma$.
  \begin{lemma}
    \label{le:A-unbound}
    For every point $x$ in a compact invariant subset
    $\Gamma\subset\Lambda_X(U)^*$, we have
    $\lim\limits_{t\to+\infty}H(x,t)=+\infty$.
  \end{lemma}

\begin{proof}[Proof of Lemma~\ref{le:A-unbound}]
  If there are no singularities in $\Lambda$, then
  $\Lambda_X(U)^*=\Lambda_X(U)$ is compact and so there
  exists $\alpha_0>0$ such that $\alpha_1(x)\ge\alpha_0$ for
  all $x\in\Lambda$. This clearly implies the statement of
  the lemma in this case.

  Let $S=\Lambda\cap S(X)$ be the set of finitely many
  singularities of $X$ in $\Lambda$; we recall that we are
  assuming that $S$ is formed by hyperbolic fixed points of
  $X_t$.  We fix $\epsilon>0$ such that
  $\{B(\sigma,\epsilon)\}_{\sigma\in S}$ is pairwise
  disjoint sets and $\Lambda\setminus
  B(S,\epsilon)\neq\emptyset$, where
  $B(S,\epsilon)=\cup_{\sigma\in S} B(\sigma,\epsilon)$.

  We have that $K:=\Lambda\setminus B(S,\epsilon)$ is
  compact and so there exists $\alpha_0>0$ such that
  $\alpha_1(x)\ge\alpha_0$ for all $x\in K$. Moreover, since
  the norm of the vector field $X$ is bounded from above in
  $\Lambda$, there exists a minimum time $T>0$ between
  consecutive visits of any orbit of $x\in\Lambda\setminus
  W^s(S)$ to $B(S,\epsilon)$. That is, for
  $x\in\Lambda\setminus W^s(S)$, if we define sequences
  $t_1<s_1<t_2<s_2< \dots$ such that
  $X_{(t_i,s_i)}(x)\subset B(S,\epsilon)$ and
  $X_{[s_i,t_{i+1}]}(x)\subset K$, then $t_{i+1}-s_i>T$,
  $i\ge1$.

  Since $x\in\Gamma$ and $\Gamma\cap S=\emptyset$, we have
  $\omega_X(x)\cap B(S,\epsilon)=\emptyset$ for some small
  $\epsilon>0$ dependent on $\Gamma$ only, in which case the
  sequence above terminates at some $s_l$ for $l\ge1$.

  Hence we can write, for all $t\ge s_{l}$ and $t<t_{l+1}$
  (if $t_{l+1}$ does not exist, the last conditions is
  vacuous)
  \begin{align*}
    H(x,t)\ge \alpha_0 t_1+ \alpha_0\sum_{i=1}^{l-1}
    (t_{i+1}-s_i) + (t-s_l)\alpha_0 \ge \alpha_0 [(l-1) T +
    t_1 + (t-s_l)].
  \end{align*}
  Either the sequences are infinite, or $t_{l+1}$ does not
  exist. Hence $H(x,t)$ grows without bound when
  $t\to+\infty$.
\end{proof}

Restricting $x$ to a compact invariant subset $\Gamma$ of
$\Lambda_X(U)^*$, we obtain
\begin{align*}
  \lim\limits_{t\to-\infty}\|P^t\vert_{N^s_x}\|=+\infty
  \qand
  \lim\limits_{t\to+\infty}\|P^t\vert_{N^u_x}\|=+\infty,
  \quad x\in\Gamma.
\end{align*}
By well-known results, this ensures that $P^t$ is
hyperbolic over $\Gamma$; see e.g. \cite{Man82}. This
concludes the proof.
\end{proof}

\subsection{Sectional hyperbolicity from the linear Poincar\'e flow}
\label{sec:section-hyperb-from}

Here we prove Theorem~\ref{mthm:2-sec-exp-J-monot} through the
following results.

Let $\Lambda=\Lambda_X(U)$ be an attracting set contained in
the non-wandering set $\Omega(X)$ for a $C^1$ flow given by
a vector field $X$. We recall that
$\Lambda^*_X(U)=\Lambda_X(U)\setminus \sing(X)$.

\begin{theorem}\label{thm:2secexp-LPFhyp}
  The set $\Lambda$ is sectional-hyperbolic for $X$ if, and
  only if, there is a neighborhood $\U$ of $X$ in $\Mundo$
  such that any compact subset $\Gamma$ of $\Lambda^*_Y(U)$
  is hyperbolic of index $\indi(\Lambda)$ for the linear
  Poincar\'e flow associated to any $Y\in\U$ and each
  singularity $\sigma$ of $Y$ in the trapping region $U$ is
  sectionally hyperbolic with index $\indi(\sigma) \geq
  \indi(\Lambda)$.
\end{theorem}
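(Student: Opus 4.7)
The statement is an equivalence; both directions rest on passing between the tangent cocycle $DX_t$ and the Linear Poincar\'e Flow $P^t$.

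For necessity, suppose $\Lambda$ is sectional-hyperbolic with splitting $T_\Lambda M=E^s\oplus E^{cu}$. Lemma~\ref{le:flow-center} gives $X(x)\in E^{cu}_x$ at every regular $x\in\Lambda$, so the projection $\Pi_x$ onto $N_x$ parallel to $X(x)$ descends the splitting to $N=N^s\oplus N^u$ with $N^s_x=E^s_x$ and $N^u_x=\Pi_x(E^{cu}_x)$. This splitting is $P^t$-invariant; $N^s$ inherits uniform contraction from $E^s$, and $N^u$ is uniformly expanded as a standard consequence of sectional expansion of $E^{cu}$ together with boundedness of $\|X\|$ on $\Lambda$. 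Each singularity $\sigma\in\Lambda$ carries the splitting $T_\sigma M=E^s_\sigma\oplus E^{cu}_\sigma$ inherited from $\Lambda$, whence $\sigma$ is sectionally hyperbolic with $\indi(\sigma)=\dim E^s_\sigma\ge\indi(\Lambda)$. Persistence of sectional hyperbolicity under $C^1$ perturbation is classical, so the same conclusions hold on a $C^1$ neighborhood $\U$ of $X$.

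For sufficiency, I would build a partially hyperbolic splitting on $\Lambda=\Lambda_X(U)$ with sectionally expanding central bundle. On any compact invariant $\Gamma\subset\Lambda^*_X(U)$, lift the hyperbolic LPF splitting $N_\Gamma=N^s\oplus N^u$ to $T_\Gamma M$ by $\tilde E^s_x:=N^s_x$ and $\tilde E^{cu}_x:=N^u_x\oplus\langle X(x)\rangle$; both are $DX_t$-invariant and of constant dimension. Extend continuously across singularities by Grassmannian compactness: along any $x_n\to\sigma\in\Lambda\setminus\Lambda^*$, extract limits $E^s_\infty$ and $E^{cu}_\infty$ of the right dimensions. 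These limits are $DX_t(\sigma)$-invariant and form a dominated splitting, and since $\sigma$ is sectionally hyperbolic with $\indi(\sigma)\ge\indi(\Lambda)$, uniqueness of dominated splittings of a prescribed index forces $E^s_\infty=E^s_\sigma$ and $E^{cu}_\infty=E^c_\sigma$ independently of the chosen sequence, producing a continuous $DX_t$-invariant splitting $T_\Lambda M=E^s\oplus E^{cu}$. Uniform contraction of $E^s$ follows by combining hyperbolicity of $N^s$ on $\Lambda^*$ with uniform contraction on $E^s_\sigma$, glued by compactness. Sectional expansion of $E^{cu}$ splits into cases on a 2-plane $L\subset E^{cu}_x$: at singularities it is given; at regular $x$, either $X(x)\in L$ and $L=\langle X(x),v\rangle$ with $v\in N^u_x$, so $|\det(DX_t|_L)|$ is comparable to $\bigl(\|X(X_t(x))\|/\|X(x)\|\bigr)\cdot\|P^t v\|$ with angles bounded by domination; or $X(x)\notin L$, in which case $\Pi_x(L)$ is a 2-plane of $N^u_x$ whose $P^t$-area grows exponentially by hyperbolicity of $N^u$.

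The principal obstacle is the extension across singularities, since the projection defining $P^t$ degenerates as $\|X\|\to 0$. The key is that the index hypothesis $\indi(\sigma)\ge\indi(\Lambda)$ combined with the sectional-hyperbolic splitting of $T_\sigma M$ forces any Grassmannian accumulation subspace of $\{\tilde E^s_{x_n}\}$ to lie in the stable space at $\sigma$ by dimension, and then to coincide with it by uniqueness of dominated splittings. A secondary technical point is uniformity of the sectional expansion of $E^{cu}$ near singularities: when $\|X(x)\|$ is small, the orbit of $x$ spends long times near $\sigma$ where area expansion along $E^c_\sigma$ is uniform, which combined with hyperbolicity of $P^t$ away from $\sigma$ yields the required uniform rate. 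With these settled, the conclusion that $\Lambda$ satisfies Definition~\ref{sechypset} follows.
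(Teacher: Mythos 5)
Your necessity half is essentially the paper's own argument (project the sectional-hyperbolic splitting onto the normal bundle, get contraction of $N^s$ from $E^s$ and expansion of $N^u$ from sectional expansion of the plane $\gen\{X(x),v\}$, and invoke robustness), modulo the small inaccuracy that $N^s_x=\Pi_x(E^s_x)$ need not equal $E^s_x$. The sufficiency half, however, has a genuine gap. Your construction defines the splitting $\tilde E^s\oplus\tilde E^{cu}$ only on compact invariant subsets $\Gamma$ of $\Lambda^*_X(U)$, and then tries to extend it by Grassmannian limits at the singularities. But the hypothesis gives hyperbolicity of the linear Poincar\'e flow \emph{only} on such compact invariant $\Gamma$, and a regular point whose orbit accumulates a singularity (for instance any point of $W^s(\sigma)\cap\Lambda$, which is dense in the Lorenz attractor) lies in \emph{no} compact invariant subset of $\Lambda^*_X(U)$: any such subset would contain its $\omega$- or $\alpha$-limit set and hence a singularity. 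So on exactly the problematic orbits you have no splitting at all to pass to the limit, and taking limits ``at $\sigma$'' does not repair this, since the missing points are regular. This is also why your argument never uses two hypotheses that are essential in the statement: the robustness over a $C^1$ neighborhood $\U$ and the inclusion $\Lambda\subset\Omega(X)$. The paper's proof needs both: it applies a closing lemma to approximate every regular point of $\Lambda$ by periodic orbits of nearby vector fields $Y\in\U$ (periodic orbits being compact invariant subsets of $\Lambda^*_Y(U)$, where the hypothesis applies), obtains uniform hyperbolicity, uniform index and uniform angle bounds for the splittings along these periodic orbits, and only then takes limits to define a dominated splitting at every point of $\Lambda$, invoking \cite[Sections 5.4.1--5.4.3]{AraPac2010} to extend it over the singularities and to get sectional expansion of the central bundle.

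A second, related flaw is the claim that ``uniqueness of dominated splittings of a prescribed index forces $E^s_\infty=E^s_\sigma$ and $E^{cu}_\infty=E^c_\sigma$.'' The hypothesis only gives $\indi(\sigma)\ge\indi(\Lambda)$, and in the typical Lorenz-like case $\indi(\sigma)=\indi(\Lambda)+1$, so the stable space of $\sigma$ has dimension strictly larger than $\dim E^s_\infty$ and the limit splitting cannot coincide with the hyperbolic splitting at $\sigma$; the limit central direction must absorb a weak stable eigendirection of $DX(\sigma)$. Making the splitting along regular orbits compatible with the sectional-hyperbolic splitting at the singularities is precisely the delicate index-compatibility argument the paper delegates to \cite[Section 5.4.2]{AraPac2010}; it does not follow from uniqueness of dominated splittings alone. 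Likewise, the uniform contraction of $E^s$ and the uniform sectional expansion of $E^{cu}$ near the singularities (where $\|X\|\to0$ and the Poincar\'e projection degenerates) require quantitative control that your ``glue by compactness'' step does not supply.
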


\begin{proof} Let us assume that
  $\Lambda=\Lambda_X(U)\subset\Omega(X)$ is sectionally
  hyperbolic for $X$.  If $E^s\oplus E^c$ is a
  partially hyperbolic splitting of $TM$ over
  $\Lambda$, then the projections $N^s:=\Pi\cdot E^s$
  and $N^u:=\Pi\cdot E^c$ are $P^{\,t}$-invariant, and
  $N^s$ is uniformly contracted by $P^{\,t}$. Indeed,
  since the orthogonal projection does not increase
  norms, for $v\in E^s_x$ we get $\|P^t
  v\|=\|\Pi_{X_t(x)} DX_t(x)\cdot v\|\le\|DX_t(x)\cdot
  v\|$, which is uniformly contracted for $t>0$, as
  long as $X(x)\neq\vec0$.

  Moreover, the above property persists for all vector
  fields $Y$ in a small enough $C^1$ neighborhood of $X$, by
  the normal hyperbolic theory; see \cite{HPS77}.

  Now we assume, additionally, that $E^c$ is sectionally
  expanding on $\Lambda$ for $X$. This ensures that the
  continuation $E^{s,Y}\oplus E^{c,Y}$ of the partially
  hyperbolic splitting for $C^1$ close vector fields is also
  sectional hyperbolic. For otherwise, according to
  Remark~\ref{rmk:sec-exp-discrete}, for any given fixed
  $T>0$ we would obtain a sequence $Y_n$ of vector fields
  converging to $X$ in the $C^1$ topology, a sequence $x_n$
  of points in $\Lambda_{Y^n}(U)^*$ and a sequence $F_{x_n}$
  of $2$-subspaces of $E^{c,Y}_{x_n}$ such that
  $|\det(DY^n_T\vert_{F_{x_n}})|\le2$, $n\ge1$. Then for a
  limit point $x$ of $(x_n)_n$ in $\Lambda_X(U)$ and a limit
  $2$-subspace $F_x$ of the sequence $F_{x_n}$ inside
  $E^{c,Y}_x$ (using the compactness of the Grassmannian
  over the compact set $\overline{U}$ together with the
  continuity of the splitting with respect to $Y_n$), we get
  $|\det(DX_T\vert_{F_x})|\le 2$. Since $T>0$ was
  arbitrarily chosen, this contradicts the assumption of
  sectional-expansion on $\Lambda$.

  Hence we may argue with any fixed $Y$ close enough to $X$
  in the $C^1$ topology.  Let us take $\Gamma$ a compact
  subset of $\Lambda^*_Y$. For $x\in\Gamma$ the uniform
  expansion along $N^u$ is obtained as follows.

  Let $v$ be a unit vector on $N^u_x$ and let $F_x$ be the
  subspace spanned by $v$ and $X(x)$.  For
  some $K>0$ we have $K^{-1}\le\|Y(x)\|\le K$ for all $x\in\Gamma$
  by compactness. Let us fix $t>0$ and consider the basis
  $\{\frac{T(x)}{\|T(x)\|},\, v\}$ of $F_x$.  We note that
  $DY_t(F_x)$ is a bidimensional subspace $F^t_x$ of
  $E^c_{Y_t(x)}$, where we take the basis
  $\{\frac{Y(Y_t(x))}{\|Y(Y_t(x)x)\|},\, w_t\}$, with
  \begin{align*}
    w_t:=\frac{\Pi_{Y_t(x)}\cdot DY_t(x)(
      v)}{\|\Pi_{Y_t(x)}\cdot DY^t(x)(v)\|}
    \quad\text{belonging to}\quad N^u_{Y_t(x)}.
  \end{align*}
  With respect to these orthonormal bases we have
  \begin{align*}
    DY_t\vert_{F_x} = \left[\begin{array}{cc}
        \frac{\|Y(Y_t(x))\|}{\|Y(x)\|} & \star  \\
        0 & \Delta
      \end{array} \right],
  \end{align*}
  because the flow direction is invariant.
  Hence
  \begin{align*}
    \det\big(DY_{t}\vert_{E^{c}_{x}} \big) =
    \frac{\|Y(Y_t(x))\|}{\|Y(x)\|} \Delta
    \le K^2 \Delta
  \end{align*}
  for some $K>0$ depending only on
  $\Gamma\subset\Lambda_Y(U)^*$, and
  \begin{align*}
    \|P^{\,Y,t}_x\cdot v\| & = \| \Pi_{X_t(x)}\cdot
    DY_{t}(x)(v) \| = \|\Delta \cdot w \| = |\Delta|
    \\
    & \ge K^{-2} |\det(DY_{t}\vert_{F_x}) | \ge K^{-2} C
    e^{\lambda t}.
  \end{align*}
  This proves that $N^u$ is uniformly expanded by the linear
  Poincar\'e flow $P^{Y,t}$ over $\Gamma$. Moreover, for
  every singularity $\sigma\in\Lambda$ we have $T_\sigma
  M=E^s_\sigma\oplus E^u_\sigma$ a sectional hyperbolic
  splitting, thus $\indi(\sigma)\ge\indi(\Lambda)$; in fact,
  sectional expansion on $E^c_\sigma$ ensures that either
  $\indi(\sigma)=\indi(\Lambda)$ or
  $\indi(\sigma)=\indi(\Lambda)+1$.

  Reciprocally, let us assume that $\Lambda$ is a
  compact attracting set with isolating neighborhood
  $U$ such that: the linear Poincar\'e flow over any
  compact subset $\Gamma\subset\Lambda^*_Y(U)$ is
  hyperbolic with constant index $\indi(\Lambda)$, for
  all $Y$ in a $C^1$ neighborhood $\U$ of $X$; and that
  the singularities $\sigma$ in $U$ for each $Y\in\U$
  are sectionally hyperbolic with index
  $\indi(\sigma)\ge\indi(\Lambda)$.  In particular, the
  index of all periodic orbits of $U$ for $Y\in\U$ is
  constant $\indi(\Lambda)$, and the flows in $\U$ are
  homogeneous. Hence, every periodic point $p$ in $U$
  for $Y$ is hyperbolic with uniform bounds on the
  expansion and contraction on the period and,
  moreover, admits a sectional-hyperbolic splitting
  $T_pM = {E}_p^{Y,s} \oplus E^{Y,c}_p$ of the
  tangent bundle with constant index $\indi(\Lambda)$
  and with angle between the stable and central
  directions uniformly bounded away from zero;
  see~\cite[Section 5.4.1]{AraPac2010}.

  This is enough to deduce that the tangent bundle on
  $\Lambda_Y(U)\cap\Omega(X)$ admits a partially
  hyperbolic splitting $E^s\oplus E^c$ with index $\dim
  E^s=\indi(\Lambda)$. Indeed, for a non-singular
  $x\in\Lambda_Y(U)$ we can use Shub's Closing Lemma to
  obtain sequences $Y^k\in\U$ and $p_k\in U$ a periodic
  point for $Y^k$ such that $Y^k\xrightarrow[]{C^1}Y$
  and $p_k\to x$. We then define $ E^{s,Y}_x = \lim_{k
    \rightarrow\infty}E^{s,Y^{k}}_{p_k}$ and
  $E^{c,Y}_x= \lim_{k \rightarrow
    \infty}E^{c,Y^{k}}_{p_{k}}.$ This decomposition
  will be $DY_t$-invariant and partially hyperbolic by
  construction.  Moreover the assumption on the index
  of the singularities ensures that the partial
  hyperbolic splitting on every periodic orbit for each
  flow $Y\in\U$ can be extended to a partially
  hyperbolic splitting on the entire $\Lambda$,
  including the singularities; see~\cite[Section
  5.4.2]{AraPac2010}. Having these properties robustly
  on $\U$ with sectional hyperbolicity on periodic
  orbits implies that the subbundle $E^c$ is
  sectionally expanding, for $Y\in\U$;
  see~\cite[Section 5.4.3]{AraPac2010}.  This completes
  the proof.
\end{proof}

Now Proposition~\ref{pr:J-monot-hyperb} shows that strict
$\J$-monotonicity for the linear Poincar\'e flow over a
compact invariant subset implies hyperbolicity. Together
with Theorem~\ref{thm:2secexp-LPFhyp} we conclude the proof
of sufficiency in Theorem~\ref{mthm:2-sec-exp-J-monot}.

This completes the proof that strict $J$-monotonicity of the
linear Poincar\'e flow implies sectional hyperbolicity,
which is half of the statement of
Theorem~\ref{mthm:2-sec-exp-J-monot}.

\subsection{ Strict $\J$-monotonicity for
  the linear Poincar\'e flow}
\label{sec:hyperb-linear-poinca}

Now we prove that having a sectional hyperbolic splitting
implies that there exists a field of non-degenerate and
indefinite quadratic forms $\J$ with constant index equal to the
dimension of the contracting direction, such that the linear
Poincar\'e flow is strictly $\J_0$-monotonous on every
compact invariant set without singularities, for some
compatible field of forms $\J_0$, completing the proof of
Theorem~\ref{mthm:2-sec-exp-J-monot}.

Let $\Lambda=\Lambda_X(U)$ be a compact maximal invariant
set admitting a sectional hyperbolic splitting $T_\Lambda
M=E^s_\Lambda \oplus E^c_\Lambda$. As noted in the first
part of the proof of Theorem~\ref{thm:2secexp-LPFhyp}, the
existence of sectional hyperbolic splitting is a robust
property: there exists a neighborhood $\U$ of $X$ in the
$C^1$ topology in $\Mundo$ such that all $Y\in\U$ have a
maximal invariant subset $\Lambda_Y(U)$ which is also
sectional hyperbolic. Hence the results we obtain below hold
robustly in a neighborhood of $X$.

We have already shown, in
Section~\ref{sec:partial-hyperb-impli}, how to construct a
field of quadratic forms $\J$ such that $X$ is strictly
$\J$-separated on a neighborhood $V\subset U$ of $\Lambda$
satisfying for some $\lambda>0$ and all $x\in\Lambda$ and
$t>0$
\begin{align*}
  |DX_tv^+|=\sqrt{ \J(DX_tv^+)}\ge e^{\lambda t}
  \sqrt{\J(DX_tv^-)} = e^{\lambda t}|DX_tv^-|, \quad v^-\in
  E^s_x, v^+\in E^c_x, |v^\pm|=1.
\end{align*}
The results in \cite{Goum07} extend the properties of
adapted metrics to partial hyperbolic splittings, in such a
way that we can also obtain for all $t>0$
\begin{align*}
  |DX_tv^-|&\le e^{-\lambda t}, \quad v^-\in E^s_x, |v^-|=1.
\end{align*}
On $\Lambda^*=\Lambda\setminus \sing(X)$ we define
$N^s=\prod\cdot E^s$ and $N^u=\prod\cdot E^c$, where $\prod$
is the projection of the tangent bundle onto the
pseudo-orthogonal complement $N$ of $X$ with respect to
$\J$. We note that since $P^t=\prod\cdot DX_t$
\begin{align}
  |P^t\mid_{N^s_x}|\cdot|P^{-t}\mid_{N^u_{X_t(x)}}|
  &\le
  |DX_t\mid_{E^s_x}|\cdot|DX_{-t}\mid_{E^c_{X_t(x)}}|
  \le
  e^{-\lambda t},
  \quad\text{and} \label{eq:FLP-domin}
  \\
  |P^t\mid_{N^s_x}|
  &\le \label{eq:FLPs-contr}
  |DX_t\mid_{E^s_x}|
  \le  e^{-\lambda t},\quad
  x\in\Lambda^*, t>0
\end{align}
so that the linear Poincar\'e flow has a partially hyperbolic
splitting over $\Lambda^*$.

The assumption of sectional expansion ensures that, if we
fix any unit vector $v\in N^u_x$ for $x\in\Lambda^*$, then
for some $C, \lambda>0$ and every $t>0$
\begin{align*}
  Ce^{\lambda t}&\le
  |\det  DX_t\mid_{\gen\{X(x),v\}}|
  =
  \frac{\vol(DX_t v, X(X_t(x)))}{\vol(X(x),v)}
  \\
  &=
  \frac{|X(X_t(x))|}{|X(x)|}|DX_tv|\sin\angle(DX_tv,X(X_t(x)))
  =
  \frac{|X(X_t(x))|}{|X(x)|}|P^tv|.
\end{align*}
Since we are in a compact set we have
$0<c_0=\sup_{z\in\Lambda}|X(z)|<\infty$ and so
\begin{align}\label{eq:FLPc-exp}
  |P^tv|\ge\frac{C|X(x)|}{c_0} e^{\lambda t}, \quad
  x\in\Lambda^*, v\in N^u_x, |v|=1, t>0.
\end{align}
We write $c(x):=C|X(x)|/c_0$ and note that $0<c(x)\le1$ by
letting $t\to0$ in the above inequality.

We restrict now to the case of a compact invariant subset
$\Gamma$ of $\Lambda^*$. In this case $c(x)\ge c_1>0$ for
all $x\in\Gamma$ and $N^s_\Gamma\oplus N^u_\Gamma$ is a
uniformly hyperbolic splitting for $P^t$. We can then obtain
an adapted Riemannian metric for this splitting
following~\cite{Goum07} and define a field $\J_0$ of
quadratic forms using this adapted metric as in
Section~\ref{sec:constr-field-quadrat}. With respect to the
adapted metric we obtain both~(\ref{eq:FLP-domin})
and~(\ref{eq:FLPs-contr}), and also~(\ref{eq:FLPc-exp}) but
with unit constants multiplying the exponential.
From Remark~\ref{rmk:equivalent-qforms}, since $\J$ and
$\J_0$ have the same signs on the $E^s_\Gamma$ and
$E^c_\Gamma$, then $\J_0\sim\J$.

We show that $P^t$ is strictly $\J_0$-monotonous.  We
consider a vector $v\in N_x$ with $v=v^-+v^+, v^-\in N_x^s,
v^+\in N^u_x$ and $\J_0(v^+)-\J_0(v^-)=|v^+|^2+|v^-|^2=1$
for $x\in\Gamma$, and the norm of its image under the linear
Poincar\'e flow. Since $P^tv=P^tv^++P^tv^-$ and $N^s$ and
$N^u$ are $P^t$-invariant, if $v^+\neq\vec0$ we get for
$t>0$
\begin{align*}
  \J_0(P^tv)
  &=
  \J_0(P^tv^+) +\J_0(P^tv^-)
  =
  \J_0(P^tv^+)\cdot
  \left(1+\frac{\J_0(P^tv^-)}{\J_0(P^tv^+)}\right)
  \\
  &\ge
  e^{2\lambda t}\J_0(v^+)\left(1+e^{-2\lambda
          t}\frac{\J_0(v^-)}{\J_0(v^+)}\right),
\end{align*}
since $\J(P^tv^-)<0$. We note that the value of the left
hand side and the right hand side above are the same at
$t=0$. Moreover the derivative of the right hand side with
respect to $t$ at $t=0$ equals
\begin{align*}
  \left.\left[
  2\lambda e^{2\lambda t}\J_0(v^+)\left(1+e^{-2\lambda
          t}\frac{\J_0(v^-)}{\J_0(v^+)}\right)
      - e^{2\lambda t}\J_0(v^+)\cdot
      2\lambda e^{-2\lambda t}\frac{\J_0(v^-)}{\J_0(v^+)}
    \right]\right|_{t=0}
  &=
  2\lambda\J_0(v^+)>0.
\end{align*}
Hence we conclude that $
  \partial_t\J_0(P^tv)\mid_{t=0}
  \ge 2\lambda\J_0(v^+)>0
$
when $v$ has a non-zero positive component. In the remaining
case, $v=v^-$ we obtain (again because $\J_0(P^tv^-)<0$)
\begin{align*}
  \J_0(P^tv^-)\ge e^{-2\lambda t}\J_0(v^-)
\end{align*}
with the same value at $t=0$ on both sides, so that
$  \partial_t\J_0(P^tv^-)\mid_{t=0} \ge
  -2\lambda\J_0(v^-)>0$
also in this case.

We have proved that for all vector fields $Y$ sufficiently
$C^1$ close to $X$ and for every compact invariant subset
$\Gamma$ of $\Lambda_Y(U)^*$, we can find a field $\J_0$ of
quadratic forms compatible to $\J$ over $\Gamma$ such that
$P^t_Y$ is strictly $\J_0$-monotonous, as claimed in
Theorem~\ref{mthm:2-sec-exp-J-monot}.

This together with Theorem~\ref{thm:2secexp-LPFhyp}
completes the proof of Theorem~\ref{mthm:2-sec-exp-J-monot}.


\subsection{Criteria for $\J$-monotonicity of the linear
  Poincar\'e flow}
\label{sec:criter-j-monoton}

Here we prove Proposition~\ref{pr:P-J-monotonous}.  We have
already characterized partial hyperbolicity using the notion
of $\J$-separation, or the existence of infinitesimal
Lyapunov functions, which depend only on the vector field
$X$ and its derivative $DX$. To present a characterization
of sectional hyperbolicity along the same lines, we must use
the conclusion of Theorem \ref{mthm:2-sec-exp-J-monot}, and
obtain a criterion for the linear Poincar\'e flow to be
$\J$-monotonous.

The condition of $\J$-monotonicity for the linear Poincar\'e
flow can be expressed using only the vector field $X$ and
its space derivative $DX$.

Recall that a self-adjoint operator is said to be (positive)
non-negative if all eigenvalues are (positive) non-negative.

\begin{lemma}
  \label{le:P-J-monotonous}
  Let $X$ be a $\J$-non-negative (positive) vector field on
  $U$.  Then, the Linear Poincar\'e Flow is (strictly)
  $\J$-monotone if, and only if, the operator
  $$
  \hat J_x:= DX(x)^*\cdot \Pi_x^* J \Pi_x + \Pi_x^* J \Pi_x\cdot DX(x)
  $$
  is a non-negative (positive) self-adjoint operator.
\end{lemma}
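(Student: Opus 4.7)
The plan is to explicitly compute $\partial_t \J(P^t v)\mid_{t=0}$ and identify it with $\langle \hat J_x v, v\rangle$ via an algebraic trick that bypasses differentiating the projection $\Pi$.

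First I would reduce the statement to $v \in N_x$. Since $DX_t\cdot X(x) = X(X_t(x))$ lies in the kernel of $\Pi_{X_t(x)}$, we have $P^t v = P^t \Pi_x v$ for every $v \in T_xM$, so $\J(P^t v)$ depends only on the component $\Pi_x v \in N_x$. Thus the (strict) $\J$-monotonicity condition reduces to a condition on $N_x$. Fixing $v \in N_x$, I note two useful identities: $\Pi_x v = v$, and $\Pi_x^* J v = J v$ as a functional on $T_xM$, since for every $u$ one has $\langle \Pi_x^* Jv,u\rangle = \J(v,\Pi_x u) = \J(v,u)$ (using $\J(v,X(x))=0$).

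The key computation is based on the explicit formula $\Pi_y w = w - \tfrac{\J(X(y),w)}{\J(X(y),X(y))}X(y)$. Expanding $\J(P^t v) = \J\bigl(\Pi_{X_t(x)} DX_t v\bigr)$ and simplifying yields
\begin{align*}
  \J(P^t v) \;=\; \J(DX_t v) \;-\; \frac{\phi(t)^2}{\psi(t)},
  \qquad
  \phi(t) := \J\bigl(X(X_t(x)), DX_t v\bigr), \quad \psi(t) := \J\bigl(X(X_t(x))\bigr).
\end{align*}
For $v \in N_x$ we have $\phi(0) = \J(X(x),v) = 0$, so the derivative of the quotient $\phi^2/\psi$ vanishes at $t=0$. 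Theorem \ref{thm:J-separated-tildeJ}(1) applied to the derivative cocycle gives $\partial_t \J(DX_t v)\mid_{t=0} = \langle \tilde J_x v, v\rangle$ with $\tilde J_x = J\cdot DX(x) + DX(x)^*\cdot J$. A short identification, using $\Pi_x v = v$ and $\Pi_x^* Jv = Jv$, yields $\langle \tilde J_x v, v\rangle = 2\J(DX(x)v,v) = 2\J(\Pi_x DX(x)v,v) = \langle \hat J_x v, v\rangle$ for $v \in N_x$.

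Combining these steps produces the pointwise equality $\partial_t \J(P^t v)\mid_{t=0} = \langle \hat J_x v, v\rangle$ for every $v \in N_x$, from which both directions of the equivalence follow immediately: $P^t$ is (strictly) $\J$-monotone on $U$ iff $\langle \hat J_x v, v\rangle \geq 0$ (resp.\ $>0$) for every nonzero $v \in N_x$, i.e.\ $\hat J_x$ is a non-negative (resp.\ positive) self-adjoint quadratic form, which is the stated condition. The main obstacle is the derivative of $\Pi_{X_t(x)}$: a naive expansion would produce terms involving $\partial_t \Pi_{X_t(x)}$, and although these can be shown to be $\J$-orthogonal to $v \in N_x$ (because $\partial_t \Pi_{X_t(x)} v\mid_{t=0}$ turns out to be a multiple of $X(x)$), the cleanest route is to push the $t$-dependence of $\Pi$ into the scalar quotient $\phi^2/\psi$ and observe that $\phi(0)=0$ kills that contribution without any further computation.
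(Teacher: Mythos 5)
Your proposal is correct and arrives at exactly the key identity on which the paper's proof rests, namely $\partial_t\J(P^{\,t}v)\mid_{t=0}=\langle \hat J_x v,v\rangle$ for $v\in N_x$ (the paper's \eqref{eq:LPF-J-derivative}), but by a genuinely cleaner computational route. The paper differentiates the projected vector $\Pi_{X_t(x)}DX_tv$ head-on, which forces it to compute $\partial_t\big(X(X_t(x))/|X(X_t(x))|\big)$ and then argue that all terms along the flow direction pair to zero against vectors of $N_{X_t(x)}$; you instead absorb the entire $t$-dependence of the projection into the scalar correction $\J(P^{\,t}v)=\J(DX_tv)-\phi(t)^2/\psi(t)$ with $\phi(t)=\langle JX(X_t(x)),DX_tv\rangle$ and $\psi(t)=\J(X(X_t(x)))$, note that $\phi(0)=\langle JX(x),v\rangle=0$ for $v\in N_x$ kills the derivative of that correction at $t=0$ (using $\psi(0)=\J(X(x))\neq0$, the same non-degeneracy of $E^X_x$ the paper needs to normalize $\hat X$), and then reuse Theorem~\ref{thm:J-separated-tildeJ}(1) for the tangent cocycle together with $\Pi_xv=v$ and $\Pi_x^*Jv=Jv$ to convert $\langle\tilde J_xv,v\rangle$ into $\langle\hat J_xv,v\rangle$. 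This buys a much shorter argument and recycles a result already proved, with no loss of content. Two points worth making explicit in a write-up: positivity of $\hat J_x$ must be read on $N_x$, since its quadratic form vanishes on $\RR\cdot X(x)$ (this is precisely how the paper phrases its conclusion); and if one wants the integrated statement $\J(P^{\,t}v)\ge\J(v)$ for all $t\ge0$, rather than only the derivative condition at $t=0$ that serves as the definition, one adds the paper's one-line flow-property remark that $\partial_s\J(P^{\,s}v)=\langle\hat J_{X_s(x)}P^{\,s}v,P^{\,s}v\rangle$ because $P^{\,s}v\in N_{X_s(x)}$ and $U$ is forward invariant.
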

Here, we consider $\Pi^*$ as the adjoint operator of the
orthogonal projection $\Pi$ in the definition of the linear
Poincar\'e flow.

The conditions above are again consequence of the corresponding
results for linear multiplicative cocycles over flows, as
explained in Section~\ref{sec:approach-via-linear}.

\begin{proof}
  We shall prove only the positive case, once the
  non-negative is similar.

  We denote by $|v|:=\langle J v,v\rangle^{1/2}$ the
  $\J$-norm of a vector $v$ and observe that we can write
  \begin{align*}
    \Pi_{X_t(x)} v:= v-\langle J
    v,\frac{X(X_t(x))}{|X(X_t(x))|} \rangle
    \frac{X(X_t(x))}{|X(X_t(x))|},
  \end{align*}
  for all $v\in T_xM$ with $ X(X_t(x))\neq\vec0$ and $t\ge0$.
  Then, to conclude that $\J(P^tv)>\J(v)$ for every $v\in
  N_x$, $x\in U, X(x)\neq\vec0$ it is enough to prove
  \begin{align}\label{eq:strict-Jmonotonous}
    \partial_t \J(P^tv)>0 \quad\text{for every}\quad v\in
    T_xM,\, X(X_t(x))\neq\vec0\qand t\ge0.
  \end{align}
  Reciprocally, if we have that $\J(P^tv)>\J(v)$ for every
  $v\in N_x$, $x\in U, X(x)\neq\vec0$, then we also must
  have (\ref{eq:strict-Jmonotonous}).

  Now the above derivative can be written, just like in the
  previous sections
  \begin{align}\label{eq:Jderivative1}
    \langle J \cdot
    \Pi_{X_t(x)}DX_tv,\partial_t(\Pi_{X_t(x)}DX_tv)\rangle +
    \langle J\cdot \partial_t(\Pi_{X_t(x)}DX_tv),
    \Pi_{X_t(x)}DX_tv \rangle.
  \end{align}
  To expand the above expression, we note that
  \begin{align*}
    \partial_t(\Pi_{X_t(x)}DX_tv) &=
    \partial_t\left(DX_tv - \langle J \cdot DX_tv,
      \frac{X(X_t(x))}{|X(X_t(x))|}\rangle
      \frac{X(X_t(x))}{|X(X_t(x))|} \right)
  \end{align*}
  can be written in the following way,
  \begin{align*}
    DX&(X_t(x))DX_tv + \langle J \cdot DX_tv,
    \frac{X(X_t(x))}{|X(X_t(x))|}\rangle\cdot
    \partial_t \frac{X(X_t(x))}{|X(X_t(x))|}
    \\
    &-\left( \langle J \cdot DX(X_t(x))DX_tv,
      \frac{X(X_t(x))}{|X(X_t(x))|}\rangle + \langle J \cdot
      DX_tv,
      \partial_t \frac{X(X_t(x))}{|X(X_t(x))|}\rangle
    \right) \frac{X(X_t(x))}{|X(X_t(x))|}.
  \end{align*}
  Since $\partial_t \frac{X(X_t(x))}{|X(X_t(x))|}$ equals
  \begin{align*}
     -\langle
    \frac{X(X_t(x))}{|X(X_t(x))|} ,
    DX(X_t(x))\frac{X(X_t(x))}{|X(X_t(x))|} \rangle \cdot
    \frac{X(X_t(x))}{|X(X_t(x))|} +
    DX(X_t(x))\frac{X(X_t(x))}{|X(X_t(x))|}
  \end{align*}
  and must be $\J$-orthogonal to the flow direction at
  $X_t(x)$, then this last expression is the projection on
  $N_{X_t(x)}$ as follows
  \begin{align*}
    \partial_t \frac{X(X_t(x))}{|X(X_t(x))|} = \left(
      \Pi_{X_t(x)}
      DX(X_t(x))\right)\cdot\frac{X(X_t(x))}{|X(X_t(x))|}
  \end{align*}
  Now replacing $X_t(x)$ by $z$ throughout and the vector
  $X(z)$ $\J$-normalized by $\hat X(z)$ we obtain the following
  expression for the derivative of $P^tv$
  \begin{align*}
    DX(z)&DX_tv - \langle J \cdot DX_tv, \hat X(z)\rangle
    \cdot \Pi_zDX(z)\hat X(z)
    \\
    &-\left( \langle J \cdot DX(z)DX_tv, \hat X(z)\rangle +
      \langle J \cdot DX_tv, \Pi_zDX(z)\hat X(z) \rangle
    \right) \hat X(z)
  \end{align*}
  or, easier for a geometrical interpretation
  \begin{align*}
    DX(z)&DX_tv - \langle J \cdot DX(z)DX_tv, \hat
    X(z)\rangle \hat X(z)
    \\
    &- \langle J \cdot DX_tv, \hat X(z)\rangle \cdot
    \Pi_zDX(z)\hat X(z) - \langle J \cdot DX_tv,
    \Pi_zDX(z)\hat X(z) \rangle \hat X(z).
  \end{align*}
  We observe that the first line above is the projection on
  $N_z$ of $DX(z)DX_tv$ so we have that $\partial_tP^{\,
    t}v$ equals
  \begin{align*}
     \Pi_zDX(z)DX_tv - \langle J \cdot
    DX_tv, \hat X(z)\rangle \Pi_zDX(z)\hat X(z) - \langle J
    \cdot DX_tv, \Pi_zDX(z)\hat X(z) \rangle \hat X(z).
  \end{align*}
  In the expression~\eqref{eq:Jderivative1}, we take the
  $\J$-(inner)-product with a vector on $N_z$, so the $\hat
  X$ component above contributes nothing to the final
  result. Therefore \eqref{eq:Jderivative1} becomes
  \begin{align*}
    \langle J &\cdot \Pi_{z}DX_tv , \Pi_zDX(z)DX_tv \rangle
    + \langle J \cdot \Pi_zDX(z)DX_tv , \Pi_{z}DX_tv \rangle
    \\
    &-\langle J \cdot DX_tv, \hat X(z)\rangle \big( \langle
    J \cdot \Pi_{z}DX_tv , \Pi_zDX(z)\hat X(z) \rangle +
    \langle J \cdot \Pi_zDX(z)\hat X(z) , \Pi_{z}DX_tv
    \rangle \big)
  \end{align*}
  and using the adjoint of $DX(z)=DX(X_t(x))$ we define
  $\hat J = \hat J_x := \big(\Pi_x DX(x)\big)^* J \Pi_x +
  \Pi_x^* J \Pi_x DX(x)$ and obtain
  \begin{align*}
    \partial_tP^{\, t}v = \langle \hat J_{X_t(x)} DX_tv,
    DX_tv\rangle - \langle J \cdot DX_tv, \hat
    X(X_t(x))\rangle\cdot \langle \hat J \cdot DX_tv ,\hat
    X(X_t(x)) \rangle.
  \end{align*}
  Letting $t=0$, since $ \langle J \cdot v, \hat
  X(x)\rangle=0$ for $v\in N_x$, we arrive at
  \begin{align}
    \label{eq:LPF-J-derivative}
    \partial_t\big(P^{\, t}v\big)\mid_{t=0} = \langle \hat
    J_{x} v, v\rangle - \langle J \cdot v, \hat
    X(x)\rangle\cdot \langle \hat J \cdot v ,\hat X(x)
    \rangle = \langle \hat J_{x} v, v\rangle.
  \end{align}
  We conclude that condition \eqref{eq:strict-Jmonotonous}
  is equivalent to
  \begin{align}
    \label{eq:strict-J-monotonous-DX}
    \langle \hat J_{X_t(x)} v, v\rangle >0, \quad v\in
    N_{X_t(x)},
  \end{align}
  that is, \emph{$\hat J_x$ is a positive definite
    self-adjoint operator on $N_x$ for each $x\in U$ with
    $X(x)\neq\vec0$.}  Indeed, by the flow property of
  $P^{\,t}$ we have, for all $s>0$
  \begin{align*}
    \partial_t\J(P^{\,t+s}v)\mid_{t=0}=
    \partial_t\J\big(P^{\,t}(P^{\,s}v)\big)\mid_{t=0}>0
    \quad\text{because}\quad P^{\,s}v\in N_{X^s(x)}
  \end{align*}
  and $P^{\, s}:N_x\to N_{X^s(x)}$ is an isomorphism.
\end{proof}

So \eqref{eq:strict-J-monotonous-DX} is the condition that
the vector field and its derivative must satisfy in $U$,
except at singularities, so that the Linear Poincar\'e Flow
admits a hyperbolic splitting.

This concludes the proof of
Proposition~\ref{pr:P-J-monotonous}.

\def\cprime{$'$}

\bibliographystyle{abbrv}

\begin{thebibliography}{10}

\bibitem{Aleks68}
V.~M. Alekseev.
\newblock Quasirandom dynamical systems. {I}. {Q}uasirandom diffeomorphisms.
\newblock {\em Mat. Sb. (N.S.)}, 76 (118):72--134, 1968.

\bibitem{Aleks68a}
V.~M. Alekseev.
\newblock Quasirandom dynamical systems. {II}. {O}ne-dimensional nonlinear
  vibrations in a periodically perturbed field.
\newblock {\em Mat. Sb. (N.S.)}, 77 (119):545--601, 1968.

\bibitem{Aleks69}
V.~M. Alekseev.
\newblock Quasirandom dynamical systems. {III}. {Q}uasirandom vibrations of
  one-dimensional oscillators.
\newblock {\em Mat. Sb. (N.S.)}, 78 (120):3--50, 1969.

\bibitem{ACPP11}
V.~Araujo, A.~Castro, M.~J. Pacifico, and V.~Pinheiro.
\newblock Multidimensional {R}ovella-like attractors.
\newblock {\em J. Differential Equations}, 251(11):3163--3201, 2011.

\bibitem{AraPac2010}
V.~Ara{\'u}jo and M.~J. Pacifico.
\newblock {\em Three-dimensional flows}, volume~53 of {\em Ergebnisse der
  Mathematik und ihrer Grenzgebiete. 3. Folge. A Series of Modern Surveys in
  Mathematics [Results in Mathematics and Related Areas. 3rd Series. A Series
  of Modern Surveys in Mathematics]}.
\newblock Springer, Heidelberg, 2010.
\newblock With a foreword by Marcelo Viana.

\bibitem{BaDrVal12}
L.~Barreira, D.~Dragi{\v{c}}evi{\'c}, and C.~Valls.
\newblock Lyapunov functions and cone families.
\newblock {\em J. Stat. Phys.}, 148(1):137--163, 2012.

\bibitem{BDV2004}
C.~Bonatti, L.~J. D{\'i}az, and M.~Viana.
\newblock {\em {Dynamics beyond uniform hyperbolicity}}, volume {102} of {\em
  {Encyclopaedia of Mathematical Sciences}}.
\newblock {Springer-Verlag}, {Berlin}, {2005}.
\newblock {A global geometric and probabilistic perspective, Mathematical
  Physics, III}.

\bibitem{BPV97}
C.~Bonatti, A.~Pumari{\~n}o, and M.~Viana.
\newblock {Lorenz attractors with arbitrary expanding dimension}.
\newblock {\em {C. R. Acad. Sci. Paris S{\'e}r. I Math.}},
  {325}({8}):{883--888}, {1997}.

\bibitem{BoV00}
C.~Bonatti and M.~Viana.
\newblock {SRB measures for partially hyperbolic systems whose central
  direction is mostly contracting}.
\newblock {\em {Israel J. Math.}}, {115}:{157--193}, {2000}.

\bibitem{Bo75}
R.~Bowen.
\newblock {\em {Equilibrium states and the ergodic theory of Anosov
  diffeomorphisms}}, volume {470} of {\em {Lect. Notes in Math.}}
\newblock {Springer Verlag}, {1975}.

\bibitem{BR75}
R.~Bowen and D.~Ruelle.
\newblock {The ergodic theory of Axiom A flows}.
\newblock {\em {Invent. Math.}}, {29}:{181--202}, {1975}.

\bibitem{Do87}
C.~I. Doering.
\newblock {Persistently transitive vector fields on three-dimensional
  manifolds}.
\newblock In {\em {Procs. on Dynamical Systems and Bifurcation Theory}}, volume
  {160}, pages {59--89}. {Pitman}, {1987}.

\bibitem{Goum07}
N.~Gourmelon.
\newblock Adapted metrics for dominated splittings.
\newblock {\em Ergodic Theory Dynam. Systems}, 27(6):1839--1849, 2007.

\bibitem{HPS77}
M.~Hirsch, C.~Pugh, and M.~Shub.
\newblock {\em {Invariant manifolds}}, volume {583} of {\em {Lect. Notes in
  Math.}}
\newblock {Springer Verlag}, {New York}, {1977}.

\bibitem{HuntMack03}
T.~J. Hunt and R.~S. MacKay.
\newblock Anosov parameter values for the triple linkage and a physical system
  with a uniformly chaotic attractor.
\newblock {\em Nonlinearity}, 16(4):1499, 2003.

\bibitem{Kato95}
T.~Kato.
\newblock {\em Perturbation theory for linear operators}.
\newblock Classics in Mathematics. Springer-Verlag, Berlin, 1995.
\newblock Reprint of the 1980 edition.

\bibitem{BurnKatok94}
A.~Katok.
\newblock Infinitesimal {L}yapunov functions, invariant cone families and
  stochastic properties of smooth dynamical systems.
\newblock {\em Ergodic Theory Dynam. Systems}, 14(4):757--785, 1994.
\newblock With the collaboration of Keith Burns.

\bibitem{KH95}
A.~Katok and B.~Hasselblatt.
\newblock {\em {Introduction to the modern theory of dynamical systems}},
  volume~{54} of {\em {Encyclopeadia Appl. Math.}}
\newblock {Cambridge University Press}, {Cambridge}, {1995}.

\bibitem{lewow80}
J.~Lewowicz.
\newblock Lyapunov functions and topological stability.
\newblock {\em J. Differential Equations}, 38(2):192--209, 1980.

\bibitem{lewow89}
J.~Lewowicz.
\newblock Expansive homeomorphisms of surfaces.
\newblock {\em Bol. Soc. Brasil. Mat. (N.S.)}, 20(1):113--133, 1989.

\bibitem{LGW05}
M.~Li, S.~Gan, and L.~Wen.
\newblock {Robustly transitive singular sets via approach of an extended linear
  Poincar{\'e} flow}.
\newblock {\em {Discrete Contin. Dyn. Syst.}}, {13}({2}):{239--269}, {2005}.

\bibitem{Lo63}
E.~N. Lorenz.
\newblock {Deterministic nonperiodic flow}.
\newblock {\em {J. Atmosph. Sci.}}, {20}:{130--141}, {1963}.

\bibitem{Maltsev63}
A.~I. Mal{\cprime}cev.
\newblock {\em Foundations of linear algebra}.
\newblock Translated from the Russian by Thomas Craig Brown; edited by J. B.
  Roberts. W. H. Freeman \& Co., San Francisco, Calif.-London, 1963.

\bibitem{Man82}
R.~Ma{\~n}{\'e}.
\newblock {An ergodic closing lemma}.
\newblock {\em {Annals of Math.}}, {116}:{503--540}, {1982}.

\bibitem{Mrkr88}
R.~Markarian.
\newblock Billiards with {P}esin region of measure one.
\newblock {\em Comm. Math. Phys.}, 118(1):87--97, 1988.

\bibitem{MeMor06}
R.~Metzger and C.~Morales.
\newblock Sectional-hyperbolic systems.
\newblock {\em Ergodic Theory and Dynamical System}, 28:1587--1597, 2008.

\bibitem{MetzMor06}
R.~J. Metzger and C.~A. Morales.
\newblock {The Rovella attractor is a homoclinic class}.
\newblock {\em {Bull. Braz. Math. Soc. (N.S.)}}, {37}({1}):{89--101}, {2006}.

\bibitem{Morales07}
C.~A. Morales.
\newblock {Examples of singular-hyperbolic attracting sets}.
\newblock {\em {Dynamical Systems, An International Journal}},
  {22}({3}):{339{--}349}, {2007}.

\bibitem{MPP99}
C.~A. Morales, M.~J. Pacifico, and E.~R. Pujals.
\newblock Singular hyperbolic systems.
\newblock {\em Proc. Amer. Math. Soc.}, 127(11):3393--3401, 1999.

\bibitem{MPP04}
C.~A. Morales, M.~J. Pacifico, and E.~R. Pujals.
\newblock {Robust transitive singular sets for 3-flows are partially hyperbolic
  attractors or repellers}.
\newblock {\em {Ann. of Math. (2)}}, {160}({2}):{375--432}, {2004}.

\bibitem{Newhouse2004}
S.~Newhouse.
\newblock Cone-fields, domination, and hyperbolicity.
\newblock In {\em Modern dynamical systems and applications}, pages 419--432.
  Cambridge Univ. Press, Cambridge, 2004.

\bibitem{PM82}
J.~Palis and W.~{de Melo}.
\newblock {\em {Geometric Theory of Dynamical Systems}}.
\newblock {Springer Verlag}, {1982}.

\bibitem{Postnikov2}
M.~Postnikov.
\newblock {\em Linear Algebra and Differential Geometry}.
\newblock Mir Publishers, Moscow, 1983.

\bibitem{Pota60}
V.~P. Potapov.
\newblock The multiplicative structure of {$J$}-contractive matrix functions.
\newblock {\em Amer. Math. Soc. Transl. (2)}, 15:131--243, 1960.
\newblock Translation of Trudy Moskovskogo Matemati\v ceskogo Ob\v s\v cestva 4
  (1955), 125--236.

\bibitem{Pota79}
V.~P. Potapov.
\newblock Linear-fractional transformations of matrices.
\newblock In {\em Studies in the theory of operators and their applications
  ({R}ussian)}, pages 75--97, 177. ``Naukova Dumka'', Kiev, 1979.

\bibitem{Rellich69}
F.~Rellich.
\newblock {\em Perturbation theory of eigenvalue problems}.
\newblock Assisted by J. Berkowitz. With a preface by Jacob T. Schwartz. Gordon
  and Breach Science Publishers, New York-London-Paris, 1969.

\bibitem{robinson1999}
C.~Robinson.
\newblock {\em {Dynamical systems}}.
\newblock {Studies in Advanced Mathematics}. {CRC Press}, {Boca Raton, FL},
  {Second} edition, {1999}.
\newblock {Stability, symbolic dynamics, and chaos}.

\bibitem{robinson2004}
C.~Robinson.
\newblock {\em {An introduction to dynamical systems: continuous and
  discrete}}.
\newblock {Pearson Prentice Hall, Upper Saddle River, NJ}, {2004}.

\bibitem{Ro93}
A.~Rovella.
\newblock {The dynamics of perturbations of the contracting Lorenz attractor}.
\newblock {\em {Bull. Braz. Math. Soc.}}, {24}({2}):{233--259}, {1993}.

\bibitem{luciana-tese}
L.~Salgado.
\newblock {\em Sobre hiperbolicidade fraca para fluxos singulares}.
\newblock PhD thesis, UFRJ, Rio de Janeiro, {2012}.

\bibitem{ShafRemiz}
I.~Shafarevich and A.~Remizov.
\newblock {\em Linear Algebra And Geometry}.
\newblock Springer-Verlag, Heidelberg, 1st edition, 2013.

\bibitem{Sh87}
M.~Shub.
\newblock {\em {Global stability of dynamical systems}}.
\newblock {Springer Verlag}, {1987}.

\bibitem{Si68}
Y.~Sinai.
\newblock {Markov partitions and C-diffeomorphisms}.
\newblock {\em {Func. Anal. and Appl.}}, {2}:{64--89}, {1968}.

\bibitem{Sm67}
S.~Smale.
\newblock {Differentiable dynamical systems}.
\newblock {\em {Bull. Am. Math. Soc.}}, {73}:{747--817}, {1967}.

\bibitem{strogatz94}
S.~Strogatz.
\newblock {\em Nonlinear Dynamics And Chaos: With Applications To Physics,
  Biology, Chemistry, And Engineering}.
\newblock Studies in nonlinearity. Perseus Books, Reading, Massachusetts, 1st
  edition, 1994.

\bibitem{Tu99}
W.~Tucker.
\newblock {The Lorenz attractor exists}.
\newblock {\em {C. R. Acad. Sci. Paris}}, {328, S{\'e}rie I}:{1197--1202},
  {1999}.

\bibitem{Tu2}
W.~Tucker.
\newblock {A rigorous ODE solver and Smale{'}s 14th problem.}
\newblock {\em {Found. Comput. Math.}}, {2}({1}):{53--117}, {2002}.

\bibitem{viana2000i}
M.~Viana.
\newblock {What{'}s new on Lorenz strange attractor}.
\newblock {\em {Mathematical Intelligencer}}, {22}({3}):{6--19}, {2000}.

\bibitem{Viv03}
T.~Vivier.
\newblock {Flots robustement transitifs sur les vari{\'e}t{\'e}s compactes}.
\newblock {\em {C. R. Math. Acad. Sci. Paris}}, {337}({12}):{791--796}, {2003}.

\bibitem{Wojtk85}
M.~Wojtkowski.
\newblock Invariant families of cones and {L}yapunov exponents.
\newblock {\em Ergodic Theory Dynam. Systems}, 5(1):145--161, 1985.

\bibitem{Wojtk00}
M.~P. Wojtkowski.
\newblock Magnetic flows and {G}aussian thermostats on manifolds of negative
  curvature.
\newblock {\em Fund. Math.}, 163(2):177--191, 2000.

\bibitem{Wojtk00a}
M.~P. Wojtkowski.
\newblock {$W$}-flows on {W}eyl manifolds and {G}aussian thermostats.
\newblock {\em J. Math. Pures Appl. (9)}, 79(10):953--974, 2000.

\bibitem{Wojtk01}
M.~P. Wojtkowski.
\newblock Monotonicity, {$J$}-algebra of {P}otapov and {L}yapunov exponents.
\newblock In {\em Smooth ergodic theory and its applications ({S}eattle, {WA},
  1999)}, volume~69 of {\em Proc. Sympos. Pure Math.}, pages 499--521. Amer.
  Math. Soc., Providence, RI, 2001.

\bibitem{Wojtk09}
M.~P. Wojtkowski.
\newblock A simple proof of polar decomposition in pseudo-{E}uclidean geometry.
\newblock {\em Fund. Math.}, 206:299--306, 2009.

\bibitem{ZGW08}
S.~Zhu, S.~Gan, and L.~Wen.
\newblock Indices of singularities of robustly transitive sets.
\newblock {\em Discrete Contin. Dyn. Syst.}, 21(3):945--957, 2008.

\end{thebibliography}

\end{document}